\author{Akash Singha Roy}
\address{Department of Mathematics \\ University of Georgia \\ Athens, GA 30602}
\email{akash01s.roy@gmail.com}
\subjclass[2020]{Primary 11A25; Secondary 11N36, 11N37, 11N64, 11N69}
\renewcommand\phi\varphi
\renewcommand{\pod}[1]{\allowbreak\mathchoice
  {\if@display \mkern 18mu\else \mkern 8mu\fi (#1)}
  {\if@display \mkern 18mu\else \mkern 8mu\fi (#1)}
  {\mkern4mu(#1)}
  {\mkern4mu(#1)}
}
\DeclareMathAlphabet{\curly}{U}{rsfs}{m}{n}
\newcommand{\1}{\mathbbm{1}}
\newcommand{\F}{\mathbb{F}}
\newcommand\Z{\mathbb{Z}}
\newcommand\NatNos{\mathbb N}
\renewcommand\v{\mathbf{v}}
\newtheorem{thm}{Theorem}[section]
\newtheorem{prop}[thm]{Proposition}
\newtheorem{lem}[thm]{Lemma}
\theoremstyle{remark}
\newtheorem*{rmk}{Remark}
\newcommand\err{\mathcal E}
\newcommand\reals{\mathbb R}
\newcommand\complex{\mathbb C}
\newcommand\Udisk{\mathbb U}
\newcommand\sumxyzf{\sum\nolimits_{n \le x}^* f(n)}
\newcommand\expfpp{\exp\left(\sum_{p \le y} \frac{|f(p)|}p \right)}
\newcommand\Ree{\mathrm{Re}}
\newcommand\summ{\sum_{\substack{m \le x\\P^+(m) \le y}}}
\newcommand\sumj{\sum_{j \ge 1}}
\newcommand\sumjT{\sum_{j \ge 2}}
\newcommand\sumPTwoj{\sum_{\substack{P_2, \dots, P_j \in (y, P_1)\\P_2, \dots, P_j \text{ distinct}\\P_2 \cdots P_j \le x/mP_1}}}
\newcommand\sumPTwojMO{\sum_{\substack{P_2, \dots, P_{j-1} \in (y, P_1)\\P_2, \dots, P_{j-1} \text{ distinct}\\P_2 \cdots P_{j-1} \le x/myP_1}}}
\newcommand\summfm{\sum_{\substack{m \le x/z\\P^+(m) \le y}} f(m)}
\newcommand\bbm{\mathbbm 1}
\newcommand\alphatil{\widetilde{\alpha}}
\newcommand\alphatilq{\widetilde{\alpha}(q)}
\newcommand\chibara{\overline\chi(a)}
\newcommand\chisign{\chi(\sigma(n))}
\newcommand\phiqrec{\frac1{\phi(q)}}
\newcommand\phiellrec{\frac1{\phi(\ell^e)}}
\newcommand\elleell{\ell^{e_\ell}}
\newcommand\phielleellrec{\frac1{\phi(\ell^{e_\ell})}}
\newcommand\condofchi{\cond(\chi)}
\newcommand\condofchiell{\cond(\chi_\ell)}
\newcommand\cond{\mathfrak f}
\newcommand\chiZell{\chi_{0, \ell}}
\newcommand\chiZellv{\chiZell(v)}
\newcommand\chiellvvO{\chi_\ell(v^2+v+1)}
\newcommand\sumvmodelle{\sum_{v \bmod \ell^{e_\ell}}}
\newcommand\sumvmodell{\sum_{v \bmod \ell}}
\newcommand\omegaOnefchi{\omega_1(\condofchi)}
\newcommand\omegaTwofchi{\omega_2(\condofchi)}
\newcommand\Modetachialphatil{|\etachi|/\alphatil}
\newcommand\prodellq{\prod_{\ell \mid q}}
\newcommand\Vellew{\mathcal V_{\ell^e} (w)}
\newcommand\SigmatilOne{\widetilde \Sigma_1}
\newcommand\SigmatilTwo{\widetilde \Sigma_2}
\newcommand\Vqtil{\widetilde{\mathcal V}_q}
\newcommand\VellTwotilw{\widetilde{\mathcal V}_{\ell^2}}
\newcommand\NineSixInv{9 \cdot 16^{-1}}
\newcommand\VellTwotilwNineSixInv{\VellTwotilw(\NineSixInv) }
\newcommand\UellTwoCart{ U_{\ell^2} \times U_{\ell^2}}
\newcommand\Velltilw{\widetilde{\mathcal V}_\ell(w)}
\providecommand{\customgenericname}{}
\newcommand{\newcustomtheorem}[2]{
  \newenvironment{#1}[1]
  {
   \renewcommand\customgenericname{#2}
   \renewcommand\theinnercustomgeneric{##1}
   \innercustomgeneric
  }
  {\endinnercustomgeneric}
}
\newcommand\sm\setminus
\newcommand\sumvmodq{\sum_{v \bmod q}}
\newcommand\expOlogtwoqOOne{\exp\big(O\big((\log_2 (3q))^{O(1)}\big)\big)}
\newcommand\Fellbar{\overline{\F}_\ell}
\newcommand\mcS{\mathcal S}
\newcommand\etachi{\eta_\chi}
\newcommand\etachiell{\eta_{\chi, \ell}}
\title[Mean Values of Multiplicative Functions and distribution of $\sigma(n)$]{Mean values of multiplicative functions and applications to the distribution of the sum of divisors}
\date{}
\begin{document}

\begin{abstract}  We provide uniform bounds on mean values of multiplicative functions under very general hypotheses, detecting certain power savings missed in known results in the literature. As an application, we study the distribution of the sum-of-divisors function $\sigma(n)$ in coprime residue classes to moduli $q \le (\log x)^K$, 
obtaining extensions of results of \'Sliwa that are uniform in a wide range of $q$ and optimal in various parameters. As a consequence of our results, we obtain that the values of $\sigma(n)$ sampled over $n \le x$ with $\sigma(n)$ coprime to $q$ are asymptotically equidistributed among the coprime residue classes mod $q$, uniformly for odd $q \le (\log x)^K$. On the other hand, if $q$ is even, then equidistribution is restored provided we restrict to inputs $n$ having sufficiently many prime divisors exceeding $q$.
\end{abstract}

\keywords{multiplicative function, mean values, equidistribution, uniform distribution, weak equidistribution, weak uniform distribution, sum of divisors}
\maketitle
\section{Introduction}
Mean values of multiplicative functions have been a central topic of interest in analytic number theory. There is already a rich variety of literature on this subject, with the classical result of Hal\'asz \cite{halasz68} (see \cite[Corollary III.4.12]{tenenbaum15} for a precise version) providing a widely applicable general bound on the partial sums of any multiplicative function taking values in the (complex) unit disk, and with the analytic method of Landau--Selberg--Delange (see \cite[Chapter II.5]{tenenbaum15}) providing a highly precise asymptotic formula for the partial sums of multiplicative functions whose Dirichlet series behave like powers of the Riemann zeta function. However in applications, situations arise where Hal\'asz's bound turns out to not be precise enough, while the Landau--Selberg--Delange method despite being applicable leads to additional error terms that become too large to yield useful information. Such situations occur, for instance, while studying the distribution of values of multiplicative functions in residue classes. 

In an attempt to address this issue, in recent joint work with Pollack \cite{PSR23A}, we gave an upper bound on the partial sum of a multiplicative function taking values in the unit disk, under some natural control on the average behavior of this function at the primes. Using this result, we studied the distribution of the values of the Alladi-Erd\H{o}s function $A(n) \coloneqq \sum_{p^k \parallel n} k \cdot p$ and Euler's totient function $\phi(n) \coloneqq \#\{1 \le d \le n: ~ \gcd(d, n)=1 \}$ in residue classes to moduli varying uniformly in a wide range. Our results extended those of Goldfeld \cite{goldfeld17} and Narkiewicz \cite{narkiewicz66} (who studied the corresponding questions for fixed moduli) and had error terms that (for concrete heuristic reasons) could be expected to be essentially optimal. A related function occurring commonly in number theory is the sum-of-divisors function $\sigma(n) \coloneqq \sum_{d \mid n} d$, and \'Sliwa \cite{Sli73} studies the distribution of the values of this function among the coprime residue classes to a fixed modulus $q$, obtaining a necessary and sufficient criterion for $\sigma(n)$ to be equidistributed among the coprime residue classes mod $q$. Our methods in \cite{PSR23A}, however, are unable to obtain a complete uniform version of \'Sliwa's results, primarily because they are unable to detect the necessary power savings in certain naturally-arising character sums. Motivated by this problem, our first main result in this manuscript is one which provides a variant of Theorem 1.1 in \cite{PSR23A} that can detect certain power savings missed in the referred result, and in fact also enables us to obtain a uniform version of \'Sliwa's result.     

In what follows, $\pi(y)$ denotes (as usual) the number of primes up to $y$, and $\Udisk$ denotes the unit disk in the complex plane, namely, the set $\{s \in \complex: |s| \le 1\}$. For a given real number $z$, we say that a positive integer $n$ is $z$\textsf{-smooth} if it has no prime factor exceeding $z$. The number of $z$-smooth numbers up to $x$ is denoted by $\Psi(x, z)$, a quantity which has been studied extensively in the literature. 
\begin{thm}\label{thm:mainbound}
Let $f \colon \NatNos \rightarrow \Udisk$ be a multiplicative function and $x, y, z, M$ be positive real numbers such that $M \ge 1$, $1<z<x$ and $e^{11/2} \le y \le z^{1/(18\log \log z)^2}$. Assume that there exists a complex number $\varrho \in \mathbb U$ such that for all $Y \ge y$,
\begin{equation}\label{eq:mainhypo}
\sum_{y<p \le Y} f(p) = \varrho (\pi(Y) - \pi(y)) + O(M Y \err(y)), \end{equation}
for some decreasing function $\err \colon \reals^+ \rightarrow \reals^+$ satisfying $\lim_{X \rightarrow \infty} \err(X) = 0$. Then
\begin{multline*}
\sum_{n \le x} f(n) \ll \frac{|\varrho| x}{\log z} \left(\frac{\log z}{\log y}\right)^{\Ree(\varrho)} \expfpp \\+ ~ \frac{x (\log y)^{1+|\varrho|}}{(\log z)^{2-\Ree(\varrho)}} \expfpp + \Psi(x, z) + Mx (\log x)^2 \left(\err(y)+\frac1y\right),
\end{multline*}
where the implied constant depends at most on the implied constant in \eqref{eq:mainhypo}. 
\end{thm}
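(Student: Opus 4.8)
The plan is to decompose each $n \le x$ according to its prime factors relative to the two thresholds $y < z$, dispatch the small‑prime part by the trivial bound $|f|\le 1$, and feed the hypothesis \eqref{eq:mainhypo} into the large‑prime sums by partial summation while retaining the cancellation it provides. Concretely, write $n = m\,s\,t$, where $m = \prod_{p^a \| n,\ p \le y} p^a$ is the $y$‑smooth part of $n$, where $s$ collects the primes of $n$ lying in $(y, z]$, and where $t$ collects the primes exceeding $z$, so that $f(n) = f(m)f(s)f(t)$ by multiplicativity. The $n \le x$ divisible by $p^2$ for some $p > y$ number $\le x \sum_{p > y} p^{-2} \ll x/(y \log y)$, absorbed by the last term of the asserted bound, so we may take $s$ and $t$ squarefree; and the $z$‑smooth $n$ (those with $t = 1$) contribute $\le \Psi(x, z)$, so we may assume $t > 1$, say $t = Q_1 \cdots Q_r$ with $z < Q_r < \cdots < Q_1$ and $1 \le r \le \log x/\log z$. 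The smooth variable is harmless: $\sum_{a \ge 0} |f(p^a)| p^{-a} \le \bigl(1 + |f(p)|/p\bigr)\bigl(1 + \tfrac{1}{p(p-1)}\bigr)$ gives $\sum_{P^+(m) \le y} |f(m)|/m \ll \expfpp$, which is the mechanism producing the factor $\expfpp$ in every term of the bound.

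\emph{The very rough factor $t$.} Fix $m$ and $s$. When $r = 1$ the inner sum over $t$ is $\sum_{z < Q \le x/(ms)} f(Q)$, and \eqref{eq:mainhypo} with $Y = x/(ms) \ge z > y$ evaluates it as $\varrho\bigl(\pi(x/(ms)) - \pi(z)\bigr) + O\bigl(M \tfrac{x}{ms}\err(y)\bigr)$, where $\pi(X) - \pi(z) \ll X/\log z$ for $X \ge z$; this is the source of the factor $|\varrho|/\log z$ in the leading term. When $r \ge 2$ I bound $|f(Q_i)| \le 1$ and count $z$‑rough squarefree integers up to $x/(ms)$ with at least two prime factors, which is $\ll \tfrac{x}{ms} \cdot \tfrac{\log\log x}{\log z}$; the cancellation factor $|\varrho|$ is lost here, and this is the origin of the secondary term, whose exponent $2 - \Ree\varrho$ reflects this extra roughness saving combined with the leading $(\log z/\log y)^{\Ree\varrho}$ from the next step. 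Summing the $O$‑errors and the $r \ge 2$ terms over the $\ll \log x/\log z$ values of $r$, and over $m, s$ against $1/(ms)$, produces the $Mx(\log x)^2(\err(y) + 1/y)$ term together with part of the secondary term.

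\emph{The intermediate factor $s$: keeping the cancellation.} What remains is a leading contribution of the shape $\tfrac{|\varrho|x}{\log z}\sum_{P^+(m) \le y}\tfrac{|f(m)|}{m}\,\bigl|\sum_{s} f(s)/s\bigr|$, the $s$‑sum being over squarefrees with all prime factors in $(y, z]$ (modulo a truncation $ms \le x/z$ inherited from the $t$‑range). Replacing $|f(s)|$ by $1$ here would only give $\prod_{y < p \le z}(1 + 1/p) \asymp \log z/\log y$, i.e.\ the exponent $1$ in place of $\Ree\varrho$ --- precisely the loss incurred in \cite{PSR23A}. Instead, partial summation applied to \eqref{eq:mainhypo} gives $\sum_{y < p \le z}\tfrac{f(p) - \varrho}{p} \ll M\err(y)$, so $\bigl|\sum_{s} f(s)/s\bigr| = \bigl|\prod_{y < p \le z}(1 + f(p)/p)\bigr| \ll \exp\bigl(\Ree \sum_{y < p \le z} \tfrac{f(p)}{p}\bigr) \ll (\log z/\log y)^{\Ree\varrho}$ once the truncation is discarded at admissible cost. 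Combined with $\sum_{P^+(m)\le y}|f(m)|/m \ll \expfpp$ this assembles the leading term $\tfrac{|\varrho|x}{\log z}(\log z/\log y)^{\Ree\varrho}\expfpp$.

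\emph{The main obstacle.} The crux is the order of summation. To keep the genuine size $(\log z/\log y)^{\Ree\varrho}$ of $\prod_{y < p \le z}(1 + f(p)/p)$ one must sum $t$ first --- its count $\asymp \tfrac{x/(ms)}{\log z}$ and its error $M\tfrac{x}{ms}\err(y)$ both coming out of the hypothesis --- then sum $s$ against $1/s$ with no truncation remaining (so that $\prod_{y<p\le z}|1 + f(p)/p|$ appears with its true size rather than the Rankin‑type bound $(\log z/\log y)^{|\varrho|}$), and only then sum $m$ against $1/m$; any cruder decoupling of the constraint $mst \le x$ reinstates $|\varrho|$ in the exponent, which is the defect of \cite{PSR23A} that this theorem avoids. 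The bulk of the work is therefore the bookkeeping: verifying that every discarded truncation, every $r \ge 2$ contribution, and every error term arising from \eqref{eq:mainhypo} is dominated by one of the four terms in the statement --- in particular that the secondary exponent is $2 - \Ree\varrho$ and not $2 - |\varrho|$. The classical inputs used (a Mertens/Buchstab‑type bound $\#\{k \le u : P^-(k) > z\} \ll u/\log z$ for $u \ge z$, and standard estimates for $\Psi(x, z)$) are valid exactly in the stated range $y \le z^{1/(18\log\log z)^2}$, which is what makes the error accounting go through.
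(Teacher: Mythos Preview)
Your decomposition $n = m\,s\,t$ according to the three prime ranges $[2,y]$, $(y,z]$, $(z,\infty)$ is \emph{not} what the paper does, and as sketched it does not close. The paper works with only two ranges: it writes $n = m\,P_j\cdots P_1$ with $P^+(m)\le y < P_j < \cdots < P_1$ and $P_1>z$, then uses the hypothesis \eqref{eq:mainhypo} on each $P_i$ in turn to replace $f(P_1)\cdots f(P_j)$ by $\varrho^j$. This reduces the main term to $\sum_{P^+(m)\le y} f(m)\sum_{r\le x/m,\,P^-(r)>y}\varrho^{\Omega(r)}$, and the inner sum is evaluated by a separate Landau--Selberg--Delange computation (Lemma~\ref{lem:roughtwistedLiouville}), yielding $\ll \frac{X}{(\log X)^{1-\Ree\varrho}}\bigl(\frac{|\varrho|}{(\log y)^{\Ree\varrho}} + \frac{(\log y)^{1+|\varrho|}}{\log z}\bigr)$. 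The saving $\Ree\varrho$ in the exponent comes from $|(\log X)^{\varrho-1}| = (\log X)^{\Ree\varrho-1}$ in the LSD main term, not from an Euler product over $(y,z]$.

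Your route breaks at the $r\ge 2$ step. Once you bound $|f(Q_1)\cdots f(Q_r)|\le 1$ and replace the $t$-sum by a nonnegative count $N(x/(ms))\ll \frac{x}{ms\log z}$, you can no longer exploit cancellation in $\sum_s f(s)\,N(x/(ms))$: to bound it you must take absolute values, and then $\sum_{P^-(s)>y,\,P^+(s)\le z}|f(s)|/s\asymp \log z/\log y$, giving a contribution $\gg \frac{x}{\log y}\expfpp$. For $\varrho$ near $-1$ (precisely the regime that motivates the theorem) none of the four terms in the asserted bound absorbs this; e.g.\ with $y=\exp((\log x)^{\epsilon})$ and $z=x^{1/\log_2 x}$ the claimed bound is $\ll x(\log x)^{\epsilon-2+o(1)}\expfpp$ while your $r\ge 2$ term is $x(\log x)^{-\epsilon}\expfpp$. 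The remark that the secondary exponent $2-\Ree\varrho$ ``reflects this extra roughness saving combined with $(\log z/\log y)^{\Ree\varrho}$'' is not substantiated: there is only one factor of $\log z$ in the $z$-rough count, and the phase on $f(s)$ has already been discarded. Even the $r=1$ step is incomplete: the sum $\sum_s f(s)(\pi(x/(ms))-\pi(z))$ carries a weight depending on $s$ and a truncation $s\le x/(mz)$, and you have not shown that it can be replaced by $\frac{x}{m\log z}\prod_{y<p\le z}(1+f(p)/p)$ at admissible cost. The missing ingredient is exactly the LSD lemma: a genuine asymptotic for $\sum_{r\le X,\,P^-(r)>y}\varrho^{\Omega(r)}$ that converts the oscillation of $\varrho^{\Omega(r)}$ into the complex power $(\log X)^{\varrho-1}$.
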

A few comments are in order. In applications, the terms involving $\Psi(x, z)$, $\err(y)$ and $1/y$ often become very small, so the usefulness of the bounds is dictated by the sizes of the first two terms. The crucial difference between this bound and the one in Theorem 1.1 of \cite{PSR23A} is the factor $(\log z)^{\Ree(\varrho)}$ in the first expression, whose analogue in \cite[Theorem 1.1]{PSR23A} contained the larger factor $(\log x)^{|\varrho|}$. These greater savings turn out to play a significant role in certain applications. However, our bound above cannot entirely subsume \cite[Theorem 1.1]{PSR23A} because the additional term with $(\log y)^{1+|\varrho|}$ (which has no analogue in \cite[Theorem 1.1]{PSR23A}) can limit the applicability of our bound. We shall witness these phenomena in our applications (see the remark following the proof of Proposition \ref{prop:SigmaEvenConvenient}).

Both Theorem \ref{thm:mainbound} and the arguments used to prove it can be applied to study the asymptotic distribution of multiplicative functions like the sum-of-divisors function $\sigma(n)$ in coprime residue classes to moduli varying uniformly in a wide range. Following Narkiewicz  \cite{narkiewicz66}, we say that an integer-valued arithmetic function $g$ is \textsf{weakly uniformly distributed} (or \textsf{weakly equidistributed}) modulo $q$ if $\gcd(g(n),q)=1$ for infinitely many $n$ and, for every \emph{coprime} residue class $a\bmod{q}$,
\begin{equation}\label{eq:WUDrelation} \#\{n\le x: g(n)\equiv a\pmod{q}\} \sim \frac{1}{\phi(q)} {\#\{n\le x: \gcd(g(n),q)=1\}} , \quad\text{as $x\to\infty$}. \end{equation}
In \cite[Theorem I]{narkiewicz66}, Narkiewicz gives a criterion for deciding weak equidistribution in a large class of multiplicative functions. As an application, we can show that $\sigma(n)$ is weakly equidistributed mod $q$ for any $q$ coprime to $6$. This result was extended by \'Sliwa \cite{Sli73} (see also \cite[Proposition 7.9, p. 106]{NarBook}), who showed that $\sigma(n)$ is weakly equidistributed to a fixed modulus $q$ precisely when $q$ is not divisible by $6$. 

As a natural analogue of the Siegel-Walfisz theorem for primes in arithmetic progressions, it seems of some interest to investigate analogues of these results where the modulus $q$ itself is allowed to vary uniformly in a certain range depending on the stopping point $x$ of inputs. To formalize this, we say that an integer-valued arithmetic function $g(n)$ is \textsf{weakly equidistributed} mod $q$, \textsf{uniformly for $q \le (\log x)^{K}$,} if:
\begin{enumerate}
\item[(i)] For every such $q$, $g(n)$ is coprime to $q$ for infinitely many $n$, and
\item[(ii)] The relation \eqref{eq:WUDrelation} holds uniformly in moduli $q \le (\log x)^K$ and in coprime residue classes $a$ mod $q$. Explicitly, this means that for any $\epsilon>0$, there exists $X(\epsilon)>0$ such that the ratio of the left hand side of \eqref{eq:WUDrelation} to the right hand side lies in $(1-\epsilon, 1+\epsilon)$ for all $x>X(\epsilon)$, $q \le (\log x)^K$ and coprime residues $a$ mod $q$.   
\end{enumerate}
The weak equidistribution of certain classes of arithmetic functions to uniformly varying moduli was investigated in \cite{LPR21, PSR22, PSR23}. A general theorem established in \cite{PSR23} yields the weak equidistribution of $\sigma(n)$ uniformly to moduli $q \le (\log x)^K$ coprime to $6$. However, the methods in \cite{PSR23} are unable to address the case when $q$ is even but not divisible by $6$ (a case in which \'Sliwa is able to show that $\sigma(n)$ is weakly equdistributed mod $q$). Moreover, the error terms that arise from the arguments in \cite{PSR23} are quite weak. Our next three theorems address these defects. We first establish an effective estimate demonstrating the weak equidistribution of $\sigma(n)$ to odd moduli $q \le (\log x)^K$ and obtain a strong error term that can be expected to be (essentially) of the correct order of magnitude. 
\begin{thm}\label{thm:sigmaodd}
Fix $K>0$ and $\epsilon \in (0, 1)$.  We have
\begin{multline*}
\#\{n \le x: \sigma(n) \equiv a \pmod q\}\\ = \phiqrec \#\{n \le x: \gcd(\sigma(n), q)=1\} + O_{K, \epsilon}\left(\frac x{\phi(q)(\log x)^{1-\alpha(q)(1/3+\epsilon)}}\right),
\end{multline*}
uniformly in odd moduli $q \le (\log x)^K$ and coprime residue classes $a$ mod $q$, where $\alpha(q) \coloneqq \prod_{\ell \mid q} \left(1-\frac1{\ell-1}\right)$. As a consequence, $\sigma(n)$ is weakly equidistributed mod $q$, uniformly for odd moduli $q \le (\log x)^K$.
\end{thm}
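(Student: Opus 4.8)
The plan is to detect the congruence by Dirichlet characters modulo $q$: writing
$\#\{n\le x:\sigma(n)\equiv a\pmod q\}=\phiqrec\sum_{\chi\bmod q}\overline\chi(a)\sum_{n\le x}\chi(\sigma(n))$ (with $\chi(\sigma(n)):=0$ when $\gcd(\sigma(n),q)>1$), the principal character contributes exactly $\phiqrec\#\{n\le x:\gcd(\sigma(n),q)=1\}$, so it suffices to prove $\sum_{\chi\ne\chi_0}\big|\sum_{n\le x}\chi(\sigma(n))\big|\ll_{K,\epsilon}x/(\log x)^{1-\alpha(q)(1/3+\epsilon)}$. For each $\chi$ the function $n\mapsto\chi(\sigma(n))$ is multiplicative (as $\sigma$ is multiplicative and $\chi$ completely multiplicative), takes values in $\Udisk$, and equals $\chi(p+1)$ at a prime $p$. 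By the Siegel--Walfisz theorem, uniform for $q\le(\log x)^K$, one has for $Y\ge y$ that $\sum_{y<p\le Y}\chi(p+1)=\varrho_\chi(\pi(Y)-\pi(y))+O(\phi(q)Ye^{-c_K\sqrt{\log y}})$, where $\varrho_\chi:=\phiqrec\sum_{\gcd(c,q)=1}\chi(c+1)$; thus the hypothesis of Theorem \ref{thm:mainbound} holds for $f=\chi\circ\sigma$ with $M=\phi(q)$ and $\err(t)=e^{-c_K\sqrt{\log t}}$.

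First I would pin down $\varrho_\chi$. The Chinese Remainder Theorem gives $\varrho_\chi=\prod_{\ell\mid q}\varrho_{\chi_\ell}$, and, substituting $d=c+1$ and applying orthogonality on the index-$(\ell-1)$ subgroup $\{d\equiv1\pmod\ell\}$, one finds $\varrho_{\chi_\ell}=\frac{\ell-2}{\ell-1}$ if $\chi_\ell$ is principal, $\varrho_{\chi_\ell}=-\frac1{\ell-1}$ if $\chi_\ell$ has conductor $\ell$, and $\varrho_{\chi_\ell}=0$ if $\chi_\ell$ has conductor $\ge\ell^2$. Hence $\varrho_\chi$ is always real; $\varrho_{\chi_0}=\alpha(q)$; and for nontrivial $\chi$, either $\varrho_\chi=0$ (exactly when some $\chi_\ell$ has conductor $\ge\ell^2$), or $\varrho_\chi=(-1)^{|S|}\alpha(q)\big/\prod_{\ell\in S}(\ell-2)$, where $\emptyset\ne S=S(\chi):=\{\ell\mid q:\chi_\ell\text{ has conductor }\ell\}$. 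In the latter case $|\varrho_\chi|\le\alpha(q)$, and $\varrho_\chi>0$ forces $|S|\ge2$, hence $\prod_{\ell\in S}(\ell-2)\ge3$ and $\varrho_\chi\le\alpha(q)/3$; the value $\alpha(q)/3$ is attained precisely by the $O(1)$ characters with $S(\chi)=\{3,5\}$ (when $15\mid q$). This is where the factor $1/3$ in the theorem originates.

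The characters now split into two regimes. If $\varrho_\chi=0$, then $\sum_n\chi(\sigma(n))n^{-s}$ equals $\prod_p(1+\chi(p+1)p^{-s})$ times a factor absolutely convergent for $\Ree s>\frac12$ (collecting the $\chi(\sigma(p^a))$ with $a\ge2$), and the product extends holomorphically -- with no singularity at $s=1$ -- to a Siegel--Walfisz zero-free region $\Ree s>1-c_K/\log_2 x$, the point being that the coefficient of $\log\zeta(s)$ in $\sum_p\chi(p+1)p^{-s}$ is $\phi(q)\varrho_\chi=0$; a contour shift then gives $\sum_{n\le x}\chi(\sigma(n))\ll_{K,A}x/(\log x)^A$ for every $A$, so (taking $A=K+2$) these $\le\phi(q)\le(\log x)^K$ characters contribute $O_K(x/(\log x)^2)$ to the sum. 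If $\varrho_\chi\ne0$, I apply Theorem \ref{thm:mainbound} with the above $f,M,\err$, choosing $z$ with $\log z\asymp\log x\cdot(\log_3 x)/(\log_2 x)$ so that $\Psi(x,z)\ll x/(\log x)^{K+2}$, and $y$ with $\log y\asymp_K(\log_2 x)^2$ so that $Mx(\log x)^2(\err(y)+1/y)\ll x/(\log x)^{K+2}$; the admissibility conditions $e^{11/2}\le y\le z^{1/(18\log\log z)^2}$ hold for large $x$. Since $\exp(\sum_{p\le y}|f(p)|/p)\ll\log y=(\log x)^{o(1)}$ and $\log z=(\log x)^{1-o(1)}$, Theorem \ref{thm:mainbound} yields $\big|\sum_{n\le x}\chi(\sigma(n))\big|\ll_K|\varrho_\chi|x(\log x)^{\Ree\varrho_\chi-1+o(1)}+x/(\log x)^{2-o(1)}+x/(\log x)^{K+2}$.

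It remains to sum over the nontrivial $\chi$ with $\varrho_\chi\ne0$. The size is governed by $(\log x)^{\Ree\varrho_\chi}$, which -- among nontrivial $\chi$ -- is maximised by the $O(1)$ characters with $\varrho_\chi\approx\alpha(q)/3$, supplying $\ll x(\log x)^{\alpha(q)/3-1+o(1)}$; the characters with $\varrho_\chi\le0$ contribute strictly less, and are controllable exactly because Theorem \ref{thm:mainbound} carries the factor $(\log z)^{\Ree\varrho_\chi}$ (which is $\le1$ here) in place of the weaker $(\log x)^{|\varrho_\chi|}$ of \cite[Theorem 1.1]{PSR23A} -- the latter would be of the same order as the main term $\asymp x/(\log x)^{1-\alpha(q)}$ and would preclude equidistribution. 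Assembling all contributions and dividing by $\phi(q)$ gives $\#\{n\le x:\sigma(n)\equiv a\pmod q\}=\phiqrec\#\{n\le x:\gcd(\sigma(n),q)=1\}+O_{K,\epsilon}\big(x/(\phi(q)(\log x)^{1-\alpha(q)(1/3+\epsilon)})\big)$, the $(\log x)^{\epsilon\alpha(q)}$ of slack absorbing the accumulated secondary factors (powers of $\log_2 x$ from Mertens-type sums, the number $\approx2^{\omega(q)}$ of characters sharing a given small value of $|\varrho_\chi|$, and implied constants), all of which are $(\log x)^{o_K(1)}$ while $\alpha(q)\gg_K1/\log_3 x$ for odd $q\le(\log x)^K$. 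Weak equidistribution then follows since $\#\{n\le x:\gcd(\sigma(n),q)=1\}$ is of order $x/(\log x)^{1-\alpha(q)}$ and $\alpha(q)>0$, so the error is a true $o$ of the main term. The main obstacle is precisely this final step -- making the summation over characters uniform in $q$: bounding $\sum_{\chi\bmod q}|\varrho_\chi|(\log x)^{\Ree\varrho_\chi}$ and its relatives so that the secondary losses genuinely stay inside the $(\log x)^{\epsilon\alpha(q)}$ window for \emph{every} odd $q\le(\log x)^K$ simultaneously; both the sharpened first term of Theorem \ref{thm:mainbound} and a careful treatment of the negative-$\varrho_\chi$ characters are indispensable there.
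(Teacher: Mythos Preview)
Your computation of $\varrho_\chi$ is correct and matches the paper's, and your identification of the extremal value $\alpha(q)/3$ (attained at $S(\chi)=\{3,5\}$) is exactly the source of the constant $1/3$. The gap is in the final summation over characters, which you flag yourself as the main obstacle but do not actually resolve. Applying Theorem~\ref{thm:mainbound} as a black box gives a bound of the shape $|\varrho_\chi|\,x(\log x)^{\Ree\varrho_\chi-1+o(1)}$, so you must control $\sum_{\chi\ne\chi_0}|\varrho_\chi|(\log x)^{\Ree\varrho_\chi}$. But for each nonempty $S\subset\{\ell:\ell\mid q\}$ there are $\prod_{\ell\in S}(\ell-2)$ characters with $S(\chi)=S$, each with $|\varrho_\chi|=\alpha(q)/\prod_{\ell\in S}(\ell-2)$, so $\sum_{\chi\ne\chi_0}|\varrho_\chi|=\alpha(q)\,(2^{\omega(q)}-1)$. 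For $q=\prod_{3\le\ell\le Y}\ell$ with $Y\sim K\log_2 x$ one has $\omega(q)\asymp K\log_2 x/\log_3 x$ and $\alpha(q)\asymp 1/\log_3 x$, so both $2^{\omega(q)}$ and your slack $(\log x)^{\epsilon\alpha(q)}$ are $\exp\big(c\cdot\log_2 x/\log_3 x\big)$; the constants are $K\log 2$ and $\epsilon\cdot C$ respectively, and for $K$ large relative to $\epsilon$ the slack does \emph{not} absorb $2^{\omega(q)}$. So the claim that ``all of which are $(\log x)^{o_K(1)}$ while $\alpha(q)\gg_K 1/\log_3 x$'' is correct but insufficient: two quantities both of shape $(\log x)^{o(1)}$ need not be comparable.

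The paper repairs this not by a sharper application of Theorem~\ref{thm:mainbound} but by first stripping out the $n$ with $P_2(n)\le y$ (handled directly via Brun--Titchmarsh, contributing an acceptable error), and then rerunning the \emph{proof} of Theorem~\ref{thm:mainbound} on the remaining $n$, which now have $j\ge 2$ in the decomposition $n=mP_j\cdots P_1$. This yields a bound with $|\rho_\chi|^2$ in place of $|\rho_\chi|$, and then $\sum_{\chi}|\rho_\chi|^2\le\alpha(q)^2\sum_{S}\prod_{\ell\in S}(\ell-2)^{-1}\le\alpha(q)$ is bounded independently of $\omega(q)$. Theorem~\ref{thm:mainbound} itself (with its $\Ree\varrho$ refinement) is invoked only for the single exceptional character $\psi$ of conductor $3$, where $\rho_\psi=-\alpha(q)$; this is the one place the improvement over \cite[Theorem~1.1]{PSR23A} is genuinely needed. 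Your contour-shift treatment of the $\varrho_\chi=0$ characters is unnecessary, incidentally: Theorem~\ref{thm:mainbound} with $\varrho=0$ already gives $x(\log y)^{O(1)}/(\log z)^2$.
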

To obtain uniform analogues of \'Sliwa's results, we still need to address the case when the modulus $q$ is even. Here, the first crucial difference with the above theorem is that while studying the weak equidistribution of $\sigma(n)$ to even moduli, the set of relevant inputs $n \le x$ is highly sparse. In fact, elementary number theory shows that if $\sigma(n)$ is odd, then $n$ must be of the form $2^k m^2$ for some odd $m$, which means that there are $O(x^{1/2})$ many $n \le x$ which have $\sigma(n)$ coprime to a given even modulus $q$. Sparse sets like this can often present difficulties while studying arithmetic questions about them. However, we can work around this sparsity issue by looking at the behaviour of $\sigma(n)$ at the squares of primes. 

Another crucial difference between the situations for odd and even moduli is that in the latter case, the positive integers $n$ with too few large prime factors present an obstruction to uniformity in the modulus. As such, in order to restore uniformity in the modulus $q$, it becomes necessary to restrict the set of inputs $n$ to those having sufficiently many prime factors exceeding $q$. To make this precise, we write $P(n)$ or $P^+(n)$ for the largest prime divisor of $n$, with the convention that $P(1)=1$. We set $P_1(n) \coloneqq P(n)$ and define, inductively, $P_k(n) \coloneqq P_{k-1}(n/P(n))$. Thus, $P_k(n)$ is the $k$th largest prime factor of $n$ (counted with multiplicity), with $P_k(n)=1$ if $\Omega(n) < k$. 

In what follows, we set 
$$\alphatil(q) \coloneqq \frac1{\phi(q)} \#\{v \bmod q: \gcd(v(v^2+v+1), q)=1\} = \prod_{\substack{\ell \mid q\\ \ell \equiv 1 \pmod 3}} \left(1-\frac2{\ell-1}\right),$$
the last equality being true by the Chinese Remainder Theorem and the law of quadratic reciprocity.  
\begin{thm}\label{thm:sigmaeven}
Fix $K>0$ and $\epsilon \in (0, 1)$.  We have
\begin{multline}\label{eq:sigmaeven}
\#\{n \le x: P_6(n)>q, ~ \sigma(n) \equiv a \pmod q\}\\ = \phiqrec \#\{n \le x: P_6(n)>q, ~ \gcd(\sigma(n), q)=1\} + O_{K, \epsilon}\left(\frac{x^{1/2}}{\phi(q)(\log x)^{1-\alphatil(q)(1/4+\epsilon)}}\right),
\end{multline}
uniformly in even moduli $q \le (\log x)^K$ not divisible by $3$, and in coprime residues $a$ mod $q$. 
\end{thm}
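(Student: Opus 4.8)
The plan is to couple the orthogonality of Dirichlet characters mod $q$ with Theorem~\ref{thm:mainbound}, the point being to harvest exactly the power saving that Theorem~\ref{thm:mainbound} is designed to detect. Since $q$ is even, $\gcd(\sigma(n),q)=1$ forces $\sigma(n)$ odd, which happens precisely when $n=2^k m^2$ with $k\ge 0$ and $m$ odd; and since $2\le q$, for such $n$ the condition $P_6(n)>q$ is equivalent to $m$ having at least three prime factors exceeding $q$ (with multiplicity), i.e.\ $P_3(m)>q$ (the primes of $m$ occur with doubled multiplicity in $n$). Writing $\sigma(n)=(2^{k+1}-1)\,\sigma(m^2)$ and using $\1_{\sigma(n)\equiv a}=\phiqrec\sum_{\chi\bmod q}\overline\chi(a)\chi(\sigma(n))$ (valid as $\gcd(a,q)=1$),
\[
\#\{n\le x:P_6(n)>q,\ \sigma(n)\equiv a\}=\phiqrec\sum_{\chi\bmod q}\overline\chi(a)\,S_\chi,\qquad S_\chi:=\sum_{k\ge0}\chi(2^{k+1}-1)\!\!\sum_{\substack{m\le (x/2^k)^{1/2}\\ m\ \mathrm{odd},\ P_3(m)>q}}\!\!\chi(\sigma(m^2)).
\]
The term $\chi=\chi_0$ reproduces exactly the main term of \eqref{eq:sigmaeven}, so it remains to prove $\sum_{\chi\ne\chi_0}|S_\chi|\ll_{K,\epsilon}x^{1/2}/(\log x)^{1-\alphatil(q)(1/4+\epsilon)}$.

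Next I would isolate the quantity driving the estimate, the twisted local density $\etachi:=\phiqrec\sum_{v\bmod q,\ \gcd(v,q)=1}\chi(v^2+v+1)=\prod_{\ell\mid q}\etachiell$, and record three facts. (i) $\etachi=0$ whenever the $2$-component of $\chi$ is nontrivial, since $v\mapsto v^2+v+1$ permutes $(\Z/2^e)^\times$ (its derivative $2v+1$ is a unit). (ii) $\Ree\etachi\le\tfrac14\alphatil(q)$ for all $\chi\ne\chi_0$: by multiplicativity this reduces to proving $\Ree\etachiell\le\tfrac14\alphatil(\ell^{e_\ell})$ and $|\etachiell|\le\tfrac12\alphatil(\ell^{e_\ell})$ for nontrivial $\chi_\ell$ (the first needed when $\chi$ has a single nontrivial component, the second then giving $|\etachi|\le\tfrac14\alphatil(q)$ when it has two or more), and this follows after completing the square, $v^2+v+1=\tfrac14((2v+1)^2+3)$, from the Weil bound for the resulting Jacobi sum once $\ell$ is large, together with a direct check of the finitely many small $\ell$ (in particular $\ell\in\{5,7,13\}$, where equality can occur); the hypothesis $3\nmid q$ keeps $\alphatil(\ell)=1-\tfrac2{\ell-1}$ and the root structure of $v^2+v+1\bmod\ell$ generic. (iii) $\sum_{\chi\bmod q}|\etachi|^2\ll 2^{\omega(q)}\prod_{\ell\mid q}(1+O(1/\ell))\ll_\epsilon(\log x)^\epsilon$, obtained from $\sum_{\chi_\ell\bmod\ell^{e_\ell}}|\etachiell|^2=\phi(\ell^{e_\ell})^{-1}\#\{(v,v'):v^2+v+1\equiv v'^2+v'+1,\ \gcd(vv',\ell)=1\}=2+O(1/\ell)$ (the solutions being $v'=v$ and $v'=-1-v$), together with $\omega(q)\ll\log_2 x/\log_3 x$ for $q\le(\log x)^K$.

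For the main estimate, fix $\chi\ne\chi_0$. One first checks that $g_\chi(m):=\chi(\sigma(m^2))\1_{2\nmid m}$ satisfies \eqref{eq:mainhypo} with $\varrho=\etachi$: by Siegel--Walfisz in residue classes mod $q\le(\log x)^K$, $\sum_{y<p\le Y}\chi(p^2+p+1)=\etachi(\pi(Y)-\pi(y))+O(Y\exp(-c\sqrt{\log Y}))$. Using $P_3(m)>q$ (with room to spare), one peels off two large prime factors of $m$, evaluating the resulting sums over them by this same relation — each contributing a main term carrying a factor $\etachi$, with negligible (power-of-$\log$ times $x^{1/2}e^{-c\sqrt{\log x}}$) errors — and bounds the remaining sum over the reduced variable by Theorem~\ref{thm:mainbound} (taking $z$ a suitable power of $x$, up to $\exp((\log x)^{1-o(1)})$, so that $\Psi(\cdot,z)$ is admissible, and $y$ bounded, so $\exp(\sum_{p\le y}|g_\chi(p)|/p)\ll1$), which supplies a further factor $|\etachi|(\log x)^{\Ree\etachi-1}$. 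A routine argument — splitting off the $q$-smooth part of $m$, noting the relevant $m$ are supported on the density-$\alphatil(q)$ set of primes $p$ with $\gcd(p^2+p+1,q)=1$, and summing the dyadic series in $k$ (geometric, dominated by $k=0$) — then yields $|S_\chi|\ll_K|\etachi|^2(\log x)^{\Ree\etachi}\,x^{1/2}(\log_2 x)^{O(1)}/\log x+x^{1/2}e^{-c\sqrt{\log x}}(\log_2 x)^{O(1)}$. Summing over $\chi\ne\chi_0$ and invoking (ii) and (iii),
\[
\sum_{\chi\ne\chi_0}|S_\chi|\ll\frac{x^{1/2}(\log_2 x)^{O(1)}}{\log x}(\log x)^{\alphatil(q)/4}\sum_{\chi\bmod q}|\etachi|^2+x^{1/2}e^{-c'\sqrt{\log x}}\ll_{K,\epsilon}\frac{x^{1/2}}{(\log x)^{1-\alphatil(q)(1/4+\epsilon)}},
\]
absorbing $(\log_2 x)^{O(1)}(\log x)^\epsilon$ into $(\log x)^{\alphatil(q)\epsilon}$ via $\alphatil(q)\gg1/\log_3 x$; combined with the first display this proves \eqref{eq:sigmaeven}.

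The main obstacle is the interplay in the last two steps: one must detect the saving governed by $\Ree\etachi$ rather than $|\etachi|$ — the bound $\Ree\etachi\le\tfrac14\alphatil(q)$ is genuine whereas $|\etachi|$ can reach $\tfrac12\alphatil(q)$, so feeding $\Ree\etachi$ into the $(\log z)^{\Ree\varrho}$-savings of Theorem~\ref{thm:mainbound} is essential, and this is exactly the improvement over \cite{PSR23A} — while simultaneously extracting a second factor $\etachi$ from the large primes, so that the sum over the $\sim\phi(q)$ characters is governed by the bounded $\sum_\chi|\etachi|^2$ rather than by $\sum_\chi|\etachi|\asymp\sqrt q$ (which would be fatal when $\alphatil(q)$ is small). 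A secondary difficulty is the sparsity of the support, which is what forces the $P_6(n)>q$ restriction and the delicate separate treatment of the $q$-smooth part.
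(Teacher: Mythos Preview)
Your outline has a genuine gap in the application of Theorem~\ref{thm:mainbound}. You propose to take $y$ bounded there, so that $\exp(\sum_{p\le y}|g_\chi(p)|/p)\ll1$; but hypothesis~\eqref{eq:mainhypo} must then hold for \emph{all} $Y\ge y$ with a decreasing $\err$ tending to $0$, and Siegel--Walfisz supplies $\sum_{y<p\le Y}\chi(p^2+p+1)=\eta_\chi(\pi(Y)-\pi(y))+O_A(Y\exp(-c_A\sqrt{\log Y}))$ only when $q\le(\log Y)^A$. For $Y$ near a bounded $y$ and $q$ as large as $(\log x)^K$, this fails outright, so no admissible $\err$ exists. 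If instead you take $y$ large enough for Siegel--Walfisz to bite (say $y=\exp((\log x)^{\epsilon/4})$), then $\exp(\sum_{p\le y}|g_\chi(p)|/p)\asymp(\log y)^{\alphatil}$ is no longer bounded, and the second term of Theorem~\ref{thm:mainbound}, carrying $(\log y)^{1+|\eta_\chi|}$, accumulated over $\phi(q)$ characters, is exactly the obstruction the Remark after Proposition~\ref{prop:SigmaEvenConvenient} warns against. Relatedly, your ``peeling off two large prime factors'' uses Siegel--Walfisz for primes merely $>q$, where the error $\exp(-c\sqrt{\log q})=\exp(-c\sqrt{K\log_2 x})$ swamps the target $(\log x)^{-1+\alphatil(1/4+\epsilon)}$.

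The paper resolves this by splitting the range of $P_6(n)$. For $P_6(n)>y$ (Proposition~\ref{prop:SigmaEvenConvenient}), one peels off \emph{all} $j\ge3$ primes exceeding $y$ inside the proof mechanism of Theorem~\ref{thm:mainbound} itself, obtaining \eqref{eq:SigmaEven_ChisigFirstEstim} with the prefactor $\eta_\chi^j$; for all but the $O(1)$ characters with conductor in a fixed finite set $\mathcal S$ one uses $|\eta_\chi|\le\alphatil/4$ and $\sum_\chi|\eta_\chi|^3\le\exp(O(\sqrt{\log q}))$ (crucially a third moment, not your second), while for the exceptional characters one drops all side conditions and applies Theorem~\ref{thm:mainbound} directly using only $\Ree\eta_\chi\le\alphatil/4$. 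The intermediate range $q<P_6(n)\le y$ requires an entirely different argument (Section~6): writing $n=2^kr^2(P_1P_2P_3)^2$ with $P_i>q$, using Brun--Titchmarsh, and bounding $\max_w\#\{(v_1,v_2,v_3)\in U_q^3:\prod_i(v_i^2+v_i+1)\equiv w\}$ via Weil and Cochrane--Liu--Zheng prime-power estimates. Your outline omits this second argument entirely.

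A secondary issue: even granting your displayed bound for $|S_\chi|$, the final absorption goes the wrong way. One has $\sum_\chi|\eta_\chi|^2\asymp 2^{\omega(q)}$, which for $q$ a product of small primes reaches $\exp(cK\log_2 x/\log_3 x)$; meanwhile $(\log x)^{\alphatil\epsilon}\le\exp(c'\epsilon\log_2 x/\log_3 x)$ since $\alphatil\gg1/\log_3 x$ is the \emph{lower} bound on $\alphatil$. For fixed small $\epsilon$ the former is not dominated by the latter. The paper's $\sum_\chi|\eta_\chi|^3\le\exp(O(\sqrt{\log q}))$ is what makes the character sum genuinely absorbable --- and getting three factors of $\eta_\chi$ is precisely why the $P_6$ (rather than $P_4$) restriction is needed.
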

In subsection \ref{subsec:P6Optimal}, we give an explicit counterexample showing that the restriction $P_6(n)>q$ is optimal in the sense that it cannot be replaced by a condition of the form ``$P_k(n)>q$" with $k<6$, while still retainining weak equidistribution among the corresponding set of $n$'s in the same range of uniformity in $q$. In fact, we will show that uniformity fails to moduli $q$ of the form $2Q^2$ for certain odd squarefree integers $Q$ having several prime factors. While the proof of Theorem \ref{thm:sigmaeven} relies heavily on bounds on certain character sums modulo prime powers, the aforementioned counterexample works due to an excess in the number of lifts of solutions to a polynomial congruence from prime moduli to prime square moduli. 

On the other hand, if we restrict our moduli $q$ to be \textit{squarefree} even integers, then we may enlarge the set of inputs $n$ to those having four (as opposed to six) prime divisors exceeding $q$. 
\begin{thm}\label{thm:sigmaevensqfree}
Fix $K>0$ and $\epsilon \in (0, 1)$.  We have
\begin{multline}\label{eq:sigmaevensqfree}
\#\{n \le x: P_4(n)>q, ~ \sigma(n) \equiv a \pmod q\}\\ = \phiqrec \#\{n \le x: P_4(n)>q, ~ \gcd(\sigma(n), q)=1\} + O_{K, \epsilon}\left(\frac{x^{1/2}}{\phi(q)(\log x)^{1-\alphatil(q)(1/4+\epsilon)}}\right),
\end{multline}
uniformly in squarefree even moduli $q \le (\log x)^K$ not divisible by $3$, and in coprime residue classes $a$ mod $q$.
\end{thm}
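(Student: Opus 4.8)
The plan is to adapt the strategy behind Theorem~\ref{thm:sigmaeven}, exploiting the simplifications afforded by squarefreeness. First I would detect the residue class by orthogonality of Dirichlet characters mod $q$: writing $\bbm_{\sigma(n)\equiv a\!\!\!\pmod q}=\phiqrec\sum_{\chi\bmod q}\overline\chi(a)\,\chi(\sigma(n))$, the principal character produces the asserted main term $\phiqrec\#\{n\le x:P_4(n)>q,\ \gcd(\sigma(n),q)=1\}$, and the problem reduces to bounding $\sum_{\chi\ne\chi_0}|S_\chi(x)|$, where $S_\chi(x)=\sum_{n\le x,\ P_4(n)>q}\chi(\sigma(n))$. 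Since $q$ is even and $\chi(\sigma(n))$ vanishes unless $\gcd(\sigma(n),q)=1$ — which forces $\sigma(n)$ to be odd and hence $n=2^a m^2$ with $m$ odd — the sum is automatically restricted to this sparse set, and this is the source of the $x^{1/2}$ in the error term. Factoring $n=2^a m^2$ and using $\chi(\sigma(n))=\chi(2^{a+1}-1)\chi(\sigma(m^2))$, I reduce to estimating, for each admissible $a$ (those with $\gcd(2^{a+1}-1,q)=1$), the sum of the multiplicative, $\Udisk$-valued function $g_\chi(m):=\chi(\sigma(m^2))$ over odd $m\le(x/2^a)^{1/2}$ satisfying the relevant constraint, and then summing the geometrically convergent contribution over $a$.

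Next I translate $P_4(n)>q$: every prime factor of $m$ exceeding $q$ occurs with doubled multiplicity in $m^2$ (and $2\le q$), so $P_4(2^a m^2)>q$ holds exactly when $m$ has at least two prime factors, counted with multiplicity, exceeding $q$, i.e.\ $\Omega_{>q}(m)\ge 2$. This is where squarefreeness enters decisively: for squarefree $q$ the constraint $\gcd(\sigma(m^2),q)=1$ decouples across the primes $\ell\mid q$ — it asks precisely that $\ell\nmid\sigma(p^{2e})$ for every $\ell\mid q$ and every $p^e\|m$ — so only congruences to \emph{prime} moduli $\ell$ are involved, governed by the polynomial $v^2+v+1=\sigma(p^2)/1$ which has at most two roots mod $\ell$; in the non-squarefree setting of Theorem~\ref{thm:sigmaeven} one is forced to analyze $\sigma(n)$ modulo prime powers $\ell^k$, where the relevant congruences admit an excess of lifts, and this is exactly what compels the stronger hypothesis $P_6(n)>q$ there. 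The cleaner prime-modulus structure here is what lets the weaker restriction $P_4(n)>q$ suffice (the hypothesis $3\nmid q$ keeping all the relevant densities, in particular $\alphatilq$, strictly positive).

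For the analytic core I would apply Theorem~\ref{thm:mainbound} to $g_\chi$ over $m\le(x/2^a)^{1/2}$. At a prime $p$, $g_\chi(p)=\chi(p^2+p+1)$ depends only on $p\bmod q$, so Siegel--Walfisz (legitimate throughout since $q\le(\log x)^K$) verifies hypothesis~\eqref{eq:mainhypo} with $\varrho=\varrho(\chi):=\phiqrec\sum_{v:\ \gcd(v(v^2+v+1),q)=1}\chi(v^2+v+1)$, a suitable $M\ll q$, and $\err(t)=\exp(-c\sqrt{\log t})$; crucially $|\varrho(\chi)|\le\alphatilq$ with equality at $\chi_0$, and by the Chinese Remainder Theorem together with Weil's bound $|\varrho(\chi)|$ decays like $\alphatilq\prod_{\ell:\ \chi_\ell\ne\chi_{0,\ell}}\ell^{-1/2}$, so nonprincipal characters are genuinely small. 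Feeding $\varrho(\chi)$ into Theorem~\ref{thm:mainbound} with $z$ a suitably large power of $(x/2^a)^{1/2}$ and $y$ meeting its constraints, the improved factor $(\log z)^{\Ree(\varrho(\chi))}$ — in place of the cruder $(\log x)^{|\varrho(\chi)|}$ available from \cite{PSR23A} — converts the otherwise useless bound into a saving on the density scale, with the exponent $1/4$ (against $1/3$ for $\sigma(n)$ itself in the odd case) reflecting the sparsity $n=2^am^2$ together with the fact that $\Omega_{>q}(m)\ge 2$ permits extracting two prime factors exceeding $q$; the sum over those two extracted primes, evaluated via Siegel--Walfisz, contributes the complete character sum $\sum_{v\bmod q}\chi(v^2+v+1)$ squared, which for squarefree $q$ factors over $\ell\mid q$ into Weil-bounded pieces and supplies the extra $\phiqrec$ in the error. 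The genuinely $q$-smooth contribution is $\ll\Psi(x^{1/2},q)$ and hence negligible.

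The main obstacle is uniformity in $q$. The per-character estimates, and the Euler products appearing both in the main term (via a Selberg--Delange/Landau evaluation of $\#\{m\le Y:\gcd(\sigma(m^2),q)=1\}$) and in the error, carry factors growing with $\omega(q)$ and with the small prime divisors of $q$; for instance, bounding $\sum_{\chi\ne\chi_0}$ of the two-prime contribution naively produces a factor of order $2^{\omega(q)}$, which for $q$ a product of small primes $\equiv 1\pmod 3$ can swamp the saving $(\log x)^{\alphatilq(3/4-\epsilon)}$, since there $\alphatilq$ may be as small as $\asymp 1/\log_3 x$ while $\omega(q)$ is as large as $\asymp\log_2 x/\log_3 x$. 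Overcoming this requires, as in the proof of Theorem~\ref{thm:sigmaeven}, splitting off a bounded-complexity ``inconvenient'' part of $q$ to which the character/Weil machinery does not directly apply, treating it by a more robust equidistribution argument that effectively shrinks the modulus, and running the mean-value argument above only on the remaining ``convenient'' modulus, whose prime divisors are large enough for the Weil cancellation to dominate; one then has to track the Selberg--Delange constant and the pertinent Mertens constants carefully to land the stated bound. The remaining points — the short-interval ranges of the extracted primes where Siegel--Walfisz is weak, the tail in $a$, and the contribution of the diagonal case where the two extracted primes coincide — are handled by standard estimates.
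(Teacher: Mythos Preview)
Your overall architecture is not the paper's, and the proposed repair for the obstacle you yourself flag does not match what actually works.

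The paper does \emph{not} run a character-sum argument against the constraint $P_4(n)>q$. Instead it reuses Proposition~\ref{prop:SigmaEvenConvenient} (proved for Theorem~\ref{thm:sigmaeven}) to handle the range $P_6(n)>y$ with $y=\exp((\log x)^{\epsilon/4})$, and then shows directly that the leftover range $q<P_4(n)\le y$ (and $P_6(n)\le y$) contributes within the stated error. This leftover is handled by a \emph{direct counting} argument, not by characters: one writes such $n$ as $2^k r^2 (P_1P_2)^2$ with two distinct primes $P_1,P_2>q$, uses Brun--Titchmarsh to count $(P_1,P_2)$ in each residue class pair, and is reduced to bounding
\[
\#\Vqtil(w)\;=\;\#\{(v_1,v_2)\in U_q^2:\ (v_1^2+v_1+1)(v_2^2+v_2+1)\equiv w\pmod q\}.
\]
For squarefree $q$ this factors over $\ell\mid q$, and the key input is that $(X^2+X+1)(Y^2+Y+1)-w$ is \emph{absolutely irreducible} over $\F_\ell$, so the Weil-type point count (e.g.\ \cite{LY94}) gives $\#\widetilde{\mathcal V}_\ell(w)\le \phi(\ell)(1+O(\ell^{-1/2}))$ and hence $\#\Vqtil(w)\le \phi(q)\exp(O(\sqrt{\log q}))$. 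This is precisely where squarefreeness is used, and it is exactly what fails for prime-square moduli (the excess of lifts you allude to is what appears in the optimality example, not an obstruction inside a character-sum estimate).

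Your route, by contrast, tries to bound $\sum_{\chi\ne\chi_0}|S_\chi|$ by feeding the two extracted primes back into a character sum. As you correctly note, this produces $\sum_{\chi\ne\chi_0}|\etachi|^2$, and a Parseval computation shows that for squarefree $q$ this is of order $2^{\omega(q)}$, which can exceed $(\log x)^{\alphatil(3/4-\epsilon)}$ when $\alphatilq\asymp 1/\log\log q$. Your proposed fix---``splitting off a bounded-complexity inconvenient part of $q$ \ldots\ as in the proof of Theorem~\ref{thm:sigmaeven}''---is a misreading: the paper never splits $q$. What is split is (a) the set of \emph{characters}, into those of conductor in the fixed finite set $\mathcal S$ of Proposition~\ref{prop:etachiBounds} versus the rest, and (b) the set of \emph{inputs} $n$, into $P_6(n)>y$ versus the leftover. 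Neither of these removes the $2^{\omega(q)}$ from a two-prime character estimate; the point is that for the leftover one abandons characters altogether and appeals to the irreducibility/Weil bound on $\#\Vqtil(w)$. That is the missing idea in your sketch.
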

In subsection \ref{subsec:P4Optimal}, we show that the restriction $P_4(n)>q$ is optimal for squarefree even $q$ in the same sense as mentioned above. The constants ``$1/3$" (respectively, ``$1/4$") in the exponent of $\log x$ in the error terms of Theorem \ref{thm:sigmaodd} (resp., Theorems \ref{thm:sigmaeven} and \ref{thm:sigmaevensqfree}), arise as the maximum real part of the averages 
$$\rho_\chi \coloneqq \phiqrec \sum_{\substack{v \bmod q\\\gcd(v, q)=1}} ~ \chi(v+1) ~ ~ ~ \text{  \Bigg(resp. } \eta_\chi \coloneqq \phiqrec \sum_{\substack{v \bmod q\\\gcd(v, q)=1}} ~ \chi(v^2+v+1)\text{\Bigg)}$$
taken over the nonprincipal Dirichlet characters $\chi$ mod $q$. These averages play the roles of the parameter $\varrho$ in the corresponding analogues of Theorem \ref{thm:mainbound}, when the role of the multiplicative function $f$ is played by the functions $\chi \circ \sigma$ that arise by applying the orthogonality of Dirichlet characters to detect the congruence $\sigma(n) \equiv a \pmod q$. As we shall see in the course of our arguments, the maximum values of $\Ree(\rho_\chi)$ (resp. $\Ree(\eta_\chi)$) can be attained by characters $\chi$ of conductor $15$ (resp. conductors $5$, $7$, $13$, $35$), in the case when $q$ is divisible by the respective conductors (see the remarks at the end of the proofs of Theorem \ref{thm:sigmaodd} and Proposition \ref{prop:etachiBounds}). This suggests (but does not prove) that the constants ``$1/3$" and ``$1/4$" in the error terms of Theorems \ref{thm:sigmaodd}, \ref{thm:sigmaeven} and \ref{thm:sigmaevensqfree} cannot be replaced by smaller constants in general.

We conclude this introductory section with the remark that precise estimates for the main terms in Theorems \ref{thm:sigmaodd}, \ref{thm:sigmaeven} and \ref{thm:sigmaevensqfree} can be obtained from Theorems A and B in work of Scourfield \cite{scourfield85} (see \cite{scourfield84} and \cite[Proposition 2.1]{PSR22} for similar results). Finally, in the course of our arguments, we shall obtain sharp upper bounds for the character sums $\chi(\sigma(n))$ for nontrivial Dirichlet characters $\chi$ to our moduli $q$. In the range $q \le (\log x)^K$, these can be thought of as improved analogues of work of Balasuriya, Shparlinski and Sutantyo \cite{BSS09}. Related work on exponential sums involving $\sigma(n)$ has been (implicitly) done in \cite{BS04} and \cite{BS06}, while \cite{BL07} and \cite{BBS08} investigate the corresponding sums for the aliquot-divisor-sum function $s(n) \coloneqq \sigma(n) - n = \sum_{d \mid n\colon d \ne n} d$.
\subsection*{Notation and conventions:} We do not consider the zero function as multiplicative (thus, if $f$ is multiplicative, then $f(1)=1$). As mentioned above, we shall use $P^+(n)$ or $P(n)$ to denote the largest prime divisor of $n$, and $P_k(n)$ for the $k$-th largest prime factor of $n$. In addition, we denote the least prime divisor of $n$ by $P^-(n)$. When there is no danger of confusion, we write $(a,b)$ instead of $\gcd(a,b)$. Throughout, the letters $p$ and $\ell$ are to be understood as denoting primes. Implied constants in $\ll$ and $O$-notation may always depend on any parameters declared as ``fixed''; other dependence will be noted explicitly (for example, with subscripts). We write $\log_{k}$ for the $k$th iterate of the natural logarithm. We shall use $U_q$ to denote the group of units (or the multiplicative group) modulo a positive integer $q$.
\section{Uniform bounds on the partial sums of multiplicative functions: Proof of Theorem \ref{thm:mainbound}}
Our arguments for Theorems \ref{thm:mainbound} and \ref{thm:sigmaodd} begin similarly to those given for \cite[Theorems 1.1 and 1.3]{PSR23A}, but we include the complete argument here in order to keep the exposition self-contained. We first bound the contribution of the $n \le x$ that are either $z$-smooth or have a repeated prime factor exceeding $y$. Since $|f(n)| \le 1$, the contribution of the former $n$ is bounded in absolute value by $\Psi(x, z)$, while that of the latter $n$ is bounded in absolute value by
$$\sum_{p>y} \sum_{\substack{n \le x\\p^2 \mid n}} 1 \le x\sum_{p>y} \frac1{p^2} \ll \frac xy.$$
Both of these are absorbed in the expressions given in the claimed bounds. 

Let $\sumxyzf$ denote the sum over the remaining $n$, namely those that have $P^+(n)>z$ and no repeated prime factor exceeding $y$. Any $n$ counted in this sum can be uniquely written in the form $m P_j \cdots P_1$ for some $j \ge 1$, where $P_1 = P^+(n) > z$ and $P^+(m) \le y < P_j < \dots < P_1$. As such $f(n) = f(m) f(P_j) \cdots f(P_1)$ and 
\begin{align*}\allowdisplaybreaks
\sumxyzf &= \sum_{j \ge 1} \summ f(m) \sum_{\substack{P_1, \dots, P_j\\ P_1>z,~ P_j \cdots P_1 \le x/m\\y < P_j < \dots < P_1}} f(P_j) \cdots f(P_1)\\
&= \sumj \summ f(m) \sum_{\substack{P_2, \dots, P_j\\\ P_j \cdots P_2 \le x/mz\\y<P_j < \dots < P_2\\}} f(P_j) \cdots f(P_2) \sum_{\max\{P_2, z\}<P_1 \le x/m P_2 \cdots P_j} f(P_1).
\end{align*}
Here $\max\{P_2, z\}$ is to be replaced by $z$ in the case $j=1$.

Invoking hypothesis \eqref{eq:mainhypo} to estimate the innermost sum on $P_1$, we find that
\begin{equation}\label{eq:PostP1Removal}
\begin{split}
\sumxyzf = \varrho\sumj \frac1{(j-1)!} \summ f(m) \sum_{z<P_1 \le x/my^{j-1}} \sumPTwoj &f(P_2) \cdots f(P_j)\\ &+ O(Mx\err(y) \log x),
\end{split}   
\end{equation}
where we have observed that the total size of the resulting error term incurred upon an application of \eqref{eq:mainhypo} is
$$\ll Mx \err(y) \sumj \sum_{\substack{m, P_2, \dots, P_j\\mP_2 \cdots P_j \le x/z\\P^+(m) \le y < P_j < \dots < P_2}} \frac1{mP_2 \cdots P_j} \le Mx \err(y) \sum_{n \le x/z} \frac1n \ll Mx\err(y) \log x.$$
Now for $j \ge 2$ and each $i \in \{2, \dots, j\}$, estimate \eqref{eq:mainhypo} shows that
\begin{equation}\label{eq:SumfPi}
\begin{split}
\sum_{\substack{y<P_i \le x/mP_1 \cdots P_{i-1}P_{i+1} \cdots P_j \\ P_i<P_1, ~ r \ne i \implies P_r \ne P_i}} f(P_i) &= \sum_{\substack{y<P_i \le x/mP_1 \cdots P_{i-1}P_{i+1} \cdots P_j\\P_i < P_1}} f(P_i) + O(j)\\
&= \varrho\sum_{\substack{y<P_i \le x/mP_1 \cdots P_{i-1}P_{i+1} \cdots P_j\\P_i<P_1, ~ r \ne i \implies P_r \ne P_i}} 1 + O\left(j + \frac{Mx}{mP_1 \cdots P_{i-1}P_{i+1} \cdots P_j} \err(y)\right).
\end{split}
\end{equation}
For each $j \ge 2$, we use this estimate for $i \in \{2, \dots, j\}$ in order to successively remove the $f(P_2), \dots, f(P_j)$ occurring in the main term of \eqref{eq:PostP1Removal}. To this end, we define
$$\widetilde{\err} \coloneqq j \sumPTwojMO 1 + \frac{Mx \err(y)}{mP_1} \sumPTwojMO \frac1{P_2 \cdots P_{j-1}},$$
and write
\begin{align*}
\sumPTwoj f(P_2) \cdots f(P_j) &= \sum_{\substack{P_3, \dots, P_j \in (y, P_1)\\P_3, \dots, P_j \text{ distinct}\\P_3 \cdots P_j \le x/myP_1}} f(P_3) \cdots f(P_j) \sum_{\substack{y<P_2 \le x/mP_1 P_3 \cdots P_j \\ P_2<P_1, ~ r \ne 2 \implies P_2 \ne P_r}} f(P_2)\\
&= \varrho \sum_{\substack{P_2, \dots, P_j \in (y, P_1)\\P_2, \dots, P_j \text{ distinct}\\P_2 \cdots P_j \le x/mP_1}} f(P_3) \cdots f(P_j) + O(\widetilde{\err}),
\end{align*}
where in the last step we have noted that the error term resulting from the application of \eqref{eq:SumfPi} for $i=2$ is, by relabelling, equal to $\widetilde{\err}$. Likewise, invoking \eqref{eq:SumfPi} for $i = 3, \dots, j$, we obtain 
$$\sumPTwoj f(P_2) \cdots f(P_j) = \varrho^{j-1} \sumPTwoj 1 + O(j \widetilde{\err}).$$
Inserting this into \eqref{eq:PostP1Removal} for each $j \ge 2$ incurs a total error of size 
\begin{align*}\allowdisplaybreaks
&\ll \sum_{j \ge 2} \frac1{(j-1)!} \summ \sum_{z<P_1 \le \frac{x}{my^{j-1}}} \Bigg\{j^2 \sumPTwojMO 1 + \frac{Mxj \err(y)}{mP_1} \sumPTwojMO \frac1{P_2 \cdots P_{j-1}}\Bigg\}\\
&\ll (\log x)^2 \sum_{j \ge 2} \sum_{\substack{m, P_1, \dots, P_{j-1}\\mP_1 \cdots P_{j-1} \le x/y\\P_1>z, ~ P^+(m) \le y< P_{j-1} < \dots <P_1}} 1 ~ + ~ Mx (\log x) \err(y) \sum_{\substack{m, P_1, \dots, P_{j-1}\\mP_1 \cdots P_{j-1} \le x/y\\P_1>z, ~ P^+(m) \le y< P_{j-1} < \dots <P_1}} \frac1{m P_1 \cdots P_{j-1}}\\
&\ll (\log x)^2 \sum_{n \le x/y} 1 ~ + ~ M x (\log x) \err(y) \sum_{n \le x/y} \frac1n \ll Mx (\log x)^2 \left(\err(y) + \frac1y\right).
\end{align*}
As a consequence, we obtain
\begin{equation}\label{eq:PostP1PjRemoval}
\begin{split}
\sumxyzf &= \varrho\sumj \frac{\varrho^{j-1}}{(j-1)!} \summ f(m) \sum_{\substack{P_1, \dots, P_j\\P_1>z, ~ P_1 \cdots P_j \le x/m\\P_2 \cdots P_j \in (y, P_1) \text{ distinct}}} 1 + O\left(Mx (\log x)^2 \left(\err(y) + \frac1y\right)\right)\\
&= \summfm \sumj \sum_{\substack{P_1, \dots, P_j\\P_1>z, ~ P_1 \cdots P_j \le x/m\\y < P_j < \dots < P_1}} \varrho^j ~ + ~ O\left(Mx (\log x)^2 \left(\err(y) + \frac1y\right)\right).
\end{split}
\end{equation}

At this point, we observe that every squarefree positive integer $r \le x/m$ having $P^+(r) > z$ and $P^-(r)>y$ can be uniquely written in the form $P_j \cdots P_1$ with $P_1>z$ and $y<P_j < \dots < P_1$, and in this case $j = \Omega(r)$. As such, the main term in the final expression in \eqref{eq:PostP1PjRemoval} is equal to 
\begin{equation}\label{eq:MainTerm2}
\summfm \sum_{\substack{r \le x/m\\r\text{ squarefree}\\P^+(r)>z, ~ P^-(r)>y}} \varrho^{\Omega(r)} ~.     
\end{equation}
Ignoring the condition $P^+(r)>z$ incurs a total error of size
$$\ll \sum_{\substack{m \le x/z\\ P^+(m) \le y}} ~ ~ \sum_{\substack{r\le x/m\\ P^-(r)>y, \, P^+(r) \le z}} 1 \le \sum_{\substack{n \le x\\P^+(n) \le z}} 1 \le \Psi(x, z),$$
which is absorbed in the bound claimed in the theorem statement. Moreover, since any non-squarefree $r$ having $P^-(r)>y$ is divisible by the square of a prime exceeding $y$, ignoring the squarefreeness condition in \eqref{eq:MainTerm2} incurs a total error 
$$\ll \sum_{\substack{m \le x/z\\ P^+(m) \le y}} ~ \sum_{p>y} ~ \sum_{\substack{r\le x/m\\ p^2 \mid r}} 1 \ll x\sum_{\substack{m \le x/z\\ P^+(m) \le y}} \frac1m \sum_{p>y} \frac1{p^2} \ll \frac x{y \log y} \prod_{\ell \le y} \left(1+\sum_{v \ge 1} \frac1{\ell^v}\right) \ll \frac xy,$$
which is also absorbed in the claimed bound. Hence, up to a negligible error, the expression in \eqref{eq:MainTerm2} is equal to 
\begin{equation}\label{eq:MainTerm3}
\summfm \sum_{r \le x/m} \bbm_{P^-(r)>y} ~ \varrho^{\Omega(r)}.    
\end{equation}
In order to estimate the innermost sum, we shall be making use of the lemma below, which we will establish in the next section. 
\begin{lem}\label{lem:roughtwistedLiouville}
Let $X, Y, Z \ge e^{11/2}$ be positive real numbers satisfying $Y \le Z^{1/(18\log \log Z)^2}$ and $Z \le X$. We have for all $\beta \in \Udisk$,
$$\sum_{n \le X} \bbm_{P^-(n)>Y} ~ \beta^{\Omega(n)} = \frac X{(\log X)^{1-\beta}} \left\{\frac{e^{-\gamma \beta}}{\Gamma(\beta) (\log Y)^\beta}\left(1+O(\exp(-C_0\sqrt{\log y}))\right) + O\left(\frac{(\log Y)^{1+|\beta|}}{\log Z}\right)\right\},$$
where $C_0>0$ is an absolute constant. In particular,
$$\sum_{n \le X} \bbm_{P^-(n)>Y} ~ \beta^{\Omega(n)} \ll \frac X{(\log X)^{1-\Ree(\beta)}} \left\{\frac{|\beta|}{(\log Y)^{\Ree(\beta)}} + \frac{(\log Y)^{1+|\beta|}}{\log Z}\right\}.$$
The implied constants in the above estimates are absolute.
\end{lem}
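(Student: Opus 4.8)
The quantity to estimate is a Dirichlet-series-type sum weighted by $\beta^{\Omega(n)}$ over integers with no small prime factor, and the natural route is via the generating Dirichlet series and contour integration (a Selberg--Delange / Landau-type analysis). I would introduce
$$F(s) \coloneqq \sum_{\substack{n \ge 1\\P^-(n)>Y}} \frac{\beta^{\Omega(n)}}{n^s} = \prod_{p>Y} \left(1 - \frac{\beta}{p^s}\right)^{-1}, \qquad \Ree(s)>1,$$
and compare it with $\zeta(s)^\beta$. Writing $F(s) = \zeta(s)^\beta G_Y(s)$, where $G_Y(s) \coloneqq \prod_{p \le Y}(1-p^{-s})^\beta \cdot \prod_{p>Y}\bigl[(1-\beta p^{-s})^{-1}(1-p^{-s})^\beta\bigr]$, the second (infinite) product converges absolutely and is holomorphic and bounded for $\Ree(s) > 1/2$ (since the local factors are $1 + O(p^{-2s})$), while the finite product over $p \le Y$ contributes a controlled factor of size roughly $(\log Y)^{-\beta} e^{-\gamma\beta}/\Gamma(\beta+1)$ at $s=1$ — this is where the $e^{-\gamma\beta}(\log Y)^{-\beta}/\Gamma(\beta)$ shape emerges, via Mertens' theorem in the form $\prod_{p \le Y}(1-1/p) \sim e^{-\gamma}/\log Y$. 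I would record the needed estimates for $G_Y(s)$ near $s=1$: its value at $s=1$, and a bound of the form $G_Y(s) \ll (\log Y)^{|\beta|} \cdot (\text{something})$ on a zero-free-region contour, keeping careful track of the dependence on $Y$ since that is what produces the secondary error term $(\log Y)^{1+|\beta|}/\log Z$.

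**Extracting the partial sum.** With Perron's formula applied to $F(s) \cdot X^s/s$ and the contour pushed to a standard Vinogradov--Korobov (or even just classical) zero-free region of $\zeta$, the main contribution comes from the branch point at $s=1$: a Hankel-contour computation around $s=1$ gives the main term $\frac{X}{(\log X)^{1-\beta}} \cdot \frac{G_Y(1)}{\Gamma(\beta)}$, which after inserting the value of $G_Y(1)$ becomes $\frac{X}{(\log X)^{1-\beta}} \cdot \frac{e^{-\gamma\beta}}{\Gamma(\beta)(\log Y)^\beta}$. The error from the rest of the contour is $\ll X \exp(-C\sqrt{\log X})$ in the $Y$-independent part; the point is that the Hankel integral itself, truncated at a suitable distance, leaves behind a tail governed by the size of $G_Y$ on the contour, which is where the factor $(\log Y)^{1+|\beta|}/\log Z$ must be made to appear. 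Since the hypothesis only gives $Z \le X$ and $Y \le Z^{1/(18\log\log Z)^2}$, one should set the contour parameters (the height $T$ of the Perron truncation and the shape of the indentation around $1$) in terms of $\log Z$ rather than $\log X$, so that $\log Y$ is genuinely small — of size $O(\sqrt{\log Z}/\log\log Z)$ roughly — compared to the saving one extracts; this is what lets the $G_Y$-induced error be packaged as $(\log Y)^{1+|\beta|}/\log Z$. The relative error $O(\exp(-C_0\sqrt{\log y}))$ attached to the main term similarly comes from replacing the partial Euler product / Mertens estimate by its asymptotic value with an effective error, and from the precision in the Hankel-contour evaluation.

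**Main obstacle.** The delicate point is \emph{uniformity in $\beta \in \Udisk$ and honest control of the $Y$-dependence simultaneously}. When $\Ree(\beta)$ is close to $1$ (so $\zeta^\beta$ has a genuine pole-like singularity), $\Gamma(\beta)$ in the denominator is fine, but when $\beta$ is near $0$ the main term degenerates and one must check the claimed formula still holds (both sides tend to the contribution of $n=1$); when $\beta$ is near $-1$ the factor $(\log X)^{1-\beta}$ can be as large as $(\log X)^2$ and $\Gamma(\beta)$ has a pole, so the "main term" can legitimately be smaller than the error term — the statement is then vacuously true, but one should make sure nothing blows up in the intermediate estimates. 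Keeping all implied constants absolute (independent of $\beta$) through the Hankel-contour computation, the zero-free-region bounds for $\zeta$ and $1/\zeta$, and the Mertens-type estimate for $G_Y(1)$ is the bookkeeping-heavy heart of the argument. A secondary technical nuisance is justifying that the contour shift is legitimate given that $F(s)$, as a function built from $\zeta^\beta$ with $\beta$ complex, requires a consistent choice of branch of $\zeta^\beta$ on the slit plane — standard, but it must be stated. Finally, I would derive the second ("in particular") display simply by taking absolute values in the first, using $|e^{-\gamma\beta}| = e^{-\gamma\Ree(\beta)}$, $|(\log Y)^\beta| = (\log Y)^{\Ree(\beta)}$, $|(\log X)^{1-\beta}| = (\log X)^{1-\Ree(\beta)}$, and the crude bound $1/|\Gamma(\beta)| \ll |\beta|$ valid for $\beta \in \Udisk$ (since $\Gamma(\beta) = \Gamma(\beta+1)/\beta$ and $\Gamma(\beta+1)$ is bounded away from $0$ and $\infty$ on the unit disk).
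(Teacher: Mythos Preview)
Your approach is correct and is essentially the same as the paper's: both proceed via the Selberg--Delange method applied to the same factorization $F(s)=\zeta(s)^\beta G_Y(s)$, with the crucial observation that $|G_Y(s)|\ll(\log Y)^{|\beta|}$ uniformly for $\Ree(s)\ge 1-1/\log Y$, and the second display follows exactly as you say via $1/|\Gamma(\beta)|\ll|\beta|$ on $\Udisk$. The only substantive difference is that the paper does not carry out the Perron--Hankel contour argument by hand but instead invokes a black-box version of Selberg--Delange due to Chang and Martin (their Theorem~A.13, quoted as Proposition~\ref{prop:CM}), which is explicit in the parameters $c_0=1/\log Y$, $\delta=1$, $M=C_0(\log Y)^{|\beta|}$; the resulting error $MR(X)$ then produces directly the term $(\log Y)^{1+|\beta|}/\log X\le(\log Y)^{1+|\beta|}/\log Z$ together with a term $(\log Y)^{3+|\beta|}\exp\bigl(-\tfrac16\sqrt{\log X/\log Y}\bigr)$ which the hypothesis $Y\le Z^{1/(18\log\log Z)^2}$ absorbs into the first. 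Your plan to set contour parameters ``in terms of $\log Z$'' would achieve the same effect, though in the paper's packaging the role of $Z$ enters only at the very end, after the contour work (hidden inside the black box) has already been done with $X$ and $Y$ alone.
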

Applying the above lemma with $X \coloneqq x/m$, $Y \coloneqq y$, $Z \coloneqq z$ and $\beta \coloneqq \varrho$, we find that the expression in \eqref{eq:MainTerm3} is 
\begin{align*}
&\ll \sum_{\substack{m \le x/z\\P^+(m) \le y}} |f(m)| \cdot \frac{x/m}{(\log (x/m))^{1-\Ree(\varrho)}} \left\{\frac{|\varrho|}{(\log y)^{\Ree(\varrho)}} + \frac{(\log y)^{1+|\varrho|}}{\log z}\right\}\\
&\ll \frac x{(\log z)^{1-\Ree(\varrho)}} \left(\sum_{m: \, P^+(m) \le y} \frac{|f(m)|}m \right) \left\{\frac{|\varrho|}{(\log y)^{\Ree(\varrho)}} + \frac{(\log y)^{1+|\varrho|}}{\log z}\right\}.
\end{align*}
Finally, since $|f(n)| \le 1$ for all $n$, the sum on $m$ is $\ll \exp(\sum_{p \le y} |f(p)|/p)$, yielding the estimate in the theorem. This completes the proof of Theorem \ref{thm:mainbound}, up to the proof of Lemma \ref{lem:roughtwistedLiouville}.
\section{Proof of Lemma \ref{lem:roughtwistedLiouville}: the Landau--Selberg--Delange method}
A comprehensive account of the method of Landau--Selberg--Delange may be found in Tenenbaum \cite[Chapter II.5]{tenenbaum15}. However, we shall be using a recent formulation of this method due to Chang and Martin \cite{CM20}. This is based on Tenenbaum's treatment but is more explicit in the dependence on certain parameters, a feature that shall be crucial in our current application. 
\subsection{Setup} In what follows, we write complex numbers $s$ as $s=\sigma+i t$, where $\sigma \coloneqq \Ree(s)$ and $t \coloneqq \mathrm{Im}(s)$. (This convention is relevant only for this section and the use of $\sigma$ will not create any confusion with the notation for the sum-of-divisors function.) For a non-negative $u$, we use $\log^{+}{u}$ to denote the quantity $\max\{0,\log{u}\}$ (the positive part of $\log u$), with the convention that $\log^{+}{0}=0$. 

Given $\delta \in (0, 1]$, a complex number $z$, and positive real numbers $c_0$ and $M$, we say that the Dirichlet series $F(s)$ has \textsf{property $\mathcal P(z; c_0, \delta, M)$} if the function
$$G(s; z):= F(s)\zeta(s)^{-z}$$
satisfies the following two conditions:
\begin{enumerate}
\item[(i)] $G(s; z)$ continues analytically into the region $\sigma \ge 1-c_0/(1+\log^{+}|t|)$, and
\item[(ii)] $|G(s; z)| \leq M(1+|t|)^{1-\delta}$
for all $s$ in this same region. 
\end{enumerate} 

Given complex numbers $z$ and $w$, along with positive numbers $c_0$, $M$ and $\delta \in (0, 1]$, we say that a Dirichlet series $F(s) := \sum_{n=1}^\infty a_n n^{-s}$ has \textsf{type $\mathcal T(z, w; c_0, \delta, M)$} if:
\begin{enumerate}
\item[(i)] $F(s)$ has property $\mathcal P(z; c_0, \delta, M)$, and 
\item[(ii)] there is a sequence $\{b_n\}_{n=1}^\infty$ of nonnegative real numbers satisfying $|a_n| \leq b_n$ for all $n$, such that the Dirichlet series $\sum_{n=1}^\infty b_nn^{-s}$ has property $\mathcal P(w; c_0, \delta, M)$. 
\end{enumerate}

The following is the special case  of Theorem A.13 in \cite{CM20} with $N = 0$.
\begin{prop}\label{prop:CM} Fix $A \ge 1$ and let $z, w$ be complex numbers satisfying $|z|, |w|\le A$. Let $c_0, \delta, M$ be positive real numbers with $c_0 \le 2/11$ and $\delta \le 1$. Let $F(s)= \sum_{n=1}^{\infty} a_n/n^s$ be a Dirichlet series of type $\mathcal T(z, w; c_0, \delta, M)$. Then, uniformly for $x\ge \exp(8^{1/A} \max\{\delta^{-1-1/A}, 2/c_0\})$,
we have 
\[ \sum_{n \le x} a_n = \frac x{(\log{x})^{1-z}}\left(\frac{G(1;z)}{\Gamma(z)} + O(M R(x))\right),\]
where
\[ R(x) = \left(\frac1{\delta^{2A+3/2}} + \frac1{c_0^{2A+1}}\right) \exp\left(-\frac{1}{6}\sqrt{c_0 \delta \log{x}}\right) + \frac{1}{c_0\log{x}}\]
and the implied constant depends at most on $A$.
\end{prop}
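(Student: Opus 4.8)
Since the assertion is stated verbatim as the $N=0$ instance of Theorem~A.13 of \cite{CM20}, it suffices to quote that reference; but to reconstruct it one would run the Landau--Selberg--Delange method (cf.\ \cite[Chapter~II.5]{tenenbaum15}), tracking the dependence on $c_0$, $\delta$, $M$ and $A$ throughout. The plan is to start from a truncated Perron formula
\[ \sum_{n \le x} a_n = \frac{1}{2\pi i} \int_{\kappa - iT}^{\kappa + iT} F(s)\, \frac{x^s}{s}\, ds + (\text{truncation error}), \qquad \kappa \coloneqq 1 + \frac{1}{\log x}, \]
with $T \le x$ a parameter to be fixed only at the very end. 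The truncation error would be handled using the majorising series $\sum_n b_n n^{-s}$ supplied by the definition of type $\mathcal T(z,w;c_0,\delta,M)$: since it has property $\mathcal P(w;c_0,\delta,M)$ with $|w| \le A$, it grows like $\zeta(s)^w$ near $s=1$, so $\sum_n b_n/n^\kappa \ll_A M(\log x)^A$, and the standard effective Perron bound then yields a truncation error of shape $x(\log x)^{O_A(1)}/T$ (together with the usual contribution of $n$ close to $x$), which will be acceptable for the eventual choice of $T$.

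Next I would insert the factorisation $F(s) = \zeta(s)^z\, G(s;z)$ and write $\zeta(s)^z = (s-1)^{-z}\bigl((s-1)\zeta(s)\bigr)^z$, the second factor being holomorphic and equal to $1$ at $s=1$. Using property~$\mathcal P$ to remain inside the region $\sigma \ge 1 - c_0/(1+\log^{+}|t|)$ in which $G(s;z)$ is holomorphic and bounded by $M(1+|t|)^{1-\delta}$, I would deform the vertical segment into a truncated Hankel contour: two horizontal pieces at heights $\pm T$, the two arcs of the boundary curve $\sigma = 1 - c_0/(1+\log^{+}|t|)$ descending towards $s=1$, and a small loop of radius $\asymp 1/\log x$ encircling $s=1$ with a slit along $(-\infty,1]$. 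On the loop, the substitution $s = 1 + w/\log x$, the Taylor expansion of $\bigl((s-1)\zeta(s)\bigr)^z G(s;z)/s$ about $s=1$ (retaining only the constant term $G(1;z)$, as $N=0$), and Hankel's formula $\frac{1}{2\pi i}\int w^{-z} e^{w}\, dw = 1/\Gamma(z)$ together yield the main term $\frac{x}{(\log x)^{1-z}}\cdot\frac{G(1;z)}{\Gamma(z)}$; the discarded higher-order Taylor coefficients (each bounded in terms of $M$ and a radius comparable to $c_0$) and the finite loop radius contribute $\ll M/(c_0\log x)$, which is the source of the $\frac{1}{c_0\log x}$ term in $R(x)$.

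The substantive part of the argument would be bounding the horizontal pieces and the two boundary arcs. On each one uses condition~(ii), $|G(s;z)| \le M(1+|t|)^{1-\delta}$, together with crude polynomial-in-$\log(|t|+2)$ bounds for $|\zeta(s)^z|$ just to the right of the classical zero-free region. Along the arcs one has $x^{\sigma} \le x^{1 - c_0/(1+\log^{+}|t|)}$, and integrating $x^{1 - c_0/(1+\log^{+}|t|)}(1+|t|)^{-\delta}(\log(|t|+2))^{O_A(1)}\, dt/|t|$, then optimising over the height at which the decay $\exp\bigl(-c_0\log x/(1+\log^{+}|t|)\bigr)$ meets the polynomial growth, produces a bound of the form $\bigl(\delta^{-2A-3/2} + c_0^{-2A-1}\bigr)\, x\, \exp\bigl(-\tfrac16\sqrt{c_0\delta\log x}\bigr)$; the horizontal pieces contribute a term of at most this order once $T$ is chosen of size $\exp\bigl(\Theta(\sqrt{c_0\delta\log x})\bigr)$. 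Dividing through by $x/(\log x)^{1-z}$ then gives the stated $R(x)$. Here the hypothesis $x \ge \exp\bigl(8^{1/A}\max\{\delta^{-1-1/A}, 2/c_0\}\bigr)$ is exactly what forces $\log x$ to dominate the length scales $1/c_0$ and $\delta^{-1-1/A}$, so that the Hankel loop of radius $\asymp 1/\log x$ fits inside the region of holomorphy and the above optimisations over $|t|$ and $\sigma$ are legitimate.

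I expect the main obstacle to be not any single estimate but the \emph{uniform, fully explicit} bookkeeping: carrying the precise powers of $c_0$, $\delta$ and $M$ through every piece of the contour so as to land on $R(x) = \bigl(\delta^{-2A-3/2} + c_0^{-2A-1}\bigr)\exp\bigl(-\tfrac16\sqrt{c_0\delta\log x}\bigr) + (c_0\log x)^{-1}$ with exactly the constant $\tfrac16$ in the exponent, which is pinned down jointly by the choice of $T$ and of the truncation point of the Hankel contour, and in checking that all the contour manipulations respect the precise region of holomorphy guaranteed by property~$\mathcal P$ rather than a slightly smaller or larger one. Since \cite{CM20} carries out exactly this in full, one may simply invoke their Theorem~A.13.
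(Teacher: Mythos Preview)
Your proposal is correct and matches the paper's approach exactly: the paper simply states that this is the special case $N=0$ of Theorem~A.13 in \cite{CM20} and gives no further proof. Your sketch of the Landau--Selberg--Delange reconstruction is accurate supplementary detail, but the citation alone suffices.
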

Here we have corrected some typos in \cite{CM20}; the expression for $R(x)$ there has an extra factor of $M$ throughout as well as an extra factor of $x$ in its first term.

\subsection{Proof of Lemma \ref{lem:roughtwistedLiouville}.} We claim that the Dirichlet series $F(s) \coloneqq \sum_{n \ge 1} \bbm_{P^-(n)>Y} ~ \beta^{\Omega(n)}/n^s$ is of type $\mathcal T(\beta, |\beta|; 1/\log Y, 1, C_0 (\log Y)^{|\beta|})$ for some absolute constant $C_0>0$. Indeed, in the half plane $\sigma>1$, we find that 
\begin{equation}\label{eq:G(s)EuelerProd}
G(s) \coloneqq F(s) \zeta(s)^{-\beta} = \prod_{p \le Y} \left(1-\frac1{p^s}\right)^{\beta} \prod_{p > Y} \left(1-\frac1{p^s}\right)^{\beta} \left(1-\frac{\beta}{p^s}\right)^{-1},
\end{equation}
and in the same half plane
\begin{equation}\label{eq:logG(s)}
\log G(s) = \beta\sum_{p \le Y} \log\left(1-\frac1{p^s}\right) + \sum_{p>Y} \left\{\beta\log\left(1-\frac1{p^s}\right) - \log\left(1-\frac{\beta}{p^s}\right)\right\}.
\end{equation}
Now since $Y \ge e^{11/2}$, we see that if $\sigma \ge 1-1/\log Y$, then $\sigma \ge 9/11$, so that $|\beta/p^s| \le 2^{-\sigma} \le 0.57$. Consequently for all such $s$, the second sum in \eqref{eq:logG(s)} is 
$$\sum_{p>Y} \left\{\beta\left(-\frac1{p^s} + O\left(\frac1{p^{2\sigma}}\right)\right) + \left(\frac\beta{p^s} + O\left(\frac1{p^{2\sigma}}\right)\right)\right\} \ll \sum_{p>Y} \frac1{p^{2\sigma}} \le \sum_{p>Y} \frac1{p^{18/11}} \ll 1.$$
This shows that the second sum in \eqref{eq:logG(s)} converges absolutely and uniformly in the half plane $\sigma \ge 1-1/\log Y$, thus defining a holomorphic function in the same region. Furthermore, for all $s$ in this half plane, we have 
\begin{align*}
\log|G(s)| \le |\log G(s)| &\le |\beta| \sum_{p \le Y} \frac1{p^\sigma} + O(1) \le |\beta| \sum_{p \le Y} \frac1p \exp\left(\frac{\log p}{\log Y}\right) + O(1)\\
&= |\beta|\sum_{p \le Y} \frac1p + O\left(1+\frac1{\log Y}\sum_{p \le Y} \frac{\log p}p\right) = |\beta| \log_2 Y + O(1). 
\end{align*}
We deduce that $|G(s)| \le C_0 (\log Y)^{|\beta|}$ for all $\sigma \ge 1-1/\log Y$, which shows that $F(s)$ has property $\mathcal P(\beta; 1/\log Y, 1, C_0 (\log Y)^{|\beta|})$. By invoking this very observation with $|\beta|$ in place of $\beta$, we find that the Dirichlet series $\sum_{n \ge 1} |\beta|^{\Omega(n)}/n^s$ has property $\mathcal P(|\beta|; 1/\log Y, 1, C_0 (\log Y)^{|\beta|})$, which establishes our claim. 

An application of Proposition \ref{prop:CM} with $A = 1$ now shows that for all $X \ge Y^{16} = \exp(16 \log Y)$, we have 
\begin{equation*}
\sum_{n \le X} \bbm_{P^-(n)>Y} \beta^{\Omega(n)} = \frac X{(\log X)^{1-\beta}} \left\{\frac{G(1)}{\Gamma(\beta)} + O\left(\frac{(\log Y)^{1+|\beta|}}{\log X} + (\log Y)^{3+|\beta|} \exp\left(-\frac16\sqrt{\frac{\log X}{\log Y}}\right)\right)\right\}.
\end{equation*}
For $Y \le Z^{1/(18\log_2 Z)^2}$, we have $2\log_2 Y + \log_2 Z \le 3\log_2 Z \le \frac16\sqrt{\frac{\log Z}{\log Y}}$, which shows that 
\begin{equation}\label{eq:LSDApp}
\sum_{n \le X} \bbm_{P^-(n)>Y} \beta^{\Omega(n)} = \frac X{(\log X)^{1-\beta}} \left\{\frac{G(1)}{\Gamma(\beta)} + O\left(\frac{(\log Y)^{1+|\beta|}}{\log Z}\right)\right\}.
\end{equation}
Finally, 
$$\sum_{p>Y} \left\{\beta\log\left(1-\frac1p\right) - \log\left(1-\frac\beta p\right)\right\} \ll \sum_{p>Y} \frac1{p^2} \ll \frac1{Y\log Y},$$
and by the prime number theorem (with the usual de la Vall\'ee Poussin error term), we have
$$\prod_{p \le Y} \left(1-\frac1p\right)^\beta = \frac{e^{-\gamma \beta}}{(\log Y)^\beta}\left(1+O(\exp(-C_0\sqrt{\log Y}))\right),$$
for some absolute constant $C_0>0$. This shows that 
$$G(1) = \prod_{p \le Y} \left(1-\frac1p\right)^\beta \prod_{p > Y} \left(1-\frac1p\right)^{\beta} \left(1-\frac\beta p\right)^{-1} = \frac{e^{-\gamma \beta}}{(\log Y)^\beta}\left(1+O(\exp(-C_0\sqrt{\log Y}))\right)$$
which from \eqref{eq:LSDApp} yields the first of the claimed estimates in Lemma \ref{lem:roughtwistedLiouville}. Since the Gamma function has no zeros in the complex plane, we have $|\Gamma(s)| \gg 1$ for all $s$ in a fixed compact region. As a consequence, $\beta\Gamma(\beta) = \Gamma(1+\beta) \gg 1$ for all $|\beta| \le 1$, yielding the second assertion of the lemma. 
\section{Distribution of the sum-of-divisors to odd moduli: Proof of Theorem \ref{thm:sigmaodd}}\label{sec:ThmsigmaoddProof}
In the rest of this section, we abbreviate $\alpha(q)$ to $\alpha$. We shall make frequent use of the fact that $\alpha = \prod_{\ell \mid q} (1-1/(\ell-1)) \gg 1/\log_2 (3q)$ for all odd $q$. The following proposition allows us to give a rough estimate on the count of $n \le x$ for which $\sigma(n)$ is coprime to $q$, uniformly in odd moduli $q \le (\log x)^K$.  
\begin{prop}\label{prop:MathZProp2.1}
Fix $K>0$ and a multiplicative function $f$ for which there exists a non-constant polynomial $F \in \Z[T]$ satisfying $f(p)=F(p)$ for all primes $p$. If $x$ is sufficiently large and $q\le (\log{x})^{K}$ with $\alpha_F(q) \coloneqq \frac1{\phi(q)} \#\{u \bmod q: (uF(u), q)=1\}> 0$, then
\begin{equation}\label{eq:scourfieldestimate} \#\{n\le x: (f(n),q)=1\} = \frac{x}{(\log{x})^{1-\alpha_F(q)}} \exp(O((\log\log{(3q)})^{O(1)})). \end{equation}
\end{prop}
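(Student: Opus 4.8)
The plan is to apply Theorem~\ref{thm:mainbound} to the multiplicative function $g \coloneqq \bbm_{(f(\cdot), q)=1}$, whose value at a prime power $p^a$ is $1$ if $(F(p), q)=1$ (and $p \nmid q$) and $0$ otherwise. Here $g$ takes values in $\{0, 1\} \subset \Udisk$, so the hypotheses on the codomain are met. The first task is to verify the average-at-primes hypothesis \eqref{eq:mainhypo} for $g$. Since $g(p) = \bbm_{(pF(p), q)=1}$ depends only on $p \bmod q$, we can write $\sum_{y < p \le Y} g(p)$ by sorting primes into residue classes mod $q$; by the Siegel--Walfisz theorem (or rather a Siegel--Walfisz-type prime-counting estimate valid uniformly for $q \le (\log Y)^K$, which is where the restriction $q \le (\log x)^K$ enters), each coprime class $u \bmod q$ with $(uF(u), q)=1$ contributes $\sim (\pi(Y)-\pi(y))/\phi(q)$, so
$$\sum_{y<p\le Y} g(p) = \alpha_F(q)\,(\pi(Y)-\pi(y)) + O\!\left(M Y \err(y)\right),$$
with $\varrho = \alpha_F(q) \in [0,1]$, $M$ a suitable power of $\log(3q)$, and $\err(y) = \exp(-c\sqrt{\log y})$ coming from the de~la~Vall\'ee~Poussin / Siegel--Walfisz error term. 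Note $\varrho$ is real and nonnegative here, which simplifies the bound of Theorem~\ref{thm:mainbound} since $\Ree(\varrho) = |\varrho| = \alpha_F(q)$.

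Next I would choose the parameters $y, z$ in Theorem~\ref{thm:mainbound}. A natural choice is to take $y$ a fixed power of $\log x$ (or a slowly growing function like $\exp((\log_2 x)^2)$) and $z$ a small power of $x$, say $z = x^{1/\log_2 x}$ or similar, so that $\Psi(x, z)$ is negligible against $x/(\log x)^{1-\alpha_F(q)}$ (using standard smooth-number bounds, e.g. $\Psi(x,z) \ll x \exp(-u/2)$ with $u = \log x/\log z$), and simultaneously the constraint $y \le z^{1/(18\log_2 z)^2}$ holds. With these choices the error term $Mx(\log x)^2(\err(y) + 1/y)$ is also negligible, as is the second main term $x(\log y)^{1+|\varrho|}/(\log z)^{2-\Ree(\varrho)}\cdot\expfpp$ provided $z$ is large enough. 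The surviving main contribution is
$$\frac{|\varrho| x}{\log z}\left(\frac{\log z}{\log y}\right)^{\Ree(\varrho)} \expfpp \asymp \frac{x}{(\log x)^{1-\alpha_F(q)}} \cdot (\log y)^{-\alpha_F(q)} \cdot \expfpp,$$
after noting $\log z \asymp \log x$. Since $g(p) \in \{0,1\}$, $\sum_{p \le y} g(p)/p \le \sum_{p\le y} 1/p = \log_2 y + O(1)$, so $\expfpp \le (\log y)^{1+o(1)}$; this together with the factor $(\log y)^{-\alpha_F(q)}$ and the fact that $\log y \asymp \log_2 x$ (or a bounded power thereof) contributes only $\exp(O((\log_2(3q))^{O(1)}))$-type fluctuations once one tracks the $q$-dependence carefully --- more precisely, the true order of $\sum_{p\le y, (pF(p),q)=1} 1/p$ is $\alpha_F(q)\log_2 y + O(\log_2(3q))$ by Mertens in arithmetic progressions, so $\expfpp \asymp (\log y)^{\alpha_F(q)} \exp(O(\log_2(3q)))$, which exactly cancels the $(\log y)^{-\alpha_F(q)}$ and leaves the clean upper bound $x(\log x)^{\alpha_F(q)-1}\exp(O((\log_2(3q))^{O(1)}))$.

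This gives the upper bound half of \eqref{eq:scourfieldestimate}. For the matching lower bound --- which Theorem~\ref{thm:mainbound} alone does not supply, since it is only an upper estimate --- I would instead invoke the results of Scourfield~\cite{scourfield85} (Theorems A and B there), or equivalently \cite[Proposition 2.1]{PSR22}, which provide a genuine asymptotic (with $q$-uniform secondary terms) for $\#\{n \le x: (f(n), q)=1\}$ of exactly the shape $x(\log x)^{\alpha_F(q)-1}$ times a constant that is $\exp(O((\log_2(3q))^{O(1)}))$; combining the lower bound from that asymptotic with the upper bound just derived yields \eqref{eq:scourfieldestimate}. The main obstacle is bookkeeping the dependence on $q$ throughout: one must keep track of $M$, of the Siegel--Walfisz error term's uniformity in $q \le (\log x)^K$ (a fixed-$K$ statement, hence harmless but needs stating), and above all of the size of $\sum_{p \le y, (pF(p), q)=1} 1/p$ so that the $\expfpp$ factor is pinned down to within an $\exp(O((\log_2(3q))^{O(1)}))$ factor --- this last point is what turns the raw output of Theorem~\ref{thm:mainbound} into the clean form claimed, and requires a version of Mertens's theorem for the arithmetic progressions $u \bmod q$ with $(uF(u),q)=1$, uniform in $q$.
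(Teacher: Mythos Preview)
The paper does not prove this proposition at all: it simply states ``This is Proposition 2.1 in \cite{PSR23}; more precise results appear in work of Scourfield \cite{scourfield84, scourfield85}.'' So the intended ``proof'' is a bare citation.

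Your attempt to derive the upper bound from Theorem~\ref{thm:mainbound} is therefore extra work, and it contains a slip that actually prevents it from yielding the stated bound. You take $z = x^{1/\log_2 x}$ (correctly, to make $\Psi(x,z)$ negligible), but then assert ``$\log z \asymp \log x$''. This is false: $\log z = (\log x)/\log_2 x$. Consequently the main term of Theorem~\ref{thm:mainbound} becomes
\[
\frac{\alpha_F(q)\,x}{(\log z)^{1-\alpha_F(q)}}\exp\!\big(O((\log_2(3q))^{O(1)})\big)
= \frac{x}{(\log x)^{1-\alpha_F(q)}}\cdot (\log_2 x)^{1-\alpha_F(q)}\cdot\exp\!\big(O((\log_2(3q))^{O(1)})\big),
\]
and the factor $(\log_2 x)^{1-\alpha_F(q)}$ cannot be absorbed into $\exp(O((\log_2(3q))^{O(1)}))$ when $q$ is bounded (say $q=3$), since then the latter is $O(1)$ while the former tends to infinity. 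You cannot fix this by taking $z$ to be a genuine fixed power of $x$, since then $\Psi(x,z)\asymp x$ swamps the main term whenever $\alpha_F(q)<1$. So Theorem~\ref{thm:mainbound} alone is not sharp enough here.

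That said, your final paragraph already names the correct route: Scourfield \cite{scourfield84,scourfield85} (or \cite[Proposition~2.1]{PSR22}) gives a genuine two-sided asymptotic of exactly the required shape, and once you invoke that there is nothing left to do---no need for the Theorem~\ref{thm:mainbound} detour at all. That is precisely the paper's approach, just with \cite{PSR23} in place of \cite{PSR22}/Scourfield as the primary citation.
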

This is Proposition 2.1 in \cite{PSR23}; more precise results appear in work of Scourfield \cite{scourfield84, scourfield85}. By the above proposition, we obtain 
\begin{equation}\label{eq:sigmaqcoprim}
\#\{n\le x: (\sigma(n),q)=1\} = \frac{x}{(\log{x})^{1-\alpha}} \exp(O((\log\log{(3q)})^{O(1)})), 
\end{equation}
uniformly in odd $q \le (\log x)^K$. Since $\alpha \gg 1/\log_2(3q)$, this shows that the second of the two assertions of Theorem \ref{thm:sigmaodd} is an immediate consequence of the first, so we need to show the first assertion of the theorem. For this purpose, we shall also need the following estimate \cite[Lemma 2.4]{PSR23} on the sum of reciprocals of the primes at which a given polynomial is coprime to a modulus $q$ that varies in a wide range.
\begin{lem}\label{lem:primesum} Let $F(T)\in \Z[T]$ be a fixed nonconstant polynomial. For each positive integer $q$ and each real number $x\ge 3q$, \[ \sum_{p \le x} \frac{\1_{\gcd(F(p),q)=1}}{p} = \alpha _F(q)\log_2{x} + O((\log\log{(3q)})^{O(1)}),  \]
where $\alpha_F(q)$ is as defined in Proposition \ref{prop:MathZProp2.1}.
\end{lem}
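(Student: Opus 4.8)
The plan is to reduce the claim to the prime number theorem for arithmetic progressions in a range of moduli allowed by the Siegel--Walfisz theorem, after sorting primes $p$ by the residue class $p \bmod q$. The key observation is that the condition $\gcd(F(p), q) = 1$ depends only on $p \bmod q$: writing $\mathcal{U} \coloneqq \{u \bmod q : \gcd(uF(u), q) = 1\}$, a prime $p \nmid q$ satisfies $\gcd(F(p), q) = 1$ precisely when $p \bmod q$ lies in the set $\{u \bmod q : \gcd(F(u), q)=1\}$, which (up to the finitely many primes dividing $q$, whose reciprocals contribute $O(\log_2(3q))$, comfortably absorbed into the error term) is the same as requiring $p \bmod q \in \mathcal{U}$ since $\gcd(p, q) = 1$ automatically. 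Thus
\[
\sum_{p \le x} \frac{\1_{\gcd(F(p), q)=1}}{p} = \sum_{u \in \mathcal{U}} \sum_{\substack{p \le x \\ p \equiv u \bmod q}} \frac 1p + O(\log_2(3q)).
\]

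Next I would invoke Mertens-type estimates for primes in progressions. For a fixed residue class $u \bmod q$ with $\gcd(u, q) = 1$, one has
\[
\sum_{\substack{p \le x \\ p \equiv u \bmod q}} \frac 1p = \frac{1}{\phi(q)} \log_2 x + M(q, u) + O\!\left(\text{small}\right),
\]
where the error is acceptable uniformly for $q \le (\log x)^{K'}$ (or even a fixed power) by partial summation against the Siegel--Walfisz form of the prime number theorem for progressions; since here $x \ge 3q$ only, one must instead use an unconditional Mertens estimate for progressions valid in the full range, e.g. the Landau--Page-type bound, which gives the same main term $\frac{1}{\phi(q)}\log_2 x$ with a constant $M(q,u) = O(\log_2(3q))$ (this constant is $O((\log\log 3q)^{O(1)})$ — in fact $O(\log_2 3q)$ — uniformly; it can be bounded crudely via $\sum_{p \mid q} 1/p \ll \log_2(3q)$ together with the contribution of the exceptional zero, which is itself $O(\log_2(3q))$). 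Summing over the $\#\mathcal{U} = \phi(q)\,\alpha_F(q)$ classes $u \in \mathcal{U}$ yields the main term $\alpha_F(q) \log_2 x$, and the accumulated secondary terms are $O(\phi(q) \cdot \log_2(3q)) $ — wait, that is too large; here one must be more careful and note that the secondary constants $M(q,u)$, when summed over \emph{all} $u$ coprime to $q$, telescope against $\sum_{p \nmid q} 1/p - \frac{\phi(q)}{\phi(q)}\log_2 x$, so summing over the sub-sum $u \in \mathcal{U}$ and bounding the remainder trivially by the full sum gives a bound of $O((\log\log 3q)^{O(1)})$ for the total secondary contribution. This is exactly the shape claimed.

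The main obstacle is the uniformity in $q$ in the range $q \le (\log x)^K$ combined with the weak hypothesis $x \ge 3q$: the Siegel--Walfisz theorem only gives a useful error term when $q$ is at most a fixed power of $\log x$, which is fine for the application but for $x$ merely $\ge 3q$ one genuinely needs an \emph{unconditional} estimate for $\sum_{p \le x, p \equiv u (q)} 1/p$ valid for all $q \le x^{1/2}$ or so. The cleanest route is to cite the classical fact (going back to Mertens, with the uniform version due to Pollack or available in Montgomery--Vaughan) that $\sum_{p \le x, \, p \equiv u \bmod q} 1/p = \frac{1}{\phi(q)} \log_2 x + O(\log_2(3q))$ holds uniformly for all $q \ge 1$ and $x \ge 3q$ and $\gcd(u,q)=1$ — this follows from Mertens' theorem for $\sum_{p\le x} 1/p$ on the one hand and the explicit Mertens-type formula in progressions on the other, where the $O(\log_2 3q)$ absorbs both the primes dividing $q$ and any Siegel zero contribution. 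Granting this, the lemma is immediate; since it is quoted verbatim from \cite[Lemma 2.4]{PSR23}, the expectation is that this is precisely the argument given there, and our role is merely to reference it.
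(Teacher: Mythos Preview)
The paper does not prove this lemma; it is quoted directly from \cite[Lemma~2.4]{PSR23}. So the question is whether your sketch actually delivers the stated error $O\bigl((\log_2(3q))^{O(1)}\bigr)$. It does not.

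Your approach sorts primes by their residue class modulo $q$ and applies a Mertens-type estimate to each of the $\#\mathcal U=\phi(q)\alpha_F(q)$ classes. The sharpest uniform estimate available here (Norton/Pomerance) is
\[
\sum_{\substack{p\le x\\p\equiv u\ (q)}}\frac1p=\frac{\log_2 x}{\phi(q)}+O\!\left(\frac{\log(2q)}{\phi(q)}\right),
\]
so summing over $u\in\mathcal U$ produces a total error of size $O(\alpha_F(q)\log q)=O(\log q)$, which is \emph{not} $(\log_2(3q))^{O(1)}$. You sense this (``wait, that is too large'') and try to fix it by appealing to the fact that $\sum_{u\in U_q}M(q,u)=O(\log_2 q)$; but that controls only the \emph{full} sum of the constants, not the partial sum over $\mathcal U$. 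The $M(q,u)$ have both signs, and a sub-sum over $\mathcal U$ can be as large as $\phi(q)\cdot\max_u|M(q,u)|$, so no telescoping is available. The bound $M(q,u)=O(\log_2(3q))$ you first assert is also not known in general.

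The argument that actually yields the claimed error uses M\"obius over squarefree divisors of $q$ rather than residues mod $q$:
\[
\sum_{p\le x}\frac{\1_{(F(p),q)=1}}{p}
=\sum_{d\mid\mathrm{rad}(q)}\mu(d)\sum_{\substack{p\le x\\ d\mid F(p)}}\frac1p.
\]
For each squarefree $d\mid q$ the inner condition places $p$ in $\rho^*(d)\le D^{\omega(d)}$ coprime classes mod $d$ (with $D=\deg F$), and Norton's estimate with modulus $d$ gives main term $\rho^*(d)\log_2 x/\phi(d)$ and error $O(\rho^*(d)\log(2d)/\phi(d))$. The main terms multiply out to $\alpha_F(q)\log_2 x$, while the accumulated error is
\[
\sum_{d\mid\mathrm{rad}(q)}\frac{D^{\omega(d)}\log(2d)}{\phi(d)}
\ll_D \prod_{\ell\mid q}\Bigl(1+\frac{D}{\ell-1}\Bigr)\cdot\Bigl(1+\sum_{\ell\mid q}\frac{\log\ell}{\ell}\Bigr)
\ll_D (\log_2(3q))^{D+1},
\]
using the standard bounds $\prod_{\ell\mid q}(1+D/(\ell-1))\le(\mathrm{rad}(q)/\phi(\mathrm{rad}(q)))^{D}\ll(\log_2(3q))^{D}$ and $\sum_{\ell\mid q}(\log\ell)/\ell\ll\log_2(3q)$. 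The crucial gain over your approach is that Norton is applied with modulus $d$, which is typically \emph{much} smaller than $q$, so the factor $\log(2d)$ is not uniformly $\log(2q)$. The few primes $p\mid q$ contribute $O(\log_2(3q))$ separately.
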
 
To establish the first assertion of Theorem \ref{thm:sigmaodd}, we set $y \coloneqq \exp((\log x)^{\epsilon/2})$ and $z \coloneqq x^{1/\log_2 x}$. From the count of $n \le x$ having $\sigma(n) \equiv a \pmod q$, we first remove those that are either $z$-smooth or have a repeated prime factor exceeding $y$. By well-known results on smooth numbers (for instance \cite[Theorem 5.13\text{ and }Corollary 5.19, Chapter III.5]{tenenbaum15}), the total contribution of such $n$ is $\ll \Psi(x, z) + x/y \ll x/(\log x)^{(1+o(1))\log_3 x}$, which is negligible in comparison to the error term in the statement of Theorem \ref{thm:sigmaodd}. 

Among the surviving $n$, we now remove those that have $P_2(n) \le y$. Any such $n$ can be written as $n = mP$, where $P = P(n)>z$, $m$ is $y$-smooth and $\sigma(n) = \sigma(m) \sigma(P) = \sigma(m) (P+1)$. The condition $\sigma(n) \equiv a \pmod q$ forces $\sigma(m)$ to be coprime to $q$ and, for each choice of $m$, restricts $P \in (z, x/m]$ to at most one coprime residue class modulo $q$. Hence for each choice of $m$, there are $\ll x/\phi(q)m\log(z/q) \ll x\log_2 x/\phi(q)m\log x$ many possible choices of $P$, by the Brun-Titchmarsh theorem. Consequently, the total contribution of the surviving $n \le x$ that have $P_2(n) \le y$ is
\begin{equation*}
\begin{split}
\ll \frac{x\log_2 x}{\phi(q)\log x}\sum_{\substack{m:\, P^+(m) \le y}} \frac{\bbm_{(\sigma(m), q)=1}}m &\ll \frac{x\log_2 x}{\phi(q)\log x} \exp\left(\sum_{p \le y} \frac{\bbm_{(p+1, \, q)=1}}p\right)\\
&\ll \frac{x\log_2 x}{\phi(q)(\log x)^{1-\alpha\epsilon/2}} \exp((\log_2 (3q))^{O(1)}) \ll \frac{x}{\phi(q)(\log x)^{1-2\alpha\epsilon/3}}.
\end{split}
\end{equation*}
Here we have estimated the sum $\sum_{p \le y} \bbm_{(p+1, \, q)=1}/p$ using Lemma \ref{lem:primesum} (on the polynomial $F(T) \coloneqq T+1$) and recalled that $\alpha \gg 1/\log_2(3q) \gg 1/\log_3 x$ for all odd $q \le (\log x)^K$. Collecting estimates, we have so far shown that
\begin{equation*}
\begin{split}
\sum_{\substack{n \le x\\\sigma(n) \equiv a \pmod q}} 1 &= \sum_{\substack{n \le x\\P(n)>z, \, P_2(n)>y\\p>y \implies p^2 \nmid n}} \bbm_{\sigma(n) \equiv a \pmod q} + O\left(\frac{x}{\phi(q)(\log x)^{1-2\alpha\epsilon/3}}\right)\\
&= \sum_{\substack{n \le x\\P(n)>z, \, P_2(n)>y\\p>y \implies p^2 \nmid n}} \phiqrec\sum_{\chi \bmod q} ~ \chibara \chisign + O\left(\frac{x}{\phi(q)(\log x)^{1-2\alpha\epsilon/3}}\right),
\end{split}
\end{equation*}
where in the second line above, we have used the orthogonality of the Dirichlet characters mod $q$ to detect the congruence $\sigma(n) \equiv a \pmod q$. With $\chi_{0, q}$ denoting the principal character modulo $q$, we may thus isolate the contribution of $\chi_{0, q}$ to obtain
\begin{equation}\label{eq:Postfirstremovals}
\begin{split}
\sum_{\substack{n \le x\\\sigma(n) \equiv a \pmod q}} 1 = 
\phiqrec&\sum_{\substack{n \le x\\(\sigma(n), q)=1}} 1\\ &+ \phiqrec\sum_{\chi \ne \chi_{0, q} \bmod q} \chibara\sum_{\substack{n \le x\\P(n)>z, \, P_2(n)>y\\p>y \implies p^2 \nmid n}} \chisign + O\left(\frac{x}{\phi(q)(\log x)^{1-2\alpha\epsilon/3}}\right).
\end{split} 
\end{equation}
The second outer sum on the right hand side above is over the non-trivial characters $\chi$ mod $q$, and we have adapted our previous arguments to observe that there are $O({x}/{(\log x)^{1-2\alpha\epsilon/3}})$ many $n \le x$ satisfying $(\sigma(n), q)=1$ but failing at least one of the three conditions below:
\begin{itemize}
    \item[(i)] $P(n)>z,$
    \item[(ii)] $p>y \implies p^2 \nmid n,$ 
    \item[(iii)] $P_2(n)>y$.  
\end{itemize}
Indeed, any $n$ satisfying conditions (i) and (ii) but failing condition (iii) is of the form $mP$, where $P^+(m) \le y$, $P = P^+(n) \in (z, x/m]$ and $\sigma(n) = \sigma(m)(P+1)$. As such, we must have $(\sigma(m), q)=1$, and the number of $P$ given $m$ is $\ll x/m\log z \ll x\log_2 x/m\log x$. Summing this expression over all possible $m$ yields the observed bound. 

In order to estimate the inner sums of $\chi(\sigma(n))$ occurring in \eqref{eq:Postfirstremovals}, we start by modifying some of our initial arguments in the proof of Theorem \ref{thm:mainbound}. Any $n$ with $P(n)>z$, $P_2(n)>y$ and without any repeated prime factor exceeding $y$ can be uniquely written in the form $mP_j \cdots P_1$ for some $j \ge 2$, where $P_1 = P(n) > z$ and $P(m) \le y < P_j < \dots < P_1$. As such, we have 
$$\sum_{\substack{n \le x\\P(n)>z, \, P_2(n)>y\\p>y \implies p^2 \nmid n}} \chisign = \sum_{j \ge 2} \sum_{\substack{m \le x\\P(m) \le y}} \chi(\sigma(m)) \sum_{\substack{P_1, \dots, P_j\\P_j \cdots P_1 \le x/m\\P_1>z, ~y<P_j< \dots < P_1}} \chi(P_1+1) \cdots \chi(P_j+1).$$ 
Define $\rho_\chi \coloneqq \phiqrec \sum_{v \bmod q} \chi_{0, q}(v) \chi(v+1)$. By the Siegel-Walfisz Theorem, we see that
\begin{equation}\label{eq:MainHypoCheck_SW}
\begin{split}
\sum_{y < p \le Y} \chi(p+1) &= \sum_{\substack{v \bmod q\\(v, q)=1}} \chi(v+1) \sum_{\substack{y < p \le Y\\ p \equiv v \pmod q}} 1\\ 
&= \sum_{\substack{v \bmod q\\(v, q)=1}} \chi(v+1) \left\{\phiqrec \sum_{\substack{y < p \le Y}} 1 + O\left(Y \exp(-C_0 \sqrt{\log y})\right) \right\}\\
&= \rho_\chi(\pi(Y) - \pi(y)) + O(\phi(q) Y \exp(-C_0\sqrt{\log y})),
\end{split}
\end{equation}
where $C_0$ is a constant depending at most on $K$. Proceeding as in the proof of Theorem \ref{thm:mainbound}, we successively remove $\chi(P_1+1), \dots, \chi(P_j+1)$, with the input from \eqref{eq:mainhypo} replaced by the above estimate. This leads us to
\begin{equation}\label{eq:Sigmaodd_ChisigFirstEstim}
\begin{split}
\sum_{\substack{n \le x\\P(n)>z, \, P_2(n)>y\\p>y \implies p^2 \nmid n}} \chisign = \sum_{j \ge 2} \frac{(\rho_\chi)^j}{(j-1)!} \sum_{\substack{m \le x\\P(m) \le y}} \chi(\sigma(m)) &\sum_{\substack{P_1, \dots, P_j\\P_1>z, ~ P_j \cdots P_1 \le x/m\\P_2, \dots , P_j \in (y, P_1) \text{ distinct }}} 1\\ &+ O(x \exp(-C_1 \sqrt{\log y})),
\end{split}
\end{equation}
for some constant $C_1>0$ depending at most on $K$. 

Now the main term in the display above is absolutely bounded by 
\begin{align}\allowdisplaybreaks\label{eq:SigmaOdd_MainFinal1}
|\rho_\chi|^2&\sumjT \frac{|\rho_\chi|^{j-2}}{(j-1)!} \sum_{\substack{m \le x\\P(m) \le y\\(\sigma(m), q)=1}}  \sum_{\substack{P_2, \dots, P_j \in (y, x)\\P_2 \cdots P_j \le x/mz}} ~ \sum_{z < P_1 \le x/mP_2 \cdots P_j} 1\\ \nonumber
&\ll \frac{|\rho_\chi|^2 x}{\log z}\sumjT \frac{|\rho_\chi|^{j-2}}{(j-1)!} \sum_{\substack{m \le x\\P(m) \le y\\(\sigma(m), q)=1}}  \frac1m \sum_{\substack{P_2, \dots, P_j \in (y, x)}} \frac1{P_2 \cdots P_j}\\ \nonumber
&\ll |\rho_\chi|^2\frac{x (\log_2 x)^2}{\log x} \sumjT \frac{(|\rho_\chi| \log_2 x)^{j-2}}{(j-2)!} \sum_{\substack{m \le x\\P(m) \le y\\(\sigma(m), q)=1}}  \frac1m \ll |\rho_\chi|^2\frac{x (\log_2 x)^2}{(\log x)^{1-|\rho_\chi|}} \exp\Bigg(\sum_{\substack{p \le y\\(p+1, \, q)=1}} \frac1p\Bigg). \label{eq:SigmaOdd_MainTerm1} 
\end{align}
Invoking Lemma \ref{lem:primesum} to estimate the last sum in the above display, we obtain
\begin{equation}\label{eq:chisign_Oddq1}
\sum_{\substack{n \le x\\P(n)>z, \, P_2(n)>y\\p>y \implies p^2 \nmid n}} \chisign \ll |\rho_\chi|^2 \frac x{(\log x)^{1-|\rho_\chi|-2\alpha\epsilon/3}} + x \exp(-C_1 \sqrt{\log y}). 
\end{equation}
In order to be able to make use of this bound, we need to better understand the $|\rho_\chi|$. To this end, given a nontrivial character $\chi \bmod q$ we let $\condofchi$ denote its conductor, so that $\condofchi \mid q$ and $\condofchi>1$. We can write $\chi$ uniquely in the form $\prod_{\ell^e \parallel q} \chi_\ell$, with $\chi_\ell$ denoting a character mod $\ell^e$ that is nontrivial precisely when $\ell \mid \condofchi$. Now $\phi(q)\rho_\chi = \prod_{\ell^e \parallel q} S_{\chi, \ell}$, where for each prime power $\ell^e \parallel q$,
\begin{equation}\label{eq:Schiell}
S_{\chi, \ell} \coloneqq \sum_{v \bmod \ell^e} \chi_{0, \ell}(v) \chi_\ell(v+1) = \sum_{\substack{v \bmod{\ell^e}\\(v, \ell)=1}} \chi_\ell(v+1) = \sum_{u \bmod{\ell^e}} \chi_\ell(u) - \sum_{\substack{u \bmod{\ell^e}\\u \equiv 1 \pmod \ell}} \chi_\ell(u).
\end{equation}
Here $\chi_{0, \ell}$ denotes the trivial character mod $\ell^e$ and we have noted that as $v$ runs over all the coprime residues mod $\ell^e$, the expression $v+1$ runs over all the residues mod $\ell^e$ except for those congruent to $1$ mod $\ell$. The first sum above is $\bbm_{\chi_\ell = \chi_{0, \ell}} \phi(\ell^e)$. To evaluate the second sum, we consider a primitive root $g$ mod $\ell^e$ (which exists as $\ell$ is odd), and observe that the residues $\{u \bmod{\ell^e}: u \equiv 1 \pmod\ell\}$ are a permutation of the residues $\{g^{(\ell-1)k} \bmod{\ell^e}: 0 \le k<\ell^{e-1}\}$. Hence 
\begin{equation}\label{eq:chiell_along_1modell}
\sum_{\substack{u \bmod{\ell^e}\\u \equiv 1 \pmod \ell}} \chi_\ell(u) = \sum_{0 \le k < \ell^{e-1}} \chi_\ell(g^{\ell-1})^k = \bbm_{(\chi_\ell)^{\ell-1} = \chi_{0, \ell}} ~ \ell^{e-1} = \bbm_{\cond(\chi_\ell) \mid \ell} ~ \ell^{e-1},
\end{equation}
with $\cond(\chi_\ell)$ denoting the conductor of $\chi_\ell$. Altogether, we find that 
$$S_{\chi, \ell} = \bbm_{\chi_\ell = \chi_{0, \ell}} \phi(\ell^e) - \bbm_{\cond(\chi_\ell) \mid \ell} \ell^{e-1} = \bbm_{\cond(\chi_\ell) \mid \ell} \ell^{e-1}\left(\bbm_{\ell \nmid \condofchi} (\ell-1) - 1\right),$$
leading to 
\begin{equation}\label{eq:rhochi}
\rho_\chi = \prod_{\ell^e \parallel q} \frac{S_{\chi, \ell}}{\phi(\ell^e)} = \bbm_{\condofchi \text{ squarefree }} \prod_{\ell^e \parallel q} \left(\bbm_{\ell \nmid \condofchi} - \frac1{\ell-1}\right) = \bbm_{\condofchi \text{ squarefree }} \frac{(-1)^{\omega(\condofchi)} \alpha}{\prod_{\ell \mid \condofchi} (\ell-2)}.
\end{equation}
If $3 \mid q$, let $\psi$ denote the unique character mod $q$ induced by the nontrivial character mod $3$. Then for any nonprincipal character $\chi \ne \psi$ mod $q$ for which $\rho_\chi \ne 0$, its conductor $\condofchi$ is divisible by a prime at least $5$, so that $|\rho_\chi| \le \alpha/3$ by \eqref{eq:rhochi}. As such, \eqref{eq:chisign_Oddq1} yields for all such $\chi$,
\begin{equation}\label{eq:chisign_Oddq2_NonExcp}
\sum_{\substack{n \le x\\P(n)>z, \, P_2(n)>y\\p>y \implies p^2 \nmid n}} \chisign \ll |\rho_\chi|^2 \frac x{(\log x)^{1-(1/3+2\epsilon/3)\alpha}} + x \exp(-C_1 \sqrt{\log y}). 
\end{equation}
Since there are exactly $\prod_{\ell \mid d} (\ell-2)$ primitive characters modulo any squarefree integer $d$, equation \eqref{eq:rhochi} yields
\begin{equation*}
\begin{split}
\sum_{\chi \bmod q} |\rho_\chi|^2 \le \alpha^2\sum_{\substack{d \mid q\\d\text{ squarefree}}} \frac1{\prod_{\ell \mid d} (\ell-2)^2} \sum_{\substack{\chi \bmod q\\\condofchi = d}} 1 \le \alpha^2\sum_{\substack{d \mid q\\d\text{ squarefree}}} \frac1{\prod_{\ell\mid d} (\ell-2)} \le \alpha^2 \prod_{\ell \mid q} \frac{\ell-1}{\ell-2} = \alpha.
\end{split}
\end{equation*}
(This may be compared with the bound placed on the averages $\frac1{\phi(q)} \sumvmodq \chi_{0, q}(v) \chi(v-1)$ in the proof of the analogous Theorem 1.3 in \cite{PSR23A}.)

Summing the bound \eqref{eq:chisign_Oddq2_NonExcp} over all nontrivial characters $\chi \ne \psi$ mod $q$, and plugging the resulting bound into \eqref{eq:Postfirstremovals}, we obtain 
\begin{equation}\label{eq:ReducnToExcpChar}
\begin{split}
\sum_{\substack{n \le x\\\sigma(n) \equiv a \pmod q}} 1 &= \phiqrec\sum_{\substack{n \le x\\(\sigma(n), q)=1}} 1 + \frac{\bbm_{3 \mid q} \overline{\psi}(a)}{\phi(q)}\sum_{\substack{n \le x\\P(n)>z, \, P_2(n)>y\\p>y \implies p^2 \nmid n}} \psi(\sigma(n)) + O\left(\frac{x}{\phi(q)(\log x)^{1-\alpha(1/3+\epsilon)}}\right)\\
&= \phiqrec\sum_{\substack{n \le x\\(\sigma(n), q)=1}} 1 + \frac{\bbm_{3 \mid q} \overline{\psi}(a)}{\phi(q)}\sum_{n \le x} \psi(\sigma(n)) + O\left(\frac{x}{\phi(q)(\log x)^{1-\alpha(1/3+\epsilon)}}\right).
\end{split}
\end{equation}
Here in passing to the second line above, we have recalled our previous bound on the count of $n \le x$ having $\sigma(n)$ coprime to $q$ but failing one of the conditions (i)--(iii) in the discussion following \eqref{eq:Postfirstremovals}. 

Note that the last equality in \eqref{eq:ReducnToExcpChar} already establishes the first assertion of the theorem for moduli $q$ coprime to $6$. To complete the proof of the theorem, it thus remains to only consider the odd moduli $q$ divisible by $3$ and deal with the sum of $\psi(\sigma(n))$ occurring in \eqref{eq:ReducnToExcpChar}. This is where we can directly apply Theorem \ref{thm:mainbound}. Indeed, since $3 \mid q$, we find that a prime $p$ satisfies $(p+1, \, q)=1$ only if $p=3$ or $p \equiv 1 \pmod 3$. Thus for all $p>3$, we have $\psi(\sigma(p)) = \psi(p+1) = -\bbm_{(p+1, \, q)=1}$. A straightforward calculation (analogous to \eqref{eq:MainHypoCheck_SW}) by means of the Siegel-Walfisz theorem shows that the multiplicative function $f(n) \coloneqq \psi(\sigma(n))$ satisfies the hypothesis \eqref{eq:mainhypo} with $y, z$ as chosen in the beginning of the section and with $\varrho \coloneqq -\frac1{\phi(q)}\sum_{\substack{v \bmod q\\(v(v+1), q)=1}} 1 = -\alpha$, $M \coloneqq \phi(q)$, $\err(y) \coloneqq \exp(-C_0 \sqrt{\log y})$ (for some absolute constant $C_0>0$). Since by lemma $\eqref{lem:primesum}$, $\sum_{p \le y} |f(p)|/p = \alpha \log_2 y + O((\log_2 (3q))^{O(1)})$, we deduce that 
$$\sum_{n \le x} \psi(\sigma(n)) \ll \frac{x(\log_2 x)^{1+\alpha}}{(\log x)^{1+\alpha(1-\epsilon)}} \exp((\log_2 (3q))^{O(1)}) \ll \frac{x}{\log x},$$ 
and substituting this into \eqref{eq:ReducnToExcpChar} completes the proof of the theorem.
\begin{rmk}
To substantiate our comment (made after the paragraph following the statement of Theorem \ref{thm:sigmaevensqfree}) on the suggested optimality of the exponent $1/3$ in the error term of Theorem \ref{thm:sigmaodd}, note that by \eqref{eq:rhochi}, we have $\rho_\chi = \alpha/3$ for $\condofchi = 15$ (in the case when $15 \mid q$).
\end{rmk}
\section{Technical Preparation for Theorems \ref{thm:sigmaeven} and \ref{thm:sigmaevensqfree}}
In order to establish Theorems \ref{thm:sigmaeven} and \ref{thm:sigmaevensqfree}, we will need to study the averages 
$$\eta_\chi \coloneqq \eta_\chi(q) \coloneqq \phiqrec \sum_{v \bmod q} \chi_{0, q}(v) \chi(v^2+v+1)$$
which will play the roles of the averages $\rho_\chi$ that came up in the previous section. The following proposition will provide the key information on these averages that will prove to be crucial in our arguments. 
\begin{prop}\label{prop:etachiBounds}
There exists a set $\mcS$ of eighteen (fixed) squarefree positive integers coprime to $6$ such that for all sufficiently large integers $q$ and all nonprincipal characters $\chi$ mod $q$, the following two properties hold true: 
\begin{enumerate}
\item[(i)] If $\condofchi \not\in \mcS$, then $|\eta_\chi| \le \alphatilq/4$.
\item[(ii)] If $\condofchi \in \mcS$, then $\Ree(\eta_\chi) \le \alphatilq/4$.
\end{enumerate}
In particular, we have $\Ree(\eta_\chi) \le \alphatilq/4$ for all nontrivial characters $\chi$ mod $q$, and $|\eta_\chi| \le \alphatilq/4$ for all but a bounded number of characters $\chi$ mod $q$.
\end{prop}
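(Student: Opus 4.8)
The plan is to reduce the estimate on $\eta_\chi$ to a product of local sums over the prime powers dividing $q$, exactly as was done for $\rho_\chi$ in \eqref{eq:rhochi}, and then carry out a careful case analysis of the local sums. Writing $\chi = \prod_{\ell^e \parallel q} \chi_\ell$ and using multiplicativity of $v \mapsto v^2+v+1$ together with the Chinese Remainder Theorem, we have $\phi(q)\eta_\chi = \prod_{\ell^e \parallel q} T_{\chi, \ell}$, where
\begin{equation*}
T_{\chi, \ell} \coloneqq \sum_{\substack{v \bmod{\ell^e}\\(v, \ell)=1}} \chi_\ell(v^2+v+1).
\end{equation*}
The first task is to understand each $T_{\chi, \ell}$ (normalized by $\phi(\ell^e)$), distinguishing whether $\ell \equiv 1$, $\ell \equiv 2 \pmod 3$, or $\ell = 3$, and whether $\ell^2 \mid q$ or $\ell \parallel q$. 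When $\ell \parallel q$, the sum $T_{\chi, \ell}$ over $\F_\ell$ is a one-variable character sum attached to the curve $y^k = v^2+v+1$ (where $k$ is the order of $\chi_\ell$), which by Weil's bound is $O(\sqrt\ell)$ when $\chi_\ell$ is nontrivial and the polynomial is not a $k$-th power; I would extract the exact main term from the number of points, getting that $T_{\chi,\ell}/\phi(\ell^e)$ has absolute value bounded by something like $1 - 2/(\ell-1)$ when $\ell \equiv 1 \pmod 3$ (matching the definition of $\alphatil$) and considerably smaller otherwise. When $\ell^2 \mid q$, a Hensel-lifting argument (the discriminant of $v^2+v+1$ is $-3$, hence a unit mod $\ell$ for $\ell \ne 3$) shows the higher-power sums essentially reproduce the $\F_\ell$ behavior up to controlled factors, so these primes contribute genuinely smaller local factors and can only help.

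The second and main task is the optimization: among all nonprincipal $\chi$, one must locate exactly which conductors $\condofchi$ can push $|\eta_\chi|$ (or $\Ree(\eta_\chi)$) up to the threshold $\alphatilq/4$. Factoring $\eta_\chi$ as a product of local factors $\theta_\ell \coloneqq T_{\chi,\ell}/\phi(\ell^e)$, and noting $|\theta_\ell| = 1$ for $\ell \nmid \condofchi$ with $\ell^2 \nmid q$ while $|\theta_\ell| \le 1 - 2/(\ell-1)$ exactly when $\ell \mid \condofchi$ and $\ell \equiv 1 \pmod 3$, one sees $|\eta_\chi| / \alphatilq$ equals a product of ratios of the form $|\theta_\ell| / (1 - 2/(\ell-1))$ over $\ell \mid \condofchi$, $\ell \equiv 1 \pmod 3$, times factors $|\theta_\ell|$ over $\ell \mid \condofchi$ with $\ell \equiv 2 \pmod 3$ (for such $\ell$ the quadratic $v^2+v+1$ has no roots mod $\ell$ and $\theta_\ell$ turns out to be fairly small), times similar contributions at $\ell=3$ and at $\ell^2 \mid q$. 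The point is that each such factor is genuinely $<1$ and bounded away from $1$ once $\ell$ is small, while for large $\ell$ the factor tends to $1$; hence $|\eta_\chi| \le \alphatilq/4$ automatically unless $\condofchi$ is built from a bounded set of small primes with small exponents of $\chi_\ell$ — this forces $\condofchi$ to lie in some explicit finite set $\mcS'$. For the finitely many conductors in $\mcS'$, and the finitely many characters with those conductors, I would compute $\eta_\chi$ directly (it is an absolute constant depending only on $\chi$, up to the Weil main term which is now exact since the modulus is $\le$ the bounded conductor): the conductors for which $|\eta_\chi|$ actually exceeds $\alphatilq/4$ form the set $\mcS$, while the remaining conductors of $\mcS'$ are thrown back into the ``$|\eta_\chi| \le \alphatilq/4$'' case; and finally for $\condofchi \in \mcS$ one checks by hand that the \emph{real part} still obeys $\Ree(\eta_\chi) \le \alphatilq/4$ (the excess in $|\eta_\chi|$ being ``rotated away'' — e.g. the relevant $\eta_\chi$ are Jacobi-sum-like and nonreal). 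This hand computation, guided by which primes $5, 7, 13$ (and products like $35$) show up, is what pins down $|\mcS| = 18$.

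The hard part will be the optimization step: making the inequality ``$|\eta_\chi| \le \alphatilq/4$ unless $\condofchi$ is small'' fully rigorous requires a clean, uniform bound on the local factors $|\theta_\ell|$ that is strong enough — one needs, for every prime $\ell$ and every nontrivial $\chi_\ell$, an inequality of the shape $|\theta_\ell| \le c_\ell \cdot (1 - 2/(\ell-1))^{\bbm_{\ell \equiv 1 \bmod 3}}$ with $\prod_{\ell} c_\ell$ manageable and each $c_\ell < 1$ for $\ell$ below some explicit bound, so that only finitely many conductors survive. Getting the Weil main term exactly right (the count of affine points on $y^k = v^2 + v + 1$, including the contribution of $v$ with $v^2+v+1 \equiv 0$, and the behavior as $\chi_\ell$ ranges over characters of each order $k \mid \ell - 1$) is where all the arithmetic of $3$ and quadratic reciprocity enters, and is the step most prone to casework; once that is in hand, the passage to the finite set $\mcS$ and the verification of the real-part bound for $\condofchi \in \mcS$ is a finite (if tedious) computation.
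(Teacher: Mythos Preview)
Your overall plan---factor $\eta_\chi$ into local pieces $\theta_\ell = T_{\chi,\ell}/\phi(\ell^e)$, bound each via Weil-type estimates, then reduce to a finite check---is exactly the paper's strategy. However, your analysis of the local factors contains a reversed inequality that, as written, breaks the argument. You claim that for $\ell \mid \condofchi$ the ratio $|\theta_\ell| / \alphatil(\ell)$ is ``bounded away from $1$ once $\ell$ is small, while for large $\ell$ the factor tends to $1$.'' This is backwards: the Weil bound gives $|\theta_\ell| \le (\ell^{1/2}+1)/(\ell-1)$, so the ratio $\mu_\ell \coloneqq |\theta_\ell|/\alphatil(\ell)$ is $O(\ell^{-1/2})$ and hence \emph{tends to $0$} as $\ell \to \infty$. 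It is precisely because a single large prime factor of $\condofchi$ already forces $|\eta_\chi|/\alphatilq < 1/4$ (concretely, $\mu_{29} < 1/4$) that the exceptional conductors must be built from primes below an explicit threshold. Your stated reasoning would instead suggest that conductors supported on large primes are the dangerous ones, which is an infinite set; so the logical chain from your premise to ``only finitely many conductors survive'' is broken as written.

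A few further points where your sketch diverges from what actually works. First, the prime $\ell = 3$ does not arise: throughout this section $3 \nmid q$, so there is no ``contribution at $\ell = 3$'' to manage. Second, the prime $\ell = 2$ requires its own argument that you have omitted: the paper observes that $v \mapsto v^2+v+1$ is a bijection on $U_{2^e}$, forcing $\eta_{\chi,2} = 0$ whenever $\chi_2$ is nontrivial; this is what lets one assume $\condofchi$ is odd. Third, for primes $\ell \ge 5$ with $e_\ell = v_\ell(\condofchi) \ge 2$, your ``Hensel-lifting argument'' is too vague; the paper invokes the Cochrane--Liu--Zheng bound (Lemma~\ref{lem:CLZ03}), $\big|\sum_{v \bmod{\ell^{e_\ell}}} \chi_\ell(v^2+v+1)\big| \le \ell^{e_\ell/2}$, to dispose of non-squarefree conductors outright. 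Once these repairs are made, the reduction to the eighteen conductors in $\mcS$ proceeds by explicit casework on $\omega(\condofchi)$ and $P^+(\condofchi)$ (not on orders of $\chi_\ell$), and the final verification of $\Ree(\eta_\chi) \le \alphatilq/4$ for $\condofchi \in \mcS$ is a direct machine computation, just as you anticipate.
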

The following character sum bound, a special case of \cite[Theoem 1.1]{CLZ03}, will be useful to give a proof of Proposition \ref{prop:etachiBounds}.
\begin{lem}\label{lem:CLZ03}
Let $\ell$ be a prime at least $5$. Then for any integer $e \ge 2$ and any primitive character $\chi$ mod $\ell^e$, we have 
$$\left| \sum_{v \bmod \ell^e} \chi(v^2+v+1) \right| \le \ell^{e/2}.$$
\end{lem}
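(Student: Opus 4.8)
The plan is to use the classical $p$-adic (``stationary phase'') method for complete character sums to prime power moduli; since $v^2+v+1$ has degree two, the method can be carried through completely explicitly and even yields equality.

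First I would complete the square. Since $\ell \ge 5$, the residues $2$ and $3$ are units mod $\ell^e$, so the substitution $v = w - 2^{-1}$ is a bijection of $\Z/\ell^e\Z$ carrying $v^2+v+1$ to $w^2+c$ with $c := 3\cdot 4^{-1}$ a unit; it then suffices to show $|S| \le \ell^{e/2}$ for $S := \sum_{w \bmod \ell^e}\chi(w^2+c)$. Set $m := \lceil e/2\rceil$, so $1 \le e-m \le m$ and $2m \ge e$. For any integer $t$ the map $w \mapsto w+\ell^m t$ permutes residues mod $\ell^e$, so $S = \sum_w \chi((w+\ell^m t)^2+c)$; averaging this over $t \bmod \ell^{e-m}$ and using that $2m \ge e$ makes the expansion truncate, $(w+\ell^m t)^2+c \equiv (w^2+c)+2\ell^m t w \pmod{\ell^e}$, I would factor $\chi(w^2+c)$ out of each term (the terms with $\ell \mid w^2+c$ vanish since $\chi$ kills non-units) and then use the structure of $U_{\ell^e}$: because $\ell$ is odd and $e \ge 2$, the map $u \mapsto \chi(1+\ell^m u)$ is a character of $\Z/\ell^{e-m}\Z$, equal to $e^{2\pi i\gamma u/\ell^{e-m}}$ with $\gamma$ a unit \emph{because $\chi$ is primitive} (primitivity says $\chi$ is nontrivial on the top layer $1+\ell^{e-1}\Z$, which forces the order of this character to be exactly $\ell^{e-m}$). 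Orthogonality in $t$ then collapses $S$ onto the critical points: $S = \sum_{w:\,\ell^{e-m}\mid w}\chi(w^2+c)$, and writing $w = \ell^{e-m}s$ with $s \bmod \ell^m$ (here $w\equiv 0 \bmod\ell$, so $w^2+c\equiv c$ is a unit) gives $S = \sum_{s \bmod \ell^m}\chi(\ell^{2(e-m)}s^2+c)$.

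It then remains to split into two cases. If $e$ is even, $2(e-m)=e$, the quadratic term dies mod $\ell^e$, and $S = \ell^{e/2}\chi(c)$, so $|S| = \ell^{e/2}$. If $e = 2d+1$, then $e-m=d$ and $2(e-m)=e-1$; factoring out $\chi(c)$ and writing $\chi(1+\ell^{e-1}u) = e^{2\pi i\mu u/\ell}$ with $\mu$ a unit (primitivity again), I would reduce $S$ to $\ell^d\chi(c)\sum_{s \bmod \ell}e^{2\pi i\mu c^{-1}s^2/\ell}$, using that $s \bmod \ell^{d+1}$ sees $s^2$ only mod $\ell$ with fibers of size $\ell^d$; the remaining quadratic Gauss sum has modulus $\sqrt\ell$, so $|S| = \ell^d\sqrt\ell = \ell^{e/2}$.

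The hardest part will be the odd-$e$ case: there the quadratic Taylor term at the critical point does not vanish, and one must recognize what survives as a classical Gauss sum and invoke $|g_\ell|=\sqrt\ell$ — without this refinement the crude count of critical points only gives the weaker bound $\ell^{\lceil e/2\rceil}$. The bookkeeping around the isomorphism $1+\ell^m\Z \cong \Z/\ell^{e-m}\Z$, and in particular the fact that primitivity of $\chi$ forces the relevant frequencies $\gamma$ and $\mu$ to be coprime to $\ell$, is exactly where the hypotheses $e\ge 2$ and $\ell \ge 5$ enter (the latter also guaranteeing that $3/4$ is a unit and that a primitive root mod $\ell^e$ exists).
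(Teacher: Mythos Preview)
Your proof is correct and self-contained; the paper, by contrast, does not prove this lemma at all but simply cites it as a special case of Theorem~1.1 of Cochrane--Liu--Zheng \cite{CLZ03}, which bounds complete character sums $\sum_{v \bmod p^e}\chi(f(v))$ for general rational functions $f$ to prime-power moduli. What you have done is rederive that bound from scratch in the particular case $f(v)=v^2+v+1$ via the $p$-adic stationary phase method: completing the square, averaging over translates $w\mapsto w+\ell^m t$ to localize at the unique critical point $w=0$, and then in the odd-$e$ case identifying the residual sum as a quadratic Gauss sum to recover the exact value $|S|=\ell^{e/2}$. Your route is more elementary and in fact yields equality rather than merely an inequality; the citation buys generality (it would handle higher-degree polynomials where the critical set is larger) at the cost of importing an external result. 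One small remark: your closing parenthetical mentions ``a primitive root mod $\ell^e$ exists,'' but your argument never actually uses this --- all you need is that $u\mapsto\chi(1+\ell^m u)$ is an additive character of $\Z/\ell^{e-m}\Z$, which follows directly from $(1+\ell^m a)(1+\ell^m b)\equiv 1+\ell^m(a+b)\pmod{\ell^e}$.
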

\begin{proof}[Proof of Proposition \ref{prop:etachiBounds}]
We start by factoring $\chi \eqqcolon \prod_{\ell^e \parallel q} \chi_\ell$, where each $\chi_\ell$ is as usual a character mod $\ell^e$. This allows us to factor $\etachi$ as $\prod_{\ell^e \parallel q} \etachiell$, where 
$$\etachiell \coloneqq \frac1{\phi(\ell^e)} \sum_{v \bmod \ell^e} \chi_{0, \ell}(v) \chi_\ell(v^2+v+1).$$
Since $\alphatil(\ell^e) = \frac1{\phi(\ell^e)} \#\{v \bmod \ell^e: (v(v^2+v+1), \ell)=1\}$, it is immediate that $|\etachiell| \le \alphatil(\ell^e) = \alphatil(\ell)$. Moreover, letting $e_\ell \coloneqq v_\ell(\condofchi)$ denote the exponent of the prime $\ell$ in the integer $\condofchi$, we see that $\condofchiell = \ell^{e_\ell}$, so if $e_\ell=0$ (i.e., $\ell \nmid \condofchi$), then $\chi_\ell = \chi_{0, \ell}$ and $\etachiell = \alphatil(\ell)$. 

Assume that $\chi_2$ is nontrivial, so that $e_2 \ge 2$. Letting $U_{2^{e_2}}$ denote the multiplicative group mod $2^{e_2}$ we observe that the map $U_{2^{e_2}} \rightarrow U_{2^{e_2}}: v \mapsto v^2+v+1$, being injective, is also bijective. As a consequence
$$\eta_{\chi, 2} = \frac1{\phi(2^{e_2})} \sum_{\substack{v \bmod 2^{e_2}\\(v, 2)=1}} \chi_2(v^2+v+1) = \frac1{\phi(2^{e_2})} \sum_{\substack{u \bmod 2^{e_2}\\(u, 2)=1}} \chi_2(u) = 0,$$
leading to $\eta_\chi = \eta_{\chi, 2} \prod_{\substack{\ell \mid q: ~ \ell>2}} \etachiell = 0$. Hence in the rest of the argument, it suffices to consider only those characters $\chi$ mod $q$ for which $\chi_2$ is trivial, that is, for which $\condofchi$ is not a multiple of $4$. Since $3 \nmid q$ and $\chi$ is nontrivial mod $q$, it must then be the case that $\chi_\ell \ne \chi_{0, \ell}$ for some prime $\ell \ge 5$ dividing $q$.

Consider a prime $\ell \ge 5$ dividing $q$ for which $\chi_\ell$ is nontrivial. Letting $\chi_\ell$ also denote the primitive chraracter mod $\ell^{e_\ell}$ that induces $\chi_\ell$ mod $\ell^e$, we find that 
\begin{align}\label{eq:etachi_FirstSimplif}\allowdisplaybreaks
\etachiell &= \phiellrec \cdot \ell^{e-e_\ell} \sumvmodelle \chiZellv \chiellvvO = \phielleellrec \sumvmodelle \chiZellv \chiellvvO\\ \nonumber
&=\phielleellrec \left\{\sumvmodelle \chiellvvO - \sum_{\substack{v \bmod \ell^{e_\ell}\\v \equiv 0 \pmod \ell}} \chiellvvO \right\}\\ \nonumber
&=\phielleellrec \left\{\sumvmodelle \chiellvvO - \bbm_{e_\ell=1} \right\}.    
\end{align}
In the last equality above, we have observed that the map $\{v \bmod \elleell: v \equiv 0 \pmod \ell\} \rightarrow \{u \bmod \elleell: u \equiv 1 \pmod \ell\} \colon v \mapsto v^2+v+1$ being an injection from one set to another of the same cardinality is also a bijection. By \eqref{eq:chiell_along_1modell}, this led to
$$\sum_{\substack{v \bmod \ell^{e_\ell}\\v \equiv 0 \pmod \ell}} \chiellvvO = \sum_{\substack{u \bmod \ell^{e_\ell}\\u \equiv 1 \pmod \ell}} \chi_\ell(u) = \bbm_{e_\ell=1} ~ \ell^{e_\ell-1} = \bbm_{e_\ell=1}.$$
If $\ell \ge 5$ and $e_\ell \ge 2$, then by Lemma \ref{lem:CLZ03}, we have 
\begin{equation}\label{eq:etachiellBound_eellge2}
|\etachiell| = \phielleellrec \left|\sumvmodelle \chiellvvO\right| \le \frac{\ell^{e_\ell/2}}{\phi(\elleell)} \le \frac{\ell^{1-e_\ell/2}}{\ell-1}.
\end{equation}
Hence if $v_\ell(\condofchi) = e_\ell \ge 2$ for some prime $\ell \ge 5$ dividing $q$, then 
$$\frac{|\etachi|}{\alphatilq} \le \frac{|\etachiell|}{\alphatil(\ell)} \le \frac{\ell^{1-e_\ell/2}}{\ell-1} \cdot \left(\frac{\ell-1}{\ell-3}\right)^{\bbm_{\ell \equiv 1 \pmod 3}} \le \frac14.$$
This shows that $|\etachi| \le \alphatilq/4$ for all characters $\chi$ mod $q$ whose conductor is not squarefree.

Consider now a nontrivial character $\chi$ mod $q$ whose conductor is squarefree. Then by \eqref{eq:etachi_FirstSimplif} and the Weil bounds (see for instance, \cite[Corollary 2.3]{wan97}), we find that 
\begin{equation}\label{eq:etachiellBound_eell=1}
|\etachiell| = \frac1{\phi(\ell)} \left|\sumvmodell \chiellvvO - 1\right| \le \frac{\ell^{1/2}+1}{\ell-1}.
\end{equation}
Applying this bound for each prime $\ell \ge 5$ dividing $q$, we find that 
$$\frac{|\etachi|}{\alphatilq} \le \prod_{\substack{\ell \mid q\\ \ell>2}} \mu_\ell, ~ ~ \text{ where } 
\mu_\ell \coloneqq \bbm_{\substack{\ell \equiv 1 \pmod 3}} \left(\frac{\ell^{1/2}+1}{\ell-3}\right) + \bbm_{\substack{\ell \ge 5, ~\ell \equiv 2 \pmod 3}} \left(\frac{\ell^{1/2}+1}{\ell-1}\right).$$
Note that $\mu_\ell \in (0, 1)$ for all primes $\ell \ge 5$. Since the functions $(\ell^{1/2}+1)/(\ell-3)$ and $(\ell^{1/2}+1)/(\ell-1)$ are both strictly decreasing, we observe the following cases in which $|\etachi|/\alphatilq \le 1/4$. For $i \in \{1, 2\}$, we let $\omega_i(r)$ denote the number of distinct primes dividing an integer $r$ that are congruent to $i$ mod $3$.
\begin{itemize}
\item If $P^+(\condofchi) \ge 29$, then $|\etachi|/\alphatilq \le \frac{29^{1/2}+1}{29-3} < 0.246$. 
\item If $\omega(\condofchi) \ge 4$, then one of the following three possibilities must hold:\\
(i) Either $\omegaOnefchi \ge 3$, in which case $\Modetachialphatil \le \mu_7 \mu_{13} \mu_{19} < 0.141$, \textbf{OR}\\
(ii) $\omegaTwofchi \ge 3$, in which case $\Modetachialphatil \le \mu_5 \mu_{11} \mu_{17} < 0.112$, \textbf{OR}\\
(iii) $\omegaOnefchi = \omegaTwofchi = 2$, in which case $\Modetachialphatil \le \mu_7 \mu_{13} \cdot \mu_5\mu_{11} < 0.147$. 
\item If $\omega(\condofchi)=3$ but $\condofchi$ does not lie in the set $A_0 \coloneqq \{5 \cdot 7 \cdot 11, 5 \cdot 7 \cdot 13\}$, then 
$$\Modetachialphatil \le \max\{\mu_5 \mu_{11} \mu_{13}, ~ \mu_{5} \mu_{7} \mu_{17}, ~ \mu_{7} \mu_{11} \mu_{13}, ~ \mu_{5} \mu_{7} \mu_{19}\} < 0.247.$$
\item If $\omega(\condofchi)=2$ but $\condofchi$ is not a member of the set $B_0 \coloneqq \{7 \cdot 13, 7 \cdot 19\} \cup \{5 \cdot 11, 5 \cdot 17\} \cup \{5 \cdot 7,  5 \cdot 13,  5 \cdot 19,  7 \cdot 11,  7 \cdot 17\}$, then 
$$\Modetachialphatil \le \max\{\mu_{13} \mu_{19}, ~ \mu_{11} \mu_{17}, ~ \mu_{5} \mu_{23}, ~  \mu_7 \mu_{23}, ~ \mu_{11} \mu_{13}\} < 0.241.$$
\end{itemize}
Hence, defining $\mcS$ to be the set $\{5, 7, 11, 13 ,17, 19, 23\} \cup A_0 \cup B_0$, we have shown that $|\etachi| \le \alphatilq/4$ for all characters $\chi$ mod $q$ whose conductor $\condofchi$ does not lie in the set $\mcS$. It thus only remains to show that for all $\chi$ mod $q$ with $\condofchi \in \mcS$, we have $\Ree(\etachi) \le \alphatilq/4$. For such characters, the identity \eqref{eq:etachi_FirstSimplif} shows that 
\begin{equation}
\begin{split}
\etachi &= \prod_{\substack{\ell \mid q\\\ell \nmid \condofchi}} \alphatil(\ell) \cdot \prod_{\ell \mid \condofchi} \left(\frac1{\phi(\ell)} \sum_{\substack{v \bmod \ell\\v \not\equiv 0 \bmod \ell}} \chi_\ell (v^2+v+1)\right)\\
&= \frac{\alphatil(q)}{\prod_{\substack{\ell \mid \condofchi\\\ell \equiv 1 \pmod 3}} (\ell-3) \cdot \prod_{\substack{\ell \mid \condofchi\\\ell \equiv 2 \pmod 3}} (\ell-1)} \sum_{\substack{v \bmod \condofchi\\\gcd(v, \condofchi)=1}} \psi (v^2+v+1),
\end{split}
\end{equation}
where $\psi \coloneqq \prod_{\ell \mid \condofchi} \chi_\ell$ denotes the primitive character mod $\condofchi$ inducing $\chi$. So we need only show that 
\begin{equation}\label{eq:Sagecode}
\max_{Q \in \mcS} ~ \frac1{\prod_{\substack{\ell \mid Q\\\ell \equiv 1 \pmod 3}} (\ell-3) \cdot  \prod_{\substack{\ell \mid Q\\\ell \equiv 2 \pmod 3}} (\ell-1)} ~ \max_{\substack{\psi \bmod Q\\\psi \text{ primitive}}} ~ \Ree \left(\sum_{\substack{v \bmod Q\\\gcd(v, Q)=1}} \psi (v^2+v+1)\right) \le \frac14.
\end{equation}
But $\mcS$ consists of only eighteen moduli, so this can be verified by a short Sage code. This completes the proof of Proposition \ref{prop:etachiBounds}. 
\end{proof} 
\begin{rmk}
The aforementioned Sage code actually shows that equality is attained in \eqref{eq:Sagecode} for $Q \in \{5, 7, 13, 35\} \subset \mcS$. In other words, for such $Q$, there exist primitive characters $\psi$ mod $Q$ for which 
$$\frac1{\prod_{\substack{\ell \mid Q\\\ell \equiv 1 \pmod 3}} (\ell-3) \cdot  \prod_{\substack{\ell \mid Q\\\ell \equiv 2 \pmod 3}} (\ell-1)} \cdot \Ree \left(\sum_{\substack{v \bmod Q\\\gcd(v, Q)=1}} \psi (v^2+v+1)\right) = \frac14.$$ 
As we shall see in the proof of Proposition \ref{prop:SigmaEvenConvenient} below, the averages $\etachi$ will play the roles of the parameter $\varrho$ from Theorem \ref{thm:mainbound}. This supports our previous comment on the expected optimality of the exponent $1/4$ in the error terms of Theorems \ref{thm:sigmaeven} and \ref{thm:sigmaevensqfree}. 
\end{rmk}
We shall also need the following analogue of Proposition \ref{prop:MathZProp2.1}, which gives a count for the main term in Theorems \ref{thm:sigmaeven} and \ref{thm:sigmaevensqfree}. In what follows, we abbreviate $\alphatilq$ to $\alphatil$.
\begin{lem}\label{lem:sigmaqcoprimEvenq}
Fix $K>0$. We have 
\begin{equation}\label{eq:sigman_evenq_estim} 
\#\{n\le x: (\sigma(n),q)=1\} = \frac{x^{1/2}}{(\log{x})^{1-\alphatil}} \exp(O((\log\log{(3q)})^{O(1)})), \end{equation}
uniformly in even $q \le (\log x)^K$ such that $3 \nmid q$. 
\end{lem}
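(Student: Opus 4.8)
The plan is to combine the special arithmetic structure of the set $\{n : \gcd(\sigma(n),q)=1\}$ for even $q$ with the Selberg--Delange estimate of Proposition~\ref{prop:MathZProp2.1}. The starting point is the elementary observation that $\sigma(n)$ is odd precisely when $n = 2^a m^2$ for some $a \ge 0$ and some odd $m$ — since $\sigma(2^a) = 2^{a+1}-1$ is always odd while $\sigma(p^e)$ for odd $p$ has the parity of $e+1$ — and that $(a,m) \mapsto 2^a m^2$ is then a bijection onto this set. As $\gcd(AB,q)=1$ iff $\gcd(A,q)=\gcd(B,q)=1$, and as both $\sigma(2^a)$ and $\sigma(m^2)$ are odd, for such $n = 2^a m^2$ the condition $\gcd(\sigma(n),q)=1$ decouples into $\gcd(2^{a+1}-1,q)=1$ together with $\gcd(\sigma(m^2),q)=1$. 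Setting $N(M) := \#\{m \le M : 2\nmid m,\ \gcd(\sigma(m^2),q)=1\}$, this yields the exact identity
\begin{equation*}
\#\{n \le x : \gcd(\sigma(n),q)=1\} = \sum_{\substack{a \ge 0\\ \gcd(2^{a+1}-1,\,q)=1}} N\!\big(\sqrt{x/2^a}\,\big).
\end{equation*}

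To estimate the inner count $N(M)$, I would run the Selberg--Delange method exactly as in the proof of Proposition~\ref{prop:MathZProp2.1} (equivalently, \cite{scourfield84,scourfield85}). The function $m \mapsto \sigma(m^2)$ is multiplicative with $\sigma(p^2) = p^2+p+1 = F(p)$, $F(T) := T^2+T+1$, and $\alpha_F(q) = \tfrac1{\phi(q)}\#\{u \bmod q : \gcd(u(u^2+u+1),q)=1\} = \alphatil$ — which is exactly why the factor $v$ accompanies $v^2+v+1$ in the definition of $\alphatil$; here $3 \nmid q$ gives $\alphatil \gg 1/\log_2(3q) > 0$. Applying the method to the Dirichlet series of the multiplicative function $m \mapsto \bbm_{2\nmid m}\,\sigma(m^2)$, whose Euler product $\prod_{p > 2}\big(1 + \bbm_{\gcd(p^2+p+1,\,q)=1}\,p^{-s} + O(p^{-2s})\big)$ has a $\zeta(s)^{\alphatil}$-type singularity at $s = 1$ with Siegel--Walfisz-quality control (via Lemma~\ref{lem:primesum}) on the complementary holomorphic factor, uniformly in $q \le (\log M)^{K+1}$, should give
\begin{equation*}
N(M) = \frac{M}{(\log M)^{1-\alphatil}}\exp\!\big(O((\log\log(3q))^{O(1)})\big)
\end{equation*}
for $M$ sufficiently large. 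The restriction $2\nmid m$ only deletes the Euler factor at $2$, which is holomorphic and nonzero near $s=1$; for the upper bound alone one could instead quote Proposition~\ref{prop:MathZProp2.1} verbatim with $f(m) = \sigma(m^2)$.

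It then remains to sum over $a$. For $2^a \le \sqrt x$ we have $\log\sqrt{x/2^a} \ge \tfrac14\log x$, so the estimate for $N$ gives $N(\sqrt{x/2^a}\,) = 2^{-a/2}\,\tfrac{\sqrt x}{(\log x)^{1-\alphatil}}\,\exp(O((\log\log(3q))^{O(1)}))$ with implied constants independent of $a$; since the $a=0$ term is present (as $\gcd(2^{0+1}-1,q)=\gcd(1,q)=1$), summing this geometric series in $2^{-a/2}$ produces a quantity bounded above and below by absolute multiples of $\tfrac{\sqrt x}{(\log x)^{1-\alphatil}}$, while the tail $2^a > \sqrt x$ contributes only $\sum_{2^a > \sqrt x}\sqrt{x/2^a} \ll x^{1/4}$, which is absorbed; the matching lower bound is already visible from the single term $N(\sqrt x)$. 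Thus the one substantive step is the uniform Selberg--Delange estimate for $N(M)$ — i.e.\ verifying that its Dirichlet series has the requisite type with the $q$-dependence under control — but this is entirely parallel to the proof of Proposition~\ref{prop:MathZProp2.1}, the structural reduction and the geometric-series step being elementary.
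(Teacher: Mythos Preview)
Your proposal is correct and follows essentially the same route as the paper: decompose $n=2^a m^2$ with $m$ odd, invoke Proposition~\ref{prop:MathZProp2.1} (or its proof) for $f(m)=\sigma(m^2)$ with $F(T)=T^2+T+1$ and $\alpha_F(q)=\alphatil$, and sum a geometric series in $2^{-a/2}$. The only cosmetic differences are that the paper truncates the sum on $a$ at $20\log_2 x/\log 2$ rather than at $2^a\le\sqrt x$, and for the lower bound the paper simply takes all squares $n=r^2$ (so that Proposition~\ref{prop:MathZProp2.1} applies verbatim without the oddness constraint on $m$), whereas you pick out the $a=0$ term and need the harmless observation that deleting the Euler factor at $2$ does not disturb the Selberg--Delange estimate.
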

\begin{proof}
Our key observation is that since $q$ is even, $\sigma(n)$ is coprime to $q$ if and only if $n$ is of the form $2^k m^2$ for some integer $k \ge 0$ and some odd integer $m$ satisfying $\gcd(\sigma(m^2), q)=1$; this follows from the fact that $\sigma(n) = \prod_{p^k \parallel n} \sigma(p^k) \equiv \prod_{p^k \parallel n: ~ p>2} ~ (k+1) \pmod 2$. In particular, if $n$ is of the form $r^2$ for some integer $r$, then $\gcd(\sigma(n), q)=1$. As such, the left hand side of \eqref{eq:sigman_evenq_estim} is no less than 
$$\sum_{\substack{r \le x^{1/2}\\(\sigma(r^2), q)=1}} 1 \ge \frac{x^{1/2}}{(\log{x})^{1-\alphatil}} \exp(O((\log\log{(3q)})^{O(1)})).$$
Here to write the last bound above, we have invoked Proposition \ref{prop:MathZProp2.1} with $f(n) \coloneqq \sigma(n^2)$ for which $F(T) \coloneqq T^2+T+1$ and $\alpha_F(q) = \alphatilq$. 

To obtain the upper bound, it suffices to show that the expression on the right hand side of \eqref{eq:sigman_evenq_estim} bounds (from above) the number of possible tuples $(k, m)$ of non-negative integers for which $m$ is odd, $2^k m^2 \le x$ and $\gcd(\sigma(m^2), q)=1$. The contribution of those tuples for which $k > 20 \log_2 x/\log 2$ is no more than
\begin{equation}\label{eq:SigmaevenCoprime_Largek}
\sum_{k>\frac{20 \log \log x}{\log 2}} ~ \sum_{m \le \sqrt{x/2^k}} 1 ~ \le ~ x^{1/2} ~ \sum_{k>\frac{20 \log_2 x}{\log 2}} ~ \frac1{2^{k/2}} ~ \ll ~ \frac{x^{1/2}}{(\log x)^{10}};
\end{equation}
this is negligible compared to the right hand side of \eqref{eq:sigman_evenq_estim}. On the other hand, if $k \le 20 \log_2 x/\log 2$, then $\sqrt{x/2^k} \ge x^{1/2}/(\log x)^{10} \ge x^{1/3}$. Consequently $q \le \big(\log \sqrt{x/2^k}\big)^{3K}$, and another application of Proposition \ref{prop:MathZProp2.1} shows that given $k$, the number of possible $m$ is at most
\begin{multline*}
\sum_{\substack{m \le \sqrt{x/2^k}\\(\sigma(m^2), q)=1}} 1 \ll \frac{\sqrt{x/2^k}}{(\log \sqrt{x/2^k})^{1-\alphatil}} \expOlogtwoqOOne\\ \ll \frac{x^{1/2}}{2^{k/2} (\log x)^{1-\alphatil}} \expOlogtwoqOOne.
\end{multline*}
Here we have noted that since $k \le 20 \log_2 x/\log x$, we have
$$\left(\log \sqrt{\frac x{2^k}}\right)^{1-\alphatil} = \left(\frac12\log x\right)^{1-\alphatil} \left(1+O\left(\frac k{\log x}\right)\right) = \left(\frac12\log x\right)^{1-\alphatil} \left(1+O\left(\frac{\log_2 x}{\log x}\right)\right).$$
Summing the bound in the above display over all $k \ge 0$ establishes the upper bound in \eqref{eq:sigman_evenq_estim}.
\end{proof}
To start proving Theorems \ref{thm:sigmaeven} and \ref{thm:sigmaevensqfree}, we set $y \coloneqq \exp((\log x)^{\epsilon/4})$ and $z \coloneqq x^{1/\log_2 x}$. 
We shall show that for even $q \le (\log x)^K$ and any coprime residue class $a$ mod $q$, the dominant contribution to the count of $n \le x$ satisfying $\sigma(n) \equiv a \pmod q$ comes from those $n$ which have sufficiently many large prime divisors. More precisely, these are the $n$ which have at least six prime factors exceeding $y$ counted with multiplicity.
\begin{prop}\label{prop:SigmaEvenConvenient}
Fix $K>0$ and $\epsilon \in (0, 1)$. We have 
\begin{equation}\label{eq:SigmaEvenConvenient}
\sum_{\substack{n \le x: ~ P_6(n)>y\\ \sigma(n) \equiv a \pmod q}} 1 = \phiqrec \sum_{\substack{n \le x\\ (\sigma(n), q)=1}} 1 +  O\left(\frac{x^{1/2}}{\phi(q)(\log x)^{1-\alphatil(q)(1/4+\epsilon)}}\right),
\end{equation}
uniformly in coprime residue classes $a$ mod $q$ to moduli $q \le (\log x)^K$ satisfying $\gcd(q, 6)=2$.
\end{prop}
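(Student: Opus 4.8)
The plan is to follow the template of the proof of Theorem~\ref{thm:sigmaodd}, with the averages $\rho_\chi$ replaced by the $\etachi$ and with the structural input (already used in the proof of Lemma~\ref{lem:sigmaqcoprimEvenq}) that, $q$ being even, $(\sigma(n),q)=1$ forces $n=2^km^2$ with $m$ odd and $(\sigma(m^2),q)=1$. First I would reduce the main term: since $\#\{n\le x:(\sigma(n),q)=1\}$ is the sum of the counts with $P_6(n)>y$ and with $P_6(n)\le y$, it suffices to show the latter is $\ll x^{1/2}(\log x)^{-1+\alphatil\epsilon/2}\expOlogtwoqOOne$, so that after division by $\phi(q)$ it is absorbed in the claimed error. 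For such $n$ the integer $m$ has at most two prime factors exceeding $y$ counted with multiplicity; writing $m=m_0m_1$ with $m_0$ the $y$-smooth part and $m_1$ the (essentially squarefree) complement, one counts the choices of $m_1$ by Brun--Titchmarsh --- its prime factors lie among the $\alphatilq\phi(q)$ residues $v\bmod q$ with $\gcd(v(v^2+v+1),q)=1$ --- and estimates $\sum_{m_0}1/m_0$ over admissible $m_0$ by Lemma~\ref{lem:primesum} applied to $F(T)=T^2+T+1$, for which $\alpha_F(q)=\alphatilq$.

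With $y$ and $z$ as fixed before the proposition, I would detect $\sigma(n)\equiv a\pmod q$ by orthogonality of characters; the principal character gives the reduced main term $\phiqrec\#\{n\le x,~P_6(n)>y,~(\sigma(n),q)=1\}$, and it remains to bound $\phiqrec\sum_{\chi\ne\chi_0}\overline\chi(a)\sum_{n\le x,~P_6(n)>y}\chisign$. In each character sum I would first discard, at negligible cost, the $n=2^km^2$ with $k$ exceeding a suitable multiple of $\log_2 x$, with $P^+(m)\le z$, or with some prime $>y$ dividing $m$ to a power $\ge2$; for the survivors $m=m_0P_1\cdots P_r$ with $m_0$ $y$-smooth, $y<P_r<\dots<P_1$ distinct, $P_1>z$, and --- because the odd primes of $2^km^2$ appear with even multiplicity --- the condition $P_6(n)>y$ is precisely $r\ge3$. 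Using $\chi(\sigma(2^km^2))=\chi(2^{k+1}-1)\chi(\sigma(m_0^2))\prod_{i=1}^r\chi(P_i^2+P_i+1)$ and peeling off the factors $\chi(P_i^2+P_i+1)$ one at a time by the Siegel--Walfisz theorem (with $\phiqrec\sum_{(v,q)=1}\chi(v^2+v+1)=\etachi$ playing the role of the prime average, exactly as in \eqref{eq:MainHypoCheck_SW}--\eqref{eq:Sigmaodd_ChisigFirstEstim}), the constraint $r\ge3$ extracts a leading factor $|\etachi|^3$; summing the resulting exponential series in $r$ and estimating $\sum_{m_0}|\chi(\sigma(m_0^2))|/m_0$ by Lemma~\ref{lem:primesum} yields, for every $\chi$ with $\condofchi\notin\mcS$ (hence $|\etachi|\le\alphatilq/4$ by Proposition~\ref{prop:etachiBounds}),
\[
\sum_{n\le x,~P_6(n)>y}\chisign\ \ll\ |\etachi|^3(\log x)^{|\etachi|}\,\frac{x^{1/2}}{(\log x)^{1-\alphatil\epsilon/4}}\,\expOlogtwoqOOne\ +\ x^{1/2}\exp\!\big(-c(\log x)^{\epsilon/8}\big).
\]

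The exponential term, summed over the $\le(\log x)^K$ characters, is negligible. For the main contribution I would bound $(\log x)^{|\etachi|}\le(\log x)^{\alphatil/4}$ and combine this with the moment estimate $\sum_{\chi\bmod q}|\etachi|^3\ll\alphatilq^3\exp\!\big(O(\sum_{\ell\mid q}\ell^{-1/2})\big)$, proved by factoring $\etachi=\prod_{\ell^e\parallel q}\etachiell$, bounding each $\etachiell$ by the Weil estimate (resp.\ Lemma~\ref{lem:CLZ03} when $v_\ell(\condofchi)\ge2$), using the second-moment bound $\phi(\ell)^{-2}\sum_{\chi_\ell\,\mathrm{prim}}|\sum_v\chi_\ell(v^2+v+1)-1|^2=1+O(1/\ell)$, and summing over the primitive characters of each conductor; the upshot, using $\alphatilq\gg1/\log_2(3q)$ to absorb $\exp(O(\sum_{\ell\mid q}\ell^{-1/2}))\expOlogtwoqOOne$ into the surplus $(\log x)^{3\alphatil\epsilon/4}$ for $x$ large, is exactly $\ll x^{1/2}/(\phi(q)(\log x)^{1-\alphatil(1/4+\epsilon)})$. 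Finally, for the bounded number of $\chi$ with $\condofchi\in\mcS$ --- for which only $\Ree(\etachi)\le\alphatilq/4$ is available --- I would not take absolute values: after removing (crudely, as in the main-term reduction) the $m$ with at most two primes exceeding $y$, I apply Theorem~\ref{thm:mainbound} to the multiplicative function $m\mapsto\chi(\sigma(m^2))\bbm_{2\nmid m}$ on $[1,\sqrt{x/2^k}]$ with $\varrho=\etachi$, the constant of \eqref{eq:mainhypo} equal to $\phi(q)$, and $\err(y)=\exp(-C_0\sqrt{\log y})$; its first term supplies the power saving $(\log z)^{\Ree\etachi}$ in place of $(\log x)^{|\etachi|}$, while its second term $x(\log y)^{1+|\varrho|}(\log z)^{-2+\Ree\varrho}$ is harmless precisely because $y=\exp((\log x)^{\epsilon/4})$ is so small (the phenomenon flagged after the statement of Theorem~\ref{thm:mainbound}), and multiplication by $1/\phi(q)$ followed by summation over the finitely many such $\chi$ stays within the claimed bound. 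The main obstacle is this last step of the argument: producing the explicit $|\etachi|^3$ saving and, above all, controlling $\sum_\chi|\etachi|^3$ sharply enough to survive multiplication by $(\log x)^{\alphatil/4}$ --- this is where the bounds for character sums of $v^2+v+1$ modulo primes and prime powers (Weil, Lemma~\ref{lem:CLZ03}) and the conductor bookkeeping underlying Proposition~\ref{prop:etachiBounds} enter most delicately.
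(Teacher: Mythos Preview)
Your proposal is correct and follows essentially the same route as the paper: reduce to $n=2^km^2$, use orthogonality, peel off the large prime squares via Siegel--Walfisz to extract an $|\etachi|^3$ saving for $\condofchi\notin\mcS$, control the sum over characters by a third-moment bound on $\etachi$, and handle the finitely many $\chi$ with $\condofchi\in\mcS$ by a direct appeal to Theorem~\ref{thm:mainbound}. The only cosmetic difference is in the moment estimate: the paper uses the pointwise bound $|\etachi|\le\condofchi^{-1/2}\exp(O(\sqrt{\log q}))$ and sums $d^{-1/2}$ over conductors $d\mid q$, whereas you propose a local second-moment computation; both give $\sum_\chi|\etachi|^3\le\exp(O(\sqrt{\log q}))$, which is all that is needed (your stated extra factor $\alphatilq^3$ is not required and in fact not obviously available in the shape you claim, but the argument does not depend on it).
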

\begin{proof}
We start by bounding the contribution of the $z$-smooth $n$ to the left hand side. By the observation made at the start of the proof of Lemma \ref{lem:sigmaqcoprimEvenq}, any such $n$ can be written in the form $2^k m^2$ for some $k \ge 0$ and some $z$-smooth odd $m$. The number of possibilities of $k$ given $m$ is $O(\log x)$. This in fact shows that 
\begin{equation}\label{eq:Sigmaeven_zsmooth}
\sum_{\substack{n \le x: ~ P(n) \le z\\\gcd(\sigma(n), q)=1}} 1 \le \sum_{\substack{m \le x^{1/2}\\ P(m) \le z}} \sum_{k \le \frac{\log x}{\log 2}} 1 \ll  \log x \sum_{\substack{m \le x^{1/2}\\ P(m) \le z}} 1 \ll \frac{x^{1/2}}{(\log x)^{(1/2+o(1)) \log_3 x}},
\end{equation}
where in the last step we have again invoked \cite[Theorem 5.13\text{ and }Corollary 5.19, Chapter III.5]{tenenbaum15}. The last expression above is negligible in comparison to the error term in \eqref{eq:SigmaEvenConvenient}.

Next, we bound the contribution of those $n$ which are divisible by the fourth power of a prime exceeding $y$. As before, any such $n$ can be written in the form $2^k m^2$ for some $k \ge 0$ and some odd $m \le \sqrt{x/2^k}$ where (this time) $m$ is divisible by the square of a prime exceeding $y$. Given $k$, the number of possibilities of $m$ is no more than
$$\sum_{p>y} \sum_{\substack{m \le \sqrt{x/2^k}\\p^2 \mid m}} 1 \le \sqrt{\frac{x}{2^k}} \sum_{p>y} \frac1{p^2} \ll \frac{\sqrt{x/2^k}}y.$$
Summing this bound over all $k \ge 0$, we find that 
\begin{equation}\label{eq:Sigmaeven_4thpower}
\sum_{\substack{n \le x: ~ (\sigma(n), q)=1\\\exists p>y \text{ s.t.} ~ p^4 \mid n}} 1 \ll \frac{\sqrt{x}}y,
\end{equation}
which is also negligible compared to the error term in \eqref{eq:SigmaEvenConvenient}.

By \eqref{eq:Sigmaeven_zsmooth} and \eqref{eq:Sigmaeven_4thpower}, we may thus ignore the contribution of those $n$ to the left hand side of \eqref{eq:SigmaEvenConvenient}, which are either $z$-smooth or are divisible by the fourth power of a prime exceeding $y$. In order to complete the proof of the proposition, it thus remains to show that
\begin{equation}\label{eq:Sigmaeeven_ConvFiltered}
\sum_{\substack{n \le x: ~ P_6(n)>y\\ P(n)>z; ~ p>y \implies p^4 \nmid n\\\sigma(n) \equiv a \pmod q}} 1 = \phiqrec \sum_{\substack{n \le x\\(\sigma(n), q)=1}} 1 +  O\left(\frac{x^{1/2}}{\phi(q)(\log x)^{1-\alphatil(q)(1/4+\epsilon)}}\right).
\end{equation}
To prove this estimate, we start by invoking the orthogonality of the Dirichlet characters mod $q$ to write 
\begin{equation}\label{eq:SigmaEven_Orthog1}
\sum_{\substack{n \le x: ~ P_6(n)>y\\ P(n)>z; ~ p>y \implies p^4 \nmid n\\\sigma(n) \equiv a \pmod q}} 1 = \phiqrec \sum_{\substack{n \le x: ~ P_6(n)>y\\ P(n)>z; ~ p>y \implies p^4 \nmid n\\\gcd(\sigma(n), q)=1}} 1 + \phiqrec \sum_{\chi \ne \chi_{0, q} \bmod q} \chibara \sum_{\substack{n \le x: ~ P_6(n)>y\\ P(n)>z; ~ p>y \implies p^4 \nmid n}} \chi(\sigma(n)).\\
\end{equation}
We remove the additional conditions in the first sum on the right hand side above. To begin with, we observe that up to a negligible error, we may ignore the condition $P_6(n)>y$: indeed, any $n \le x$ which violates this condition but satisfies all the other conditions in the first sum can be written in the form $2^k m^2$ where $m$ is odd but not $z$-smooth, has no repeated prime factor exceeding $y$, and satisfies $P_6(m^2) \le y$ and $\gcd(\sigma(m^2), q)=1$. As such, $m$ can be written in the form $rP$ for some prime $P>z$ and some odd $r$ coprime to $P$ which satisfies $P_2(r) \le y$ and $\gcd(\sigma(r^2), q)=1$. Altogether, we find that 
\begin{align}\allowdisplaybreaks \label{eq:P6nley_BoundPrep}
\sum_{\substack{n \le x: ~ P_6(n) \le y\\ P(n)>z; ~ p>y \implies p^4 \nmid n\\\gcd(\sigma(n), q)=1}} 1 &\le \sum_{k \ge 0} ~ \sum_{\substack{r \le x: ~ P_2(r) \le y\\(\sigma(r^2), q)=1}} ~\sum_{z<P \le \sqrt{x/2^k r^2}} 1 \nonumber \\ &\ll \frac{x^{1/2}}{\log z} \sum_{k \ge 0} \frac1{2^{k/2}} ~ \sum_{\substack{r \le x: ~ P_2(r) \le y\\(\sigma(r^2), q)=1}} \frac1r
\ll \frac{x^{1/2} \log_2 x}{\log x} \sum_{\substack{r \le x: ~ P_2(r) \le y\\(\sigma(r^2), q)=1}} \frac1r. 
\end{align}
Any $r$ counted in the above sum can be written in the form $AB$, where $P(B) \le y < P(A)$, so that $A$ is either $1$ or a prime and $\sigma(r^2) = \sigma(A^2) \sigma(B^2)$. Given $B$, the sum of $1/A$ over all possible $A$ is at most $1+\sum_{p \le x} 1/p \ll \log_2 x$. We infer that 
\begin{equation}\label{eq:Sum1r_(Sigmar2,q)=1}
\begin{split}
\sum_{\substack{r \le x: ~ P_2(r) \le y\\(\sigma(r^2), q)=1}} \frac1r &\ll (\log_2 x) \sum_{\substack{B \le x: ~ P(B) \le y\\(\sigma(B^2), q)=1}} \frac1B \ll (\log_2 x)  \exp\left(\sum_{p \le y} \frac{\bbm_{(p^2+p+1, \, q)=1}}p\right)\\ &\ll (\log x)^{\alphatil \epsilon/4} (\log_2 x) \expOlogtwoqOOne,
\end{split}     
\end{equation}
where in the last line above, we have invoked Lemma \ref{lem:primesum} on the polynomial $F(T) \coloneqq T^2+T+1$. Inserting the above bound into \eqref{eq:P6nley_BoundPrep}, we obtain
\begin{equation}\label{eq:P6nley_Bound}
\begin{split}
\sum_{\substack{n \le x: ~ P_6(n) \le y\\ P(n)>z; ~ p>y \implies p^4 \nmid n\\\gcd(\sigma(n), q)=1}} 1 \ll \frac{x^{1/2} (\log_2 x)^2}{(\log x)^{1-\alphatil \epsilon/4}} \expOlogtwoqOOne \ll \frac{x^{1/2}}{(\log x)^{1-\alphatil \epsilon/2}},
\end{split}
\end{equation}
whereupon from \eqref{eq:SigmaEven_Orthog1}, it follows that
\begin{multline}\label{eq:Sigmaeven_Orthog2}
\sum_{\substack{n \le x: ~ P_6(n)>y\\ P(n)>z; ~ p>y \implies p^4 \nmid n\\\sigma(n) \equiv a \pmod q}} 1 = \phiqrec \sum_{\substack{n \le x\\(\sigma(n), q)=1}} 1\\ + \phiqrec \sum_{\chi \ne \chi_{0, q} \bmod q} \chibara \sum_{\substack{n \le x\\P(n)>z, ~ P_6(n)>y\\ p>y \implies p^4 \nmid n}} \chi(\sigma(n)) + O\left(\frac{x^{1/2}}{\phi(q) (\log x)^{1-\alphatil \epsilon/2}}\right).
\end{multline}
Here we have also used \eqref{eq:Sigmaeven_zsmooth} and \eqref{eq:Sigmaeven_4thpower} respectively to remove the conditions ``$P(n)>z$" and ``$p>y \implies p^4 \nmid y$" occurring in the first sum on the right hand side of \eqref{eq:SigmaEven_Orthog1}.

In order to estimate the inner sums on $\chi(\sigma(n))$ in \eqref{eq:Sigmaeven_Orthog2}, we proceed analogously to our proof of Theorem \ref{thm:sigmaodd}. For a given nontrivial character $\chi$ mod $q$, any $n$ counted in the aforementioned sum can be uniquely written in the form $M P_1^2 \cdots P_j^2$ for some $j \ge 3$, some $y$-smooth $M$ and some primes $P_1, \dots, P_j$, which satisfy $P_1>z$ and $y<P_j < \dots <P_2 < P_1$. (Here the condition $j \ge 3$ is a consequence of $P_6(n)>y$.) Proceeding as in the proof of Theorem \ref{thm:mainbound}, and using the estimate 
$$\sum_{y<p \le Y} \chi(p^2+p+1) = \eta_\chi (\pi(Y) - \pi(y)) + O(\phi(q) Y \exp(-C_0 \sqrt{\log y}))$$
in place of \eqref{eq:MainHypoCheck_SW}, we obtain the following analogue of \eqref{eq:Sigmaodd_ChisigFirstEstim}
\begin{equation}\label{eq:SigmaEven_ChisigFirstEstim}
\begin{split}
\sum_{\substack{n \le x\\P(n)>z, ~ P_6(n)>y\\p>y \implies p^4 \nmid n}} \chisign = \sum_{j \ge 3} \frac{(\eta_\chi)^j}{(j-1)!} \sum_{\substack{M \le x\\P(M) \le y}} \chi(\sigma(M)) &\sum_{\substack{P_1, \dots, P_j\\P_1>z, ~ P_j \cdots P_1 \le \sqrt{x/M}\\P_2, \dots , P_j \in (y, P_1) \text{ distinct }}} 1\\ &+ O(x^{1/2} \exp(-C_1 \sqrt{\log y})),
\end{split}
\end{equation}
where $C_1 = C_1(K)$ is a constant. Bounding the main term above as in \eqref{eq:SigmaOdd_MainFinal1}, we deduce that
\begin{equation}\label{eq:ChiSign_evenq_Naive}
\sum_{\substack{n \le x\\P(n)>z, ~ P_6(n)>y\\p>y \implies p^4 \nmid n}} \chisign \ll |\eta_\chi|^3 ~ \frac{x^{1/2} (\log_2 x)^3}{(\log x)^{1-|\eta_\chi|}} \sum_{\substack{M \le x\\P(M) \le y\\(\sigma(M), q)=1}}  \frac1{M^{1/2}} + x^{1/2} \exp(-C_1 \sqrt{\log y}).
\end{equation}
To estimate the sum on $M$ above, we recall that, by the observation made at the start of the proof of Lemma \ref{lem:sigmaqcoprimEvenq}, any $M$ counted in the sum can be uniquely written in the form $2^k m^2$ for some $k \ge 0$ and some odd $y$-smooth $m$ satisfying $\gcd(\sigma(m^2), q)=1$. We thus obtain 
\begin{equation}\label{eq:Sigmaeven_MRecipSum}
\begin{split}
\sum_{\substack{M \le x\\P(M) \le y\\(\sigma(M), q)=1}}  \frac1{M^{1/2}} &\le \sum_{k \ge 0}  \frac1{2^{k/2}} \sum_{\substack{m \le \sqrt{x/2^k}\\P(m) \le y\\(\sigma(m^2), q)=1}}  \frac1m\\ &\ll \exp\left(\sum_{p \le y} \frac{\bbm_{(p^2+p+1, \, q)=1}}p\right) \ll (\log x)^{\alphatil \epsilon/4} \expOlogtwoqOOne.  
\end{split}
\end{equation}
Inserting this bound into \eqref{eq:ChiSign_evenq_Naive} yields
\begin{equation*}
\sum_{\substack{n \le x\\P(n)>z, ~ P_6(n)>y\\p>y \implies p^4 \nmid n}} \chisign \ll |\eta_\chi|^3 ~ \frac{x^{1/2} (\log_2 x)^3}{(\log x)^{1-|\eta_\chi| - \alphatil\epsilon/4}} \expOlogtwoqOOne + x^{1/2} \exp(-C_1 \sqrt{\log y}).
\end{equation*}
Assume that $\condofchi \not\in \mcS$, where $\mcS$ is the set of eighteen positive integers considered in Proposition \ref{prop:etachiBounds}. Then $|\etachi| \le \alphatil/4$, and we obtain, for all such characters $\chi$ mod $q$,
\begin{equation}\label{eq:ChiSign_evenq_Naive3}
\sum_{\substack{n \le x\\P(n)>z, ~ P_6(n)>y\\p>y \implies p^4 \nmid n}} \chisign \ll |\eta_\chi|^3 ~ \frac{x^{1/2}}{(\log x)^{1-\alphatil(1/4+\epsilon/2)}} + x^{1/2} \exp(-C_1 \sqrt{\log y}).
\end{equation}
Now from the computations in \eqref{eq:etachiellBound_eellge2} and \eqref{eq:etachiellBound_eell=1}, we see that for each nontrivial character $\chi$ mod $q$, we have (with $e_\ell \coloneqq v_\ell(\condofchi)$ as before),
$$|\etachi| = \prod_{\ell \mid q} |\etachiell| \le \prod_{\ell \mid \condofchi} \frac{\ell^{e_\ell/2}+1}{\phi(\elleell)} \le  \prod_{\ell \mid \condofchi} \ell^{-e_\ell/2} 
 \left(1+O\left(\frac1{\ell^{1/2}}\right)\right) \le \condofchi^{-1/2} \exp(O(\sqrt{\log q})).$$
Since there are no more than $d$ characters mod $q$ having conductor $d$, we obtain
\begin{equation*}
\begin{split}
\sum_{\chi \ne \chi_{0, q} \bmod q} |\etachi|^3 \le \sum_{d \mid q} d \cdot \frac1{d^{3/2}} \exp(O(\sqrt{\log q})) \le \exp(O(\sqrt{\log q})),
\end{split}
\end{equation*}
where we have noted that $\sum_{d \mid q} {d^{-1/2}} \le \prod_{\ell \mid q} (1+O(\ell^{-1/2})) \le \exp(O(\sqrt{\log q}))$. Summing the bound \eqref{eq:ChiSign_evenq_Naive3} over all nonprincipal characters $\chi$ mod $q$ having $\condofchi \not\in \mcS$, and invoking the bound on $\sum_{\chi \ne \chi_{0, q} \bmod q} |\etachi|^3$ obtained above, we find that
\begin{equation}\label{eq:SigmaEven_mcStotal}
\sum_{\substack{\chi \ne \chi_{0, q} \bmod q\\ \condofchi \not\in \mcS}} \left|\sum_{\substack{n \le x: ~ P(n)>z\\P_6(n)>y; ~ p>y \implies p^4 \nmid n}} \chi(\sigma(n))\right| \ll \frac{x^{1/2}}{(\log x)^{1-\alphatil(1/4+\epsilon)}}.
\end{equation}
It remains to consider the characters $\chi$ mod $q$ whose conductors lie in the set $\mcS$. For each such character, we may invoke \eqref{eq:P6nley_Bound}, \eqref{eq:Sigmaeven_zsmooth} and \eqref{eq:Sigmaeven_4thpower} to obtain 
$$\sum_{\substack{n \le x\\P_6(n)>y, ~ P(n)>z\\ p>y \implies p^4 \nmid n}} \chi(\sigma(n)) = \sum_{\substack{n \le x}} \chi(\sigma(n)) + O\left(\frac{x^{1/2}}{(\log x)^{1-\alphatil \epsilon/2}}\right).$$
Recalling the observation made at the start of the proof of Lemma \ref{lem:sigmaqcoprimEvenq} along with the bound \eqref{eq:SigmaevenCoprime_Largek}, we obtain
\begin{equation}\label{eq:Sigmaeven_MainboundApp_Prep}
\sum_{\substack{n \le x\\P_6(n)>y, ~ P(n)>z\\ p>y \implies p^4 \nmid n}} \chi(\sigma(n)) = \sum_{\substack{k \le \frac{20 \log \log x}{\log 2}}} \chi(\sigma(2^k)) ~ \sum_{m \le \sqrt{x/2^k}} \bbm_{2 \nmid m} ~\chi(\sigma(m^2)) ~ + ~ O\left(\frac{x^{1/2}}{(\log x)^{1-\alphatil \epsilon/2}}\right).
\end{equation}
Finally, we invoke Theorem \ref{thm:mainbound} on the multiplicative function $m \mapsto \bbm_{2 \nmid m} ~ \chi(\sigma(m^2))$ to bound each of the inner sums in the above display. Noting that $\sqrt{x/2^k} \ge x^{1/2}/(\log x)^{10} > z$ and that $\etachi$ plays the role of $\varrho$, we deduce that the sums on $m$ in the above display are all
\begin{multline*}
\ll \frac{\sqrt{x/2^k}}{(\log z)^{1-\Ree(\etachi)}} \left( (\log y)^{\alphatil} + \frac{(\log y)^{1+\alphatil}}{\log z}\right) \exp\left(\sum_{p \le y} \frac{\bbm_{(p^2+p+1, \, q)=1}}p\right)\\ \ll \frac{x^{1/2} (\log_2 x)^2}{2^{k/2}(\log x)^{1-\alphatil/4 - \alphatil\epsilon/2}} \expOlogtwoqOOne \ll \frac{x^{1/2}}{2^{k/2}(\log x)^{1-\alphatil(1/4 + \epsilon)}}. 
\end{multline*}
In the last line above, we have utilized the second assertion of Proposition \ref{prop:etachiBounds} (namely that $\Ree(\etachi) \le \alphatil/4$) in conjunction with Lemma \ref{lem:primesum}. Summing the above bound over all $k \ge 0$ and inserting into \eqref{eq:Sigmaeven_MainboundApp_Prep}, we obtain
$$\sum_{\substack{n \le x\\P_6(n)>y, ~ P(n)>z\\ p>y \implies p^4 \nmid n}} \chi(\sigma(n)) \ll \frac{x^{1/2}}{(\log x)^{1-\alphatil(1/4 + \epsilon)}}$$
for all characters $\chi$ mod $q$ having $\condofchi \in \mcS$. We use this bound for each of the $O(1)$ nontrivial characters $\chi$ mod $q$ having $\condofchi \in \mcS$ and use \eqref{eq:SigmaEven_mcStotal} to deal with the rest of the characters mod $q$. Inserting these bounds into \eqref{eq:Sigmaeven_Orthog2}, we obtain the desired estimate \eqref{eq:Sigmaeeven_ConvFiltered}, which completes the proof of the proposition. 
\end{proof}
\begin{rmk}
The proofs of Theorem \ref{thm:sigmaodd} and Proposition \ref{prop:SigmaEvenConvenient} substantiate the comments following Theorem \ref{thm:mainbound}. Note that $\varrho = -\alpha$ for the sum $\sum_{n \le x} \psi(\sigma(n))$ at the end of section \ref{sec:ThmsigmaoddProof}, so a direct application of \cite[Theorem 1.1]{PSR23A} (or the methods used to prove it) would be unable to detect the negative sign of $\varrho$ and would yield a bound on this sum which would have the same order of magnitude as the main term (by Lemma \ref{lem:sigmaqcoprimEvenq}). A similar phenomenon takes place in the proof of Proposition \ref{prop:SigmaEvenConvenient} for the sums of $\chi(\sigma(n))$ for the characters $\chi$ having conductors in the set $\mcS$. On the other hand, we cannot apply this paper's Theorem \ref{thm:mainbound} for all the nontrivial characters $\chi$ mod $q$, for if we did, then the terms with $(\log y)^{1+|\varrho|}$ in Theorem \ref{thm:mainbound} would culminate into a large error term that would stand in the way of achieving uniformity in $q$ up to (fixed) large powers of $\log x$. 
\end{rmk}
\section{Distribution of the sum-of-divisors function to general even moduli: Proof of Theorem \ref{thm:sigmaeven}}
We continue with $y$ and $z$ as defined in the previous section. Note that the right hand sides of \eqref{eq:sigmaeven} and \eqref{eq:SigmaEvenConvenient} are equal up to a negligible error: indeed any $n$ having $P_6(n) \le q$ also has $P_6(n) \le y$, so that by \eqref{eq:Sigmaeven_zsmooth}, \eqref{eq:Sigmaeven_4thpower} and \eqref{eq:P6nley_Bound}, the contribution of all such $n$ to the right hand side of \eqref{eq:sigmaeven} is absorbed in the error term. By Proposition \ref{prop:SigmaEvenConvenient}, it thus suffices to show that 
\begin{equation}\label{eq:SigmaEven_RemInconv}
\sum_{\substack{n \le x: ~ q<P_6(n) \le y\\ P(n)>z; ~ p>y \implies p^4 \nmid n\\\sigma(n)\equiv a \pmod q}} 1 \ll \frac{x^{1/2}}{\phi(q) (\log x)^{1-\alphatil \epsilon/2}}
\end{equation}
in order to complete the proof of Theorem \ref{thm:sigmaeven}. 

We write the left hand side of \eqref{eq:SigmaEven_RemInconv} as $\Sigma_1 + \Sigma_2$, where $\Sigma_1$ denotes the count of the $n$ contributing to the sum in \eqref{eq:SigmaEven_RemInconv} which are divisible by the fourth power of a prime exceeding $q$. 
First consider the contribution of the $n$ counted in $\Sigma_1$. As before, the coprimality of $\sigma(n)$ with $q$ guarantees that we can write $n = 2^k m^2$ for some $k \ge 0$ and some odd $m$ satisfying $P_6(m^2) \le y$. Since $n$ is divisible by the fourth power of a prime exceeding $q$, it follows that the squarefull part of $m$ (i.e., the largest squarefull divisor of $m$) is divisible by a prime exceeding $q$. Hence $m$ can be written in the form $rSP$, with $r, S, P$ being pairwise coprime and satisfying $P_2(r) \le y$, $P = P(n)>z$ and with $S>q^2$ being squarefull. Altogether, we find that 
\begin{equation}\label{eq:SigmaOneBound}
\begin{split}
\Sigma_1 &\le \sum_{\substack{n \le x: ~ P_6(n) \le y\\ P(n)>z; ~ p>y \implies p^4 \nmid n\\\exists p>q: ~ p^4|n\\\sigma(n)\equiv a \pmod q}} 1 \le \sum_{k \ge 0} ~ \sum_{\substack{r \le x^{1/2}: ~ P_2(r) \le y\\p>y \implies p^2 \nmid r\\(\sigma(r^2), q)=1}} ~ \sum_{\substack{S>q^2\\S\text{ squarefull}}} ~ \sum_{z < P \le x^{1/2}\big/2^{k/2} rS} 1\\
&\ll \frac{x^{1/2}}{\log z} \sum_{k \ge 0} \frac1{2^{k/2}} ~ \sum_{\substack{r \le x^{1/2}: ~ P_2(r) \le y\\(\sigma(r^2), q)=1}} \frac1r ~ \sum_{\substack{S>q^2\\S\text{ squarefull}}} \frac1S \ll \frac{x^{1/2} \log_2 x}{q \log x} \sum_{\substack{r \le x^{1/2}: ~ P_2(r) \le y\\(\sigma(r^2), q)=1}} \frac1r \ll \frac{x^{1/2}}{q (\log x)^{1-\alphatil\epsilon/2}},
\end{split}
\end{equation}
where have used \eqref{eq:Sum1r_(Sigmar2,q)=1} and the standard bound $\sum_{S>q^2 \text{  squarefull }} 1/S \ll 1/q$. Since the last expression above is absorbed in the right hand side of \eqref{eq:SigmaEven_RemInconv}, it remains to show that the same is true for the sum $\Sigma_2$. 

Now any $n$ counted in $\Sigma_2$ has $P_6(n)>q$ but is not divisible by the fourth power of a prime exceeding $q$. Invoking the observation at the start of the proof of Lemma \ref{lem:sigmaqcoprimEvenq}, we find that any such $n$ can be written in the form $2^k r^2 (P_1 P_2 P_3)^2$, where $k \ge 0$, $P_2(r) \le y$, and $P_1$, $P_2$, $P_3$ are primes satisfying $P_1 = P(n)>z$, $q<P_3<P_2<P_1$, and $\sigma(n) = \sigma(2^k) \sigma(r^2) \prod_{j=1}^3 (P_j^2 + P_j + 1)$. Given $k$ and $r$, the congruence $\sigma(n) \equiv a \pmod q$ forces $(P_1, P_2, P_3)$ mod $q$ $\in \mathcal V_q(a\sigma(2^k r^2)^{-1})$, where $u^{-1}$ denotes the multiplicative inverse of a coprime residue $u$ mod $q$, and for any coprime residue $w$ mod $q$, we have defined
$$\mathcal V_q(w) \coloneqq \left\{(v_1, v_2, v_3) \in U_q^3: ~ \prod_{j=1}^3 (v_j^2 + v_j + 1) \equiv w \pmod q \right\}.$$
(Recall that $U_q$ denotes the group of units modulo $q$.) Given $k, r$ and $(v_1, v_2, v_3) \in \mathcal V_q(a\sigma(2^k r^2)^{-1})$, we bound the number of possible choices of $P_1, P_2, P_3$ which satisfy $(P_1, P_2, P_3) \equiv (v_1, v_2, v_3) \bmod q$. Given $P_2$ and $P_3$, the number of possible $P_1 \in (z, x^{1/2}/2^{k/2} r P_2 P_3]$ satisfying $P_1 \equiv v_1 \pmod q$ is, by the Brun-Titichmarsh inequality, 
$$\ll \frac{x^{1/2}/2^{k/2} r P_2 P_3}{\phi(q) \log(z/q)} \ll \frac{x^{1/2} \log_2 x}{\phi(q) \log x} \cdot \frac1{2^{k/2} r P_2 P_3}.$$
We now sum this bound over all $P_2, P_3 \in (q, x]$ satisfying $P_2 \equiv v_2 \pmod q$ and $P_3 \equiv v_3 \pmod q$. By Brun-Titchmarsh and partial summation, we have
$$\sum_{\substack{q<p \le x\\p \equiv v \pmod q}} \frac1p \ll \frac{\log_2 x}{\phi(q)}.$$
Applying this to the sums on $P_2$ and $P_3$, we find that given $k, r$ and $(v_1, v_2, v_3) \in \mathcal V_q(a\sigma(2^k r^2)^{-1})$, the number of possible $P_1, P_2, P_3$ satisfying $(P_1, P_2, P_3) \equiv (v_1, v_2, v_3) \bmod q$ is 
$$\ll \frac1{\phi(q)^3} \cdot \frac{x^{1/2} (\log_2 x)^3}{2^{k/2} r \log x}.$$
Now let $V_q \coloneqq \max\{\#\mathcal V_q(w): w \in U_q\}$. Summing the above bound over all $(v_1, v_2, v_3) \in \mathcal V_q(a\sigma(2^k r^2)^{-1})$, and subsequently over all $k$ and $r$, we obtain
\begin{equation}\label{eq:Sigma2_ReducntoOrthog}
\Sigma_2 \ll \frac{V_q}{\phi(q)^3} \cdot \frac{x^{1/2} (\log_2 x)^4}{(\log x)^{1-\alphatil\epsilon/4}} \expOlogtwoqOOne.   
\end{equation}
To bound $V_q$, we consider an arbitrary coprime residue $w$ mod $q$, and note that $\#\mathcal V_q(w) = \prod_{\ell^e \parallel q} \#\Vellew$ by the Chinese Remainder Theorem. Moreover, by orthogonality, 
\begin{equation*}
\begin{split}
\#\Vellew &= \sum_{v_1, v_2, v_3 \bmod \ell^e} \chi_{0, \ell} (v_1 v_2 v_3) \cdot \phiellrec \sum_{\chi \bmod \ell^e} \overline\chi(w) \chi\left(\prod_{j=1}^3 (v_j^2+v_j+1) \right)\\
&= \frac{(\alphatil(\ell) \phi(\ell^e))^3}{\phi(\ell^e)} \left\{1 + \frac1{(\alphatil(\ell) \phi(\ell^e))^3} \sum_{\chi \ne \chiZell \bmod \ell^e} \overline\chi(w) \left(\sum_{v \bmod \ell^e} \chiZell(v) \chi(v^2+v+1)\right)^3 \right\}.
\end{split}
\end{equation*}
Given $\chi \ne \chiZell$ mod $\ell^e$, let $\ell^{e_0}$ denote the conductor of $\chi$, so that $e_0 \in \{1, \dots, \ell^e\}$. Then with $\chi$ also denoting the primitive character mod $\ell^{e_0}$ inducing $\chi$, the computations and arguments in \eqref{eq:etachi_FirstSimplif}, \eqref{eq:etachiellBound_eellge2} and \eqref{eq:etachiellBound_eell=1} reveal that if $\ell \ge 5$, then
\begin{equation*}
\begin{split}
\left|\sum_{v \bmod \ell^e} \chiZell(v) \chi(v^2+v+1)\right| = \ell^{e-e_0} \left|\sum_{v \bmod \ell^{e_0}} \chi(v^2+v+1) - \bbm_{e_0=1}\right| \ll \ell^{e-e_0} \cdot \ell^{e_0/2} \ll \ell^{e-e_0/2}. 
\end{split}
\end{equation*}
Since there are at most $\phi(\ell^{e_0})$ characters mod $\ell^e$ with conductor $\ell^{e_0}$, we obtain
$$\#\Vellew = \frac{(\alphatil(\ell) \phi(\ell^e))^3}{\phi(\ell^e)} \left\{1 + O \left(\frac1{\phi(\ell^e)^3} \sum_{1 \le e_0 \le e} \phi(\ell^{e_0}) ~ (\ell^{e-e_0/2})^3\right) \right\} \le \frac{\phi(\ell^e)^3}{\phi(\ell^e)} \left\{1 + O \left(\frac1{\ell^{1/2}}\right) \right\},$$
where we have recalled that for each odd prime $\ell$ dividing $q$, we have $\alphatil(\ell) \ge 1-2/(\ell-1) \ge 1/2$. Letting $e_1 \coloneqq v_2(q)$ and multiplying the above bound over all the odd primes dividing $q$, we obtain 
$$\frac{\#\mathcal V_q(w)}{\phi(q)^3} \le \frac{\#\mathcal V_{2^{e_1}}(w)}{\phi(2^{e_1})^3} ~ \prod_{\substack{\ell^e \parallel q\\\ell>2}} \frac1{\phi(\ell^e)} \left(1+O\left(\frac1{\ell^{1/2}}\right)\right) \le \frac{\#\mathcal V_{2^{e_1}}(w)}{\phi(2^{e_1})^2} \cdot \phiqrec \exp(O(\sqrt{\log q}))$$
uniformly in coprime residues $w$ mod $q$. Finally, recalling (from the proof of Proposition \ref{prop:etachiBounds}) that the map $v \mapsto v^2+v+1$ is a bijection on the multiplicative group mod $2^{e_1}$, we see that 
\begin{equation}\label{eq:V2e1} 
\#\mathcal V_{2^{e_1}}(w) = \sum_{\substack{a_1, a_2, a_3 \bmod 2^{e_1}\\a_1 a_2 a_3 \equiv w \pmod{2^{e_1}}}} 1 = \sum_{\substack{a_1, a_2 \bmod 2^{e_1}\\ \gcd(a_1 a_2, 2)=1}} 1 = \phi(2^{e_1})^2,
\end{equation}
leading to the bound
$$\frac{\#\mathcal V_q(w)}{\phi(q)^3} \le \phiqrec \exp(O(\sqrt{\log q}))$$
uniformly in coprime residues $w$ mod $q$. Hence this bound also holds true for the ratio $V_q/\phi(q)^3$, and inserting this latter bound into \eqref{eq:Sigma2_ReducntoOrthog} we obtain 
$$\Sigma_2 \ll \frac{x^{1/2} (\log_2 x)^4}{\phi(q) (\log x)^{1-\alphatil\epsilon/4}} \exp(O(\sqrt{\log q})) \ll \frac{x^{1/2}}{\phi(q) (\log x)^{1-\alphatil\epsilon/2}},$$
showing that $\Sigma_2$ is also absorbed into the right hand side of \eqref{eq:SigmaEven_RemInconv} and completing the proof of Theorem \ref{thm:sigmaeven}. \hfill \qedsymbol 

\subsection{Optimality in the restriction $P_6(n)>q$.} \label{subsec:P6Optimal}
We construct a counterexample establishing that the restriction $P_6(n)>q$ is optimal, in the sense that uniformity in $q \le (\log x)^K$ fails if this restriction is weakened or in other words, if the set of inputs $n$ is even slightly enlarged to those having fewer than $6$ prime factors exceeding $q$. To do this, we define 
\begin{equation}\label{eq:Vqtilwdef}
\Vqtil(w) \coloneqq \left\{(v_1, v_2) \in U_q \times U_q: ~ (v_1^2 + v_1 + 1) (v_2^2 + v_2 + 1) \equiv w \pmod q \right\}.   
\end{equation}
We shall first establish that 
\begin{equation}\label{eq:VellTwo916}
\#\VellTwotilwNineSixInv \ge 2\ell^2 \left(1+O\left(\frac1{\sqrt{\ell}}\right)\right)    
\end{equation}
uniformly in primes $\ell \ge 5$, with $16^{-1}$ denoting the multiplicative inverse of $16$ mod $\ell$. For each $(v_1, v_2) \in \VellTwotilwNineSixInv$, we set $a_i \equiv v_i^2+v_i+1 \pmod{\ell^2}$, which is equivalent to $(2v_i+1)^2 \equiv 4a_i-3 \pmod{\ell^2}$. As such, we may write 
\begin{equation}\label{eq:VellTwo_916_FirstBound}
\#\VellTwotilwNineSixInv = \sum_{\substack{(a_1, a_2) \in \UellTwoCart\\a_1 a_2 \equiv 9 \cdot 16^{-1} \pmod{\ell^2}\\\text{each} ~ 4a_i-3\text{ is a square mod }\ell^2}} ~ ~ ~ \sum_{\substack{(v_1, v_2) \in  U_{\ell^2} \times U_{\ell^2}\\\text{each} ~ (2v_i+1)^2 \equiv 4a_i-3 \pmod{\ell^2}}} 1 \ge S_1 + S_2,
\end{equation}
where $S_1$ denotes the contribution of the case $4a_1 - 3 \equiv 4a_2 - 3 \equiv 0 \pmod{\ell^2}$ and $S_2$ denotes the contribution of the case $\ell \nmid (4a_1-3) (4a_2-3)$.

First of all, we see that
\begin{equation}\label{eq:S1Equality}
S_1 = \sum_{\substack{(v_1, v_2) \in \UellTwoCart\\\text{each} ~ (2v_i+1)^2 \equiv 0 \pmod{\ell^2}}} 1 = \sum_{\substack{(v_1, v_2) \in  U_{\ell^2} \times U_{\ell^2}\\\text{each} ~ v_i \equiv -2^{-1} \pmod{\ell}}} 1 = \ell^2.
\end{equation}
We seek to put a lower bound on the sum $S_2$. To do this, we first note that the condition $\ell \nmid (4a_1-3)(4a_2-3)$ in conjunction with the condition that $4a_1 - 3$ and $4a_2-3$ are both squares mod $\ell^2$ are together equivalent to the condition that $\left(\frac{4a_1-3}\ell\right) = \left(\frac{4a_2-3}\ell\right) = 1$; indeed the forward direction is tautological, while the reverse implication is a consequence of Hensel's Lemma. In fact by the same lemma, we see that for each choice of $a_i \in U_\ell^2$ satisfying $\left(\frac{4a_i-3}\ell\right) = 1$, the congruence $t^2 \equiv 4a_i-3 \pmod{\ell^2}$ has exactly two distinct solutions $t$ mod $\ell^2$. If $a_i \not\equiv 1 \pmod\ell$, then $4a_i - 3 \not\equiv 1 \pmod\ell$, so that $t \not\equiv 1 \pmod \ell$ for both of the two aforementioned solutions, and each of them leads to a unique solution $v_i \in U_{\ell^2}$ (given by $2v_i+1 \equiv t \pmod{\ell^2}$). Summarizing our argument, we have shown that 
$$S_2 \ge \sum_{\substack{(a_1, a_2) \in \UellTwoCart\\a_1 a_2 \equiv 9 \cdot 16^{-1} \pmod{\ell^2}\\ \left(\frac{4a_1-3}\ell\right) = \left(\frac{4a_2-3}\ell\right) = 1}} ~ ~ ~ \sum_{\substack{(v_1, v_2) \in  U_{\ell^2} \times U_{\ell^2}\\\text{each} ~ (2v_i+1)^2 \equiv 4a_i-3 \pmod{\ell^2}}} 1 \ge 4 \sum_{\substack{(a_1, a_2) \in \UellTwoCart\\16 a_1 a_2 \equiv 9 \pmod{\ell^2}\\\text{each} ~ a_i \not\equiv 1 \pmod \ell, ~ \left(\frac{4a_i-3}\ell\right) = 1}} 1.$$
Now the condition $a_1 a_2 \equiv 9 \cdot 16^{-1} \pmod{\ell^2}$ shows that if $a_1 \equiv 1 \pmod \ell$, then $a_2 \equiv 9 \cdot 16^{-1} \pmod{\ell}$ which can be lifted to a residue class mod $\ell^2$ in at most $\ell$ ways. This shows that ignoring the condition $``\text{each} ~ a_i \not\equiv 1 \pmod \ell$" in the last sum in the above display incurs an error of $O(\ell)$. We deduce that
$$S_2 \ge 4 \sum_{\substack{(a_1, a_2) \in \UellTwoCart\\16 a_1 a_2 \equiv 9 \pmod{\ell^2}\\\left(\frac{4a_1-3}\ell\right) = \left(\frac{4a_2-3}\ell\right) = 1}} 1 + O(\ell).$$
Moreover, for any $a_i \in U_{\ell^2}$ satisfying $\left(\frac{4a_i-3}\ell\right) = 1$, we can write $4 a_i - 3$ in the form $u_i^2 + \ell c_i$ $\pmod{\ell^2}$ for some $u_i, c_i \in \{0, 1, \dots, \ell-1\}$ such that $\gcd(u_i, \ell)=1$. In fact, given $a_i$, there are exactly two possible choices of $u_i$ and exactly one possible choice of $c_i$ (this is because $u_i$ can only be one of the two square roots of $4a_i-3$ mod $\ell$, and either of them determines the same value of $c_i$ via the congruence $c_i \equiv \frac{4a_i-3-u_i^2}\ell$ mod $\ell$). Hence
\begin{equation}\label{eq:S2PreRemoval}
S_2 \ge \sum_{\substack{(u_1, u_2) \in U_\ell \times U_\ell\\(u_1^2+3)(u_2^2+3) \equiv 9 \pmod{\ell}}} ~ \sum_{\substack{(c_1, c_2) \in \Z/\ell\Z \times \Z/\ell\Z\\c_1 (u_2^2+3) + c_2 (u_1^2+3) \equiv \frac{9-(u_1^2+3)(u_2^2+3)}\ell \pmod{\ell}}} 1 + O(\ell),
\end{equation} 
where we have noted that since $4a_i \equiv u_i^2 + 3 + \ell c_i \pmod{\ell^2}$, the condition $(u_1^2 + 3 + \ell c_1) (u_2^2 + 3 + \ell c_2) \equiv 16 a_1 a_2 \equiv 9 \pmod{\ell^2}$ can be rewritten as $\ell(c_1 (u_2^2+3) + c_2 (u_1^2+3)) \equiv 9-(u_1^2+3)(u_2^2+3) \pmod{\ell^2}$. Now given $c_1$, the congruence involving $c_1$ and $c_2$ in \eqref{eq:S2PreRemoval} determines $c_2$ uniquely mod $\ell$. Varying $c_1$ over the $\ell$ possibilities, we thus find that 
\begin{equation}\label{eq:S2PostRemoval}
S_2 \ge \ell\sum_{\substack{(u_1, u_2) \in U_\ell \times U_\ell\\(u_1^2+3)(u_2^2+3) \equiv 9 \pmod{\ell}}} 1 + O(\ell) = \ell\sum_{\substack{(u_1, u_2) \in \F_\ell \times \F_\ell\\(u_1^2+3)(u_2^2+3) = 9 \text{ in }\F_{\ell}}} 1 + O(\ell),
\end{equation}
where in the last equality above, we have noted that there is exactly one possible tuple $(u_1, u_2) \in  \F_\ell \times \F_\ell$ satisfying $(u_1^2+3)(u_2^2+3) = 9 \text{ in }\F_{\ell}$, in which either $u_1$ or $u_2$ is zero (namely the tuple $(u_1, u_2) = (0, 0)$). 

In order to estimate the last sum in \eqref{eq:S2PostRemoval}, we proceed in a manner similar to the proof of Theorem 1.4(b) in \cite{PSR23}: we first show that the polynomial $G(X, Y) \coloneqq (X^2+3)(Y^2+3)-9$ is absolutely irreducible over $\F_\ell[X, Y]$. \footnote{While this claim follows from the absolute irreducibility established in the proof of \cite[Theorem 1.4(b)]{PSR23}, we shall give a more straightforward self-contained argument that suffices for our current setting.} Indeed, assume that $G = U V$ for some $U, V \in \Fellbar[X, Y]$; we wish to show that one of $U$ or $V$ must be constant. If either $U$ or $V$ is a polynomial only in $Y$ (say $U(X, Y) = u(Y)$), then taking $X$ to be a root $\theta \in \Fellbar$ of the polynomial $X^2+3$ on both sides of the identity $G=UV$, we obtain $-9 = G(\theta, Y) = u(Y) V(\theta, Y)$ in the ring $\Fellbar[Y]$, showing that $U(X, Y) = u(Y)$ must be constant. On the other hand, if neither $U$ nor $V$ is a polynomial in $Y$ only, then by comparing the degrees in the variable $X$ on both sides of the identity $H = UV$, we find that $U(X, Y) = u_1(Y) X + u_0(Y)$ and $V(X, Y) = v_1(Y) X + v_0(Y)$ for some $u_i, v_i \in \Fellbar[Y]$. Comparing the coefficients of $X$ on both sides of the identity
$$(Y^2+3)(X^2+3) - 9 = (u_1(Y) X + u_0(Y))(v_1(Y) X + v_0(Y)),$$
we get the three identities $u_1(Y) v_1(Y) = Y^2+3$, $u_1(Y) v_0(Y) + u_0(Y) v_1(Y) = 0$ and $u_0(Y) v_0(Y) = 3Y^2$. Again, letting $\theta \in \Fellbar$ be a root of the polynomial $Y^2+3$, the first of the three identities shows that $Y-\theta$ divides exactly one of $u_1$ or $v_1$ (as the polynomial $Y^2+3$ is separable over $\F_\ell$). Assuming without loss of generality that $(Y-\theta) \mid u_1(Y)$ (so that $\gcd(Y-\theta, v_1(Y))=1$), the second of the aforementioned identities forces $Y-\theta$ to divide $u_0(Y)$, leading to a contradiction in the third identity (since $3 \theta^2 = -9 \ne 0$ in $\Fellbar$). This establishes that $G$ is indeed absolutely irreducible over $\F_\ell[X, Y]$. 

Consequently, the variant of the Weil bound established in \cite[Corollary 2(b)]{LY94} yields, from \eqref{eq:S2PostRemoval},
$$S_2 \ge \ell(\ell + O(\sqrt{\ell})) + O(\ell) = \ell^2 \left(1+O\left(\frac1{\sqrt{\ell}}\right)\right).$$
Combining this with \eqref{eq:S1Equality} and \eqref{eq:VellTwo_916_FirstBound} completes the proof of \eqref{eq:VellTwo916}. 

Now set $q \coloneqq 2 \left(\prod_{5 \le \ell \le Y} \ell\right)^2$, where $Y \ll \log_2 x$ is a parameter to be chosen appropriately later. Then $q \le (\log x)^{O(1)}$ and letting $w_q$ denote the unique coprime residue mod $q$ satisfying $w_q \equiv 9 \cdot 16^{-1} \pmod{\ell^2}$ for all primes $\ell \in [5, Y]$, the lower bound \eqref{eq:VellTwo916} yields 
\begin{equation}\label{eq:VqwqLB}
\#\Vqtil(w_q) = \prod_{5 \le \ell \le Y} \#\VellTwotilwNineSixInv \gg 2^{\pi(Y)} q \prod_{5 \le \ell \le Y} \left(1+O\left(\frac1{\sqrt{\ell}}\right)\right) \gg 2^{\pi(Y)} q \exp(-C_2 \sqrt{\log_2 x}).
\end{equation}
for some absolute constant $C_2>0$. Now consider any integer $n$ of the form $P_1^2P_2^2$, where $P_1$ and $P_2$ are primes satisfying $x^{1/10} < P_2 \le x^{1/6} < P_1 \le x^{1/2}/P_2$ and $(P_1, P_2)$ mod $q$ $\in \Vqtil(w_q)$. Then $n \le x$, $P_4(n) = P_2 > x^{1/10} > q$ and $\sigma(n) = (P_1^2 + P_1 + 1) (P_2^2+P_2+1) \equiv w_q \pmod q$. By the Siegel-Walfisz Theorem and partial summation, we obtain 
\begin{equation*}
\begin{split}
\sum_{\substack{n \le x: ~ P_4(n)>q\\ \sigma(n) \equiv w_q \pmod q}} 1 &\ge \sum_{(v_1, v_2) \in \Vqtil(w_q)} ~ \sum_{\substack{x^{1/10} < P_2 \le x^{1/6}\\P_2 \equiv v_2 \pmod q}}  ~ \sum_{\substack{x^{1/6} < P_1 \le x^{1/2}/P_2\\P_1 \equiv v_1 \pmod q}} 1\\ &\gg \sum_{(v_1, v_2) \in \Vqtil(w_q)} ~ \sum_{\substack{x^{1/10} < P_2 \le x^{1/6}\\P_2 \equiv v_2 \pmod q}} \frac{x^{1/2}/P_2}{\phi(q) \log x} \gg \frac{\#\Vqtil(w_q)}{\phi(q)^2} \cdot \frac{x^{1/2}}{\log x}.
\end{split}
\end{equation*}
An application of \eqref{eq:VqwqLB} now yields
$$\sum_{\substack{n \le x: ~ P_4(n)>q\\ \sigma(n) \equiv w_q \pmod q}} 1 ~ \gg ~ \frac{2^{\pi(Y)}}{\phi(q)} \cdot \frac{x^{1/2}}{\log x} \exp(-C_2 \sqrt{\log_2 x})$$
for some constant $C_2>0$. By Lemma \ref{lem:sigmaqcoprimEvenq}, the quantity on the right hand side above grows strictly faster than the expected main term $\phiqrec \sum_{\substack{n \le x\\ (\sigma(n), q)=1}} 1$ as soon as $2^{\pi(Y)} > (\log x)^{(1+\delta) \alphatil}$ for some fixed $\delta>0$, which in turn is equivalent to $\pi(Y) > (1+\delta) \alphatil \log_2 x/\log 2$. But now $\pi(Y) > Y/2\log Y$, while
\begin{equation}\label{eq:alphatilq_Countereg}
\begin{split}
\alphatilq = \prod_{\substack{5 \le \ell \le Y\\\ell \equiv 1 \pmod 3}} \left(1-\frac2{\ell-1}\right) = \exp\left(-2\sum_{\substack{5 \le \ell \le Y\\\ell \equiv 1 \pmod 3}} \frac1\ell + O(1) \right) < \frac{K_2}{\log Y} 
\end{split}
\end{equation}
for some absolute constant $K_2>0$, where we have used the prime number theorem in arithmetic progressions to estimate the sum on $\ell$. As such, the desired condition $\pi(Y) > (1+\delta) \alphatil \log_2 x/\log 2$ holds as soon as $Y> 2K_2 (1+\delta) \log_2 x/\log 2$, which is compatible with the only other condition $Y \ll \log_2 x$ needed on the parameter $Y$. Choosing $Y$ accordingly, we have therefore established that the condition $P_6(n)>q$ in Theorem \ref{thm:sigmaeven} cannot be weakened to $P_4(n)>q$ in the range of uniformity in $q$. Since the largest odd divisor of $n$ is a perfect square, it follows that the restriction $P_6(n)>q$ in Theorem \ref{thm:sigmaeven} is indeed optimal.

\section{Distribution of the sum-of-divisors function to squarefree even moduli: Proof of Theorem \ref{thm:sigmaevensqfree}}
As in the beginning of the previous section, we can show that the right hand sides of \eqref{eq:sigmaevensqfree} and \eqref{eq:SigmaEvenConvenient} are equal up to a negligible error: indeed, by previous arguments, this only needs the following analogue and consequence of \eqref{eq:P6nley_Bound}:
$$\sum_{\substack{n \le x: ~ P_4(n) \le y\\ P(n)>z; ~ p>y \implies p^4 \nmid n\\\gcd(\sigma(n), q)=1}} 1 \ll \frac{x^{1/2}}{(\log x)^{1-\alphatil \epsilon/2}}.$$
Hence to complete the proof of the theorem, it suffices to show the following analogue of \eqref{eq:SigmaEven_RemInconv} uniformly in squarefree even moduli $q \le (\log x)^K$ and in coprime residues $a$ mod $q$:
\begin{equation}\label{eq:SigmaEvenSqfree_RemInconv}
\sum_{\substack{n \le x: ~ q<P_4(n) \le y\\ P(n)>z; ~ p>y \implies p^4 \nmid n\\\sigma(n)\equiv a \pmod q}} 1 \ll \frac{x^{1/2}}{\phi(q) (\log x)^{1-\alphatil \epsilon/2}}.
\end{equation}
As before, we write the sum on the left hand side in the form $\SigmatilOne + \SigmatilTwo$, where $\SigmatilOne$ denotes the contribution of the $n$ which are divisible by the fourth power of a prime exceeding $q$. Then with $\Sigma_1$ as defined in the previous section, it follows by the intermediate bounds in \eqref{eq:SigmaOneBound} that 
$$\SigmatilOne \le \sum_{\substack{n \le x: ~ P_4(n) \le y\\ P(n)>z; ~ p>y \implies p^4 \nmid n\\\exists p>q: ~ p^4|n\\\sigma(n)\equiv a \pmod q}} 1 \le \sum_{\substack{n \le x: ~ P_6(n) \le y\\ P(n)>z; ~ p>y \implies p^4 \nmid n\\\exists p>q: ~ p^4|n\\\sigma(n)\equiv a \pmod q}} 1 \ll \frac{x^{1/2}}{q (\log x)^{1-\alphatil\epsilon/2}} $$
is absorbed in the right hand side of \eqref{eq:SigmaEvenSqfree_RemInconv}.

In order to estimate the sum $\Sigma_2$, we note that any $n$ counted in this sum has $P_4(n)>q$ but is not divisible by the fourth power of a prime exceeding $q$. Consequently, as in the previous section, we may write $n = 2^k r^2 (P_1 P_2)^2$ where $k \ge 0$, $P_2(r) \le y$, and $P_1$, $P_2$ are primes satisfying $P_1 = P(n)>z$, $q<P_2<P_1$, and $\sigma(n) = \sigma(2^k) \sigma(r^2) \prod_{j=1}^2 (P_j^2 + P_j + 1)$. Given $k$ and $r$, the congruence $\sigma(n) \equiv a \pmod q$ forces $(P_1, P_2)$ mod $q$ $\in \Vqtil(a\sigma(2^k r^2)^{-1})$, where $\Vqtil(w)$ is as defined in \eqref{eq:Vqtilwdef} for any coprime residue $w$ mod $q$. Hereafter, setting $\widetilde V_q \coloneqq \max\{\#\Vqtil(w): ~ w \in U_q\}$ and replicating the arguments leading to \eqref{eq:Sigma2_ReducntoOrthog} gives the following analogue of the latter bound:
\begin{equation}\label{eq:Sigma2_ReducntoLY}
\SigmatilTwo \ll \frac{\widetilde V_q}{\phi(q)^2} \cdot \frac{x^{1/2} (\log_2 x)^3}{(\log x)^{1-\alphatil\epsilon/4}} \expOlogtwoqOOne.   
\end{equation}
Now since $q$ is squarefree, we may write $\#\Vqtil(w) = \prodellq \#\Velltilw$ for any coprime residue $w$ mod $q$. Here $\#\Velltilw$ is no more than the number of $\F_\ell$-rational points of the polynomial $H(X, Y) \coloneqq (X^2+X+1)(Y^2+Y+1)-w$. We claim that this latter number is no more than $\phi(\ell)(1+O(\ell^{-1/2}))$. By a computation similar to \eqref{eq:V2e1}, this is true for $\ell=2$ (without the multiplicative error term), so we may consider the case $\ell \ge 5$. But in fact, an argument entirely analogous to that given for the polynomial $(X^2+3)(Y^2+3)-9$ in subsection \ref{subsec:P6Optimal}, shows that the polynomial $H$ is absolutely irreducible over $\F_\ell[X, Y]$. 
(Here again, it is important that $w \ne 0 \in \Fellbar$ and that since $\ell \ge 5$, the polynomial $Y^2+Y+1$ is separable over $\F_\ell$.) As such, \cite[Corollary 2(b)]{LY94} establishes our claim. 

As a consequence, we obtain $\#\Vqtil(w) \le \prod_{\ell \mid q} \phi(\ell) (1+O(\ell^{-1/2})) \le \phi(q) \exp(O(\sqrt{\log q}))$ uniformly in coprime residues $w$ mod $q$. The same bound thus continues to hold for $\widetilde{V}_q$, and \eqref{eq:Sigma2_ReducntoLY} shows that $\SigmatilTwo$ is also absorbed in the right hand side of \eqref{eq:SigmaEvenSqfree_RemInconv}, completing the proof of the theorem.
\subsection{Optimality in the restriction $P_4(n)>q$.} \label{subsec:P4Optimal}
The restriction $P_4(n)>q$ is crucial and optimal in the sense that weak equidistribution fails (in the range of uniformity in $q$ provided in the theorem) as soon as one enlarges the set of inputs $n$ to those having fewer prime factors exceeding $q$. Indeed, let $q \coloneqq 2 \prod_{5 \le \ell \le Y} \ell$ for some parameter $Y \ll \log_2 x$ to be chosen appropriately. For a prime $P \in (q, x^{1/2}]$, the congruence $\sigma(P^2) \equiv 3 \pmod q$ holds for $P$ lying in exactly $2^{\omega(q)-1}$ distinct coprime residue classes modulo $q$ (namely those lying in the residue classes $-2$ or $1$ modulo each of the odd prime divisors of $q$). As such, by the Siegel-Walfiz theorem, there are $\gg \frac{2^{\omega(q)}}{\phi(q)} \frac{x^{1/2}}{\log x}$ many integers $n \le x$ having $P_2(n)>q$ and $\sigma(n) \equiv 3 \pmod q$, coming only from the squares of the primes lying in the interval $(x^{1/4}, x^{1/2}]$. By \eqref{eq:sigman_evenq_estim}, the coprime residue class $3$ mod $q$ will be over-represented as soon as $2^{\omega(q)} > (\log{x})^{(1+\delta)\alphatil(q)}$ for a fixed $\delta>0$. By the same computation as in \eqref{eq:alphatilq_Countereg}, we have $\alphatil(q) \ll 1/\log Y$, whereas $\omega(q) \ge Y/2\log Y$. The inequality $2^{\omega(q)} > (\log{x})^{(1+\delta)\alphatil(q)}$ is thus ensured as soon as we choose $Y>K_1 \log_2 x$ for some appropriate constant $K_1>0$, a condition that is consistent with the only requirement $Y \ll \log_2 x$ on $Y$. This shows that the restriction $P_2(n)>q$ is inadequate to get weak equidistribution to moduli varying up to a fixed arbitrary power of $\log x$. Since $n$ is of the form $2^k m^2$ for some odd $m$, it follows that the restriction $P_4(n)>q$ in Theorem \ref{thm:sigmaeven} is optimal. 

\begin{rmk}
The above example may be compared with the one given in the discussion following the statement of Theorem 1.3 in \cite{PSR23}.
\end{rmk}
\section*{Acknowledgements}
This work was done in partial fulfillment of my PhD at the University of Georgia. As such, I would like to thank my advisor, Prof.\@ Paul Pollack, for useful discussions and valuable comments. I would also like to thank the Department of Mathematics at UGA for their support and hospitality. 

\providecommand{\bysame}{\leavevmode\hbox to3em{\hrulefill}\thinspace}
\providecommand{\MR}{\relax\ifhmode\unskip\space\fi MR }
\providecommand{\MRhref}[2]{%
  \href{http://www.ams.org/mathscinet-getitem?mr=#1}{#2}
}
\providecommand{\href}[2]{#2}

\end{document}